\definecolor{violet}{rgb}{0.0,0.2,0.7}
\definecolor{rouge2}{rgb}{0.8,0.0,0.2}
\theoremstyle{plain}
\newtheorem{thm}{Theorem}[section]
\newtheorem{lem}[thm]{Lemma}
\newtheorem{prop}[thm]{Proposition}
\newtheorem{defnprop}[thm]{Definition and Proposition}
\theoremstyle{plain}
\newtheorem{bigthm}{Theorem}
\newtheorem{bigprop}[bigthm]{Proposition}
\newenvironment{taggedbigset}[1]
{\taggedbigsetx}
{\endtaggedbigsetx}
\newenvironment{taggedvolcap}[1]
{\taggedvolcapx}
{\endtaggedvolcapx}
\newenvironment{taggedgeoconst}[1]
{\taggedgeoconstx}
{\endtaggedgeoconstx}
\newenvironment{taggedanaconst}[1]
{\taggedanaconstx}
{\endtaggedanaconstx}
\newenvironment{taggedbiggs}[1]
{\taggedbiggsx}
{\endtaggedbiggsx}
\newenvironment{taggedbigga}[1]
{\taggedbiggax}
{\endtaggedbiggax}
\newenvironment{taggedbigconj}[1]
{\taggedbigconjx}
{\endtaggedbigconjx}
\newtheorem*{bigrmk*}{Remark}
\theoremstyle{definition}
\newtheorem{defn}[thm]{Definition}
\newtheorem{nota}[thm]{Notation}
\newtheorem{asm}[thm]{Assumption}
\newtheorem*{claim*}{Claim}
\newtheorem*{ack*}{Acknowledgements}
\theoremstyle{remark}
\newtheorem{rmk}[thm]{Remark}
\newtheorem*{rmk*}{Remark}
\newtheorem*{ques*}{Question}
\newtheorem*{ans*}{Answer}
\numberwithin{equation}{section}
\newlist{steps}{enumerate}{1}
\setlist[steps, 1]{label = Step \arabic*:}
\DeclareFontFamily{U}{MnSymbolC}{}
\DeclareSymbolFont{MnSyC}{U}{MnSymbolC}{m}{n}
\DeclareFontShape{U}{MnSymbolC}{m}{n}{
	<-6>  MnSymbolC5
	<6-7>  MnSymbolC6
	<7-8>  MnSymbolC7
	<8-9>  MnSymbolC8
	<9-10> MnSymbolC9
	<10-12> MnSymbolC10
	<12->   MnSymbolC12}{}
\DeclareMathSymbol{\intprod}{\mathbin}{MnSyC}{'270}
\DeclareMathOperator{\Vol}{Vol}
\DeclareMathOperator{\supp}{supp}
\DeclareMathOperator{\dist}{dist}
\DeclareMathOperator{\PSH}{PSH}
\DeclareMathOperator{\MA}{MA}
\DeclareMathOperator{\CAP}{Cap}
\DeclareMathOperator{\CAPBT}{Cap_{BT}}
\DeclareMathOperator{\osc}{osc}
\DeclareMathOperator{\Exc}{Exc}
\def\1{\mathds{1}}
\def\e{\mathrm{e}}
\newcommand\Kol{{\mathrm{Kol}}}
\newcommand\Lap{{\mathrm{Lap}}}
\newcommand\oh{\frac{1}{2}}
\newcommand{\ii}{\mathrm{i}}
\newcommand{\loc}{\mathrm{loc}}
\newcommand{\wX}{{\widetilde{X}}}
\newcommand{\wCU}{{\widetilde{\mathcal{U}}}}
\newcommand{\wOm}{{\widetilde{\Omega}}}
\newcommand{\wvph}{{\widetilde{\varphi}}}
\newcommand{\wrho}{{\widetilde{\rho}}}
\newcommand{\wu}{{\widetilde{u}}}
\newcommand{\bBD}{\overline{\mathbb{D}}}
\newcommand\dt{\delta}
\newcommand\vep{\varepsilon}
\newcommand\vph{\varphi}
\newcommand\om{\omega}
\newcommand\ta{\theta}
\newcommand\gm{\gamma}
\newcommand\af{\alpha}
\newcommand\bt{\beta}
\newcommand\ld{\lambda}
\newcommand\zt{\zeta}
\newcommand\Om{\Omega}
\newcommand\BN{\mathbb{N}}
\newcommand\BQ{\mathbb{Q}}
\newcommand\BR{\mathbb{R}}
\newcommand\BC{\mathbb{C}}
\newcommand\BB{\mathbb{B}}
\newcommand\BD{\mathbb{D}}
\newcommand\CC{\mathcal{C}}
\newcommand\CD{\mathcal{D}}
\newcommand\CE{\mathcal{E}}
\newcommand\CF{\mathcal{F}}
\newcommand\CO{\mathcal{O}}
\newcommand\CR{\mathcal{R}}
\newcommand\CU{\mathcal{U}}
\newcommand\CV{\mathcal{V}}
\newcommand\CW{\mathcal{W}}
\newcommand\CX{\mathcal{X}}
\newcommand\CZ{\mathcal{Z}}
\newcommand\lt{\left}
\newcommand\rt{\right}
\newcommand\scr[1]{\mathscr{#1}}
\newcommand\ra{\rightarrow}
\newcommand\clb{\color{blue}}
\newcommand\pl{\partial}
\newcommand\db{\bar{\partial}}
\newcommand\ddb{\partial \bar{\partial}}
\newcommand\dd{\mathrm{d}}
\newcommand\dc{\mathrm{d}^\mathrm{c}}
\newcommand\ddc{\mathrm{d}\mathrm{d}^\mathrm{c}}
\newcommand\norm[1]{\left\lVert {#1} \right\rVert}
\newcommand\abs[1]{\left\lvert {#1} \right\rvert}
\newcommand\w{\wedge}
\newcommand\reg{\mathrm{reg}}
\newcommand\sing{\mathrm{sing}}
\newcommand\ov[1]{\overline{#1}}
\newcommand\set[2]{\left\{ {#1} \, \middle| \, {#2} \right\}}
\newcommand\res[2]{\left. {#1} \right|_{#2}} 
\newcommand\tde[1]{\tilde{#1}}
\newcommand{\RN}[1]{\textup{\uppercase\expandafter{\romannumeral#1}}}
\title{Families of singular Chern--Ricci flat metrics}
\author{Chung-Ming Pan}
\address{Institut de Math{\'e}matiques de Toulouse; UMR 5219, Universit{\'e} de Toulouse; CNRS, UPS, 118 route de Narbonne, F-31062 Toulouse Cedex 9, France \qquad\qquad\qquad\qquad\qquad\qquad\qquad}
\email{Chung\_Ming.Pan@math.univ-toulouse.fr}
\date{\today}
\begin{document}
\subjclass{14D06, 32Q25, 32U05, 32W20}
\keywords{Complex Monge--Amp\`ere equations, Families of complex spaces, Calabi--Yau manifolds, Singular Chern--Ricci flat metrics}
\maketitle
\begin{abstract}
We prove uniform a priori estimates for degenerate complex Monge--Amp\`ere equations on a family of hermitian varieties.
This generalizes a theorem of Di~Nezza--Guedj--Guenancia to hermitian contexts.
The main result can be applied to study the uniform boundedness of Chern--Ricci flat potentials in conifold transitions.
\end{abstract}

\tableofcontents

\section*{Introduction}
Let $\pi: \CX \ra \BD$ be a family of hermitian varieties (i.e. irreducible, reduced, complex analytic spaces).
Recently, Di Nezza--Guedj--Guenancia \cite{DGG2020} developed the first steps of a pluripotential theory in families of K\"ahler spaces and proved uniform bounds for K\"ahler--Einstein potentials in several cases.
The main purpose of this article is to generalize their theory and establish uniform estimates for complex Monge--Amp\`ere equations in families of hermitian varieties. 

\smallskip

The complex Monge--Amp\`ere equation is a powerful tool in complex geometry.
Many interesting geometric problems (e.g. the K\"ahler--Einstein equation) can be reduced to such type of equations.
Yau's celebrated resolution of the Calabi conjecture \cite{Yau_1978} and the resolution of Yau--Tian--Donaldson conjecture on the Fano manifolds by Chen--Donaldson--Sun~\cite{Chen_Donaldson_Sun_2015} are landmarks in smooth K\"ahler--Einstein problems.

\smallskip

In recent decades, following the works of Yau \cite{Yau_1978} and Tsuji \cite{Tsuji_1988}, degenerate complex Monge--Amp\`ere equations have been intensively studied. 
The breakthrough results of Ko{\l}odziej~\cite{Kolodziej_1998} and Eyssidieux--Guedj--Zeriahi~\cite{EGZ_2009} led to many important advances.
In \cite{EGZ_2009}, Yau's theorem has been generalized to compact K\"ahler varieties with log terminal singularities.
For varieties with ample canonical divisor and semi-log canonical singularities, Berman--Guenancia~\cite{Berman_Guenancia_2014} applied the variational approach developed in \cite{BBGZ_2013} to extend Aubin--Yau's result \cite{Aubin_1978, Yau_1978} on stable varieties.
On singular Fano varieties, Berman--Boucksom--Jonsson~\cite{BBJ_2021}, Li--Tian--Wang~\cite{Li_Tian_Wang_2021}, and Li~\cite{Li_2022} built a connection between singular K\"ahler--Einstein metrics and $K$-stability.

\smallskip

In hermitian contexts, the construction of hermitian Calabi--Yau metrics (i.e. Chern--Ricci flat metrics) is more difficult because the metrics are no longer closed. 
A Chern--Ricci flat hermitian metric on a complex manifold $X$ can be constructed by solving the complex Monge--Amp\`ere equation
\begin{equation*}
	(\om + \ddc \vph)^n =  c f \dd V_X, 
	\quad\text{and}\quad 
	\sup_X \vph = 0
\end{equation*}
where
\begin{itemize}
	\item $\om$ is a smooth $(1,1)$-form,
	\item $\dd V_X$ is a smooth volume form on $X$,
	\item $f \in L^p(X, \dd V_X)$ with $p>1$,
\end{itemize}
and the pair $(\vph, c) \in \lt(\PSH(X,\om) \cap L^\infty(X)\rt) \times \BR_{>0}$ is the unknown.
When $\om$ is a hermitian metric and $f$ is a smooth positive function, Tosatti--Weinkove~\cite{Tosatti_Weinkove_2010} first showed the existence and uniqueness of the pair $(\vph, c)$ with a smooth $\vph$ to the above equation. 
For $L^p$ densities $f$, Dinew--Ko{\l}odziej~\cite{Dinew_Kolodziej_2012} used pluripotential techniques to obtain uniform $L^\infty$-estimates.
The solvability was further established by Ko{\l}odziej--Nguyen~\cite{Kolodziej_Nguyen_2015} via a stability estimate.
Recently, Guedj--Lu~\cite{Guedj_Lu_3_2021} established uniform estimates and proved the existence of solution when the $(1,1)$-form $\om$ is merely big. 
As a consequence, they generalized Tosatti--Weinkove's theorem to hermitian $\BQ$-Calabi--Yau varieties.

\smallskip

It is important to study non-K\"ahler objects and how special hermitian metrics evolve when complex structures vary.
For example, to understand moduli spaces of Calabi--Yau manifolds,
a large class of non-K\"ahler Calabi--Yau threefolds was built via conifold transitions introduced by Clemens and Friedman~\cite{Clemens_1983, Friedman_1986}. 
Reid \cite{Reid_1987} speculated that all Calabi--Yau threefolds should form a connected family by conifold transitions.
Since then, these models attracted a lot of attention (cf. \cite{Friedman_1991, Tian_1992, Rossi_2006, Rong_Zhang_2011, Ruan_Zhang_2011, Chuan_2012, Fu_Li_Yau_2012, Collins_Picard_Yau_2021} and references therein).
This is our motivation to study families of singular Chern--Ricci flat metrics. 

\smallskip

\subsection*{Uniform $L^\infty$-estimate}
Before stating our results, we first fix some geometric setting for families $\pi: \CX \ra \BD$.
\begin{taggedbiggs}{(GS)}\label{geoset:irr_family}
	Let $\CX$ be an $(n+1)$-dimensional variety.
	Suppose that $\pi: \CX \ra \BD$ is a proper surjective holomorphic map with connected fibres $X_t := \pi^{-1}(t)$ which are $n$-dimensional varieties. 
	Let $\om$ be a hermitian metric on $\CX$ in the sense of Definition~\ref{defn:metrics_and_forms}.
	For every $t \in \BD$, we define a hermitian metric $\om_t$ on the fibre $X_t$ by restriction (i.e. $\om_t = \res{\om}{X_t}$).
\end{taggedbiggs}

\smallskip

In the sequel, we always assume that families of hermitian varieties $\pi: (\CX,\om) \ra \BD$ satisfy the geometric setting \ref{geoset:irr_family}.
We also impose a sup-$L^1$ comparison (see Conjecture \ref{conj:SL}).
Under such conditions, we establish a uniform bound for solutions to complex Monge--Amp\`ere equations in families of hermitian varieties: 

\begin{bigthm}\label{bigthm:uniform_a_priori_estimate}
	Let $\pi: (\CX,\om) \ra \BD$ be a family of compact, locally irreducible, hermitian varieties and $0\leq f_t \in L^p(X_t,\om_t^n)$ be a family of densities.
	Assume that $\pi: (\CX,\om) \ra \BD$ fits into Conjecture \ref{conj:SL} and $(f_t)_{t \in \BD}$ satisfies the following integral bounds: there exist constants $c_f, C_f > 0$ such that for all $t \in \BD$,
	\begin{equation}\tag{IB}\label{integral_bound:IB}
		c_f \leq \int_{X_t} f_t^{\frac{1}{n}} \om_t^n
		\quad \text{and} \quad
		\norm{f_t}_{L^p(X_t, \om_t^n)} \leq C_f.
	\end{equation}
	For each $t \in \BD$, let the pair $(\vph_t,c_t) \in \lt(\PSH(X_t, \om_t) \cap L^\infty(X_t)\rt) \times \BR_{>0}$ be a solution to the complex Monge--Amp\`ere equation:
	\begin{equation*}
		(\om_t + \ddc_t \vph_t)^n = c_t f_t \om_t^n,
		\quad\text{and}\quad
		\sup_{X_t} \vph_t = 0.
	\end{equation*}
	Then there exists a constant $C_{\MA} = C_{\MA}(c_f,C_f,C_{SL}, \CX, \om)$ such that for all $t \in \BD_{1/2}$,
	\[
	c_t + c_t^{-1} + \norm{\vph_t}_{L^\infty} \leq C_{\MA}.
	\]
\end{bigthm}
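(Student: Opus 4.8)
\emph{Strategy and the normalizing constant.}
The goal splits into two uniform bounds over the relatively compact base $\overline{\BD}_{1/2}\Subset\BD$: first $c_t+c_t^{-1}\le C$, and then, after rewriting the equation as $(\om_t+\ddc_t\vph_t)^n=g_t\,\om_t^n$ with $g_t:=c_tf_t$, the estimate $\|\vph_t\|_{L^\infty}\le C$ (which, since $\sup_{X_t}\vph_t=0$, is the same as $\osc_{X_t}\vph_t\le C$). Here $C$ always denotes a constant depending only on $n,p,c_f,C_f,C_{SL}$ and the geometry of $(\CX,\om)$ over $\overline{\BD}_{1/2}$. For the first bound, integrate the equation: $c_t=\bigl(\int_{X_t}(\om_t+\ddc_t\vph_t)^n\bigr)\big/\bigl(\int_{X_t}f_t\,\om_t^n\bigr)$. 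The numerator is comparable to $\int_{X_t}\om_t^n$ by the hermitian comparison of Monge--Amp\`ere masses, with a constant that is $\vph_t$-independent and, by the volume--capacity comparison applied in the family, uniform in $t$; and $\int_{X_t}\om_t^n$ is bounded above and below on $\overline{\BD}_{1/2}$ by properness of $\pi$ and compactness of $\overline{\BD}_{1/2}$. For the denominator, H\"older gives $\int_{X_t}f_t\,\om_t^n\le\|f_t\|_{L^p}\bigl(\int_{X_t}\om_t^n\bigr)^{1-1/p}$, while the lower bound $c_f\le\int_{X_t}f_t^{1/n}\om_t^n$ together with $\int_{X_t}f_t^{1/n}\om_t^n\le\bigl(\int_{X_t}f_t\,\om_t^n\bigr)^{1/n}\bigl(\int_{X_t}\om_t^n\bigr)^{1-1/n}$ forces $\int_{X_t}f_t\,\om_t^n\ge c_f^{\,n}\bigl(\int_{X_t}\om_t^n\bigr)^{1-n}$. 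Combined with the hypothesis (IB) this yields $c_t+c_t^{-1}\le C$, hence also $\|g_t\|_{L^p(X_t,\om_t^n)}\le C$ uniformly in $t$.

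\emph{Uniform Skoda integrability.}
The key analytic input extracted from Conjecture~\ref{conj:SL} is: there exist $\alpha>0$ and $A>0$ depending only on $(\CX,\om,C_{SL})$ such that
\[
  \int_{X_t} e^{-\alpha u}\,\om_t^n \;\le\; A
  \qquad\text{for all } t\in\overline{\BD}_{1/2} \text{ and all } u\in\PSH(X_t,\om_t)\text{ with }\sup_{X_t}u=0 .
\]
To prove it, cover a neighbourhood of $\pi^{-1}(\overline{\BD}_{1/2})$ in $\CX$ by finitely many coordinate charts (properness plus compactness of $\overline{\BD}_{1/2}$), in each of which the condition $\om_t+\ddc(\cdot)\ge0$ translates, after subtracting one fixed smooth function $\rho$ with $\ddc\rho\ge\om$ on the chart, into ordinary plurisubharmonicity; the uniform $L^1$-control on $u$ furnished by Conjecture~\ref{conj:SL} then bounds uniformly both the $L^1$-norm and the supremum of these local psh representatives, and for each chart a normal-families argument together with the classical local Skoda--Zeriahi integrability estimate yields constants $\alpha,A$ uniform in $t$; taking the worst over the finite cover gives the claim. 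The singular loci $\CX_{\sing}$ and $(X_t)_{\sing}$ are harmless, being $\om_t^n$-null, so one works on the regular parts throughout.

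\emph{The uniform $L^\infty$-bound.}
With $\|g_t\|_{L^p}\le C$ and the uniform Skoda estimate in hand, H\"older followed by Skoda produces a uniform domination of the Monge--Amp\`ere densities by the Bedford--Taylor capacity, $\int_E g_t\,\om_t^n\le F\bigl(\CAP_{\om_t}(E)\bigr)$ with an admissible weight $F$ independent of $t$. Feeding this into Ko\l odziej's capacity iteration, in its hermitian form (Dinew--Ko\l odziej, Ko\l odziej--Nguyen, Guedj--Lu), where the non-closedness of $\om$ contributes only error terms controlled by the sup-norms over $\pi^{-1}(\overline{\BD}_{1/2})$ of $\ddc\om$ and $\dd\om\wedge\dc\om$, and using the modified hermitian comparison principle, yields $\osc_{X_t}\vph_t\le C$ uniformly; since $\sup_{X_t}\vph_t=0$ this is $\|\vph_t\|_{L^\infty}\le C$, completing the proof. (Alternatively one may replace this step by the Guo--Phong--Tong PDE comparison: on each fibre solve the auxiliary Monge--Amp\`ere equation with density proportional to a truncation of $-\vph_t$ times $g_t$ — solvable by the hermitian existence theory, cf. Guedj--Lu — and compare; this returns the same bound directly from $n$, $p$, $\|g_t\|_{L^p}$, $\int_{X_t}\om_t^n$ and the uniform mass comparison.)

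\emph{The main difficulty.}
No single fibrewise estimate above is new — they are classical in the hermitian setting — and the real content is their \emph{uniformity in $t$}. Everything hinges on extracting chart-independent constants from Conjecture~\ref{conj:SL} through a finite cover of a neighbourhood of $\pi^{-1}(\overline{\BD}_{1/2})$ adapted to a possibly singular family, and on checking that the singularities of $\CX$ and of the fibres do not spoil the pluripotential estimates (volume--capacity comparison, uniform Skoda). Once these uniform estimates are available, the bounds on $c_t$ and on $\|\vph_t\|_{L^\infty}$ are routine transplantations of known arguments with $t$-independent constants.
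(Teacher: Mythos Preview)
Your architecture is broadly right and runs parallel to the paper (uniform Skoda from Conjecture~\ref{conj:SL}, then a capacity/energy method for $\|\vph_t\|_{L^\infty}$), but the opening move --- bounding $c_t+c_t^{-1}$ via a two-sided comparison $\int_{X_t}(\om_t+\ddc\vph_t)^n\asymp V_t$ --- is a genuine gap. In the hermitian setting the Monge--Amp\`ere mass is not cohomological, and for $n\geq 3$ no upper bound on $\int(\om+\ddc\vph)^n$ depending only on $\om$ and $\|\vph\|_{L^1}$ (hence only on $C_{SL}$) is available: after one integration by parts the term $\int\om^{n-2}\wedge(\ddc\vph)^2=\int\vph\,\ddc\om^{n-2}\wedge\ddc\vph$ still carries a factor $\ddc\vph$, and further manipulation produces gradient energies $\int i\partial\vph\wedge\bar\partial\vph\wedge\om^{n-2}$ that one cannot absorb without already knowing $\|\vph\|_{L^\infty}$, so the argument is circular. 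Attributing such a mass comparison to the ``volume--capacity comparison'' is a confusion of statements: $\Vol(K)\leq C\,\CAP^k(K)$ says nothing about total Monge--Amp\`ere masses. The lower mass bound (non-collapsing, Guedj--Lu) does exist independently of $\vph$, but its $t$-uniformity is not automatic either.

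The paper sidesteps the mass question entirely. For the upper bound on $c_t$ it uses the arithmetic--geometric mean: from $(\om_t+\ddc\vph_t)^n=c_tf_t\om_t^n$ one gets
\[
    c_t^{1/n}\int_{X_t} f_t^{1/n}\om_t^n
    \;\leq\; \frac{1}{n}\int_{X_t}(\om_t+\ddc\vph_t)\wedge\om_t^{n-1},
\]
and the right-hand side involves only \emph{one} factor of $\ddc\vph_t$, so a single integration by parts bounds it by $V_t(1+B'C_{SL})$ with $B'$ a uniform bound for $\ddc\om^{n-1}$; together with $\int f_t^{1/n}\om_t^n\geq c_f$ this gives $c_t\leq C$. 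The lower bound on $c_t$ and the bound on $\|\vph_t\|_{L^\infty}$ are then obtained \emph{simultaneously} by the subsolution method of Guedj--Lu rather than by direct Ko\l odziej iteration: one solves local Dirichlet problems $(\ddc u_j)^n=g\,\om_t^n$ on a fixed pseudoconvex double cover of $\CX$ (the family volume--capacity comparison enters here, to make the local Ko\l odziej constant $C_{\Kol,p}$ uniform in $t$), glues them into a bounded $\om_t$-psh $\psi_t$ with $(\om_t+\ddc\psi_t)^n\geq m\,g\,\om_t^n$ and $\osc\psi_t\leq M$, and applies the domination principle with the twisted density $g=e^{-\vep\vph_t}f_t/\|e^{-\vep\vph_t}f_t\|_{L^{p'}}$ (kept in $L^{p'}$ for small $\vep$ by uniform Skoda). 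This yields $c_t\geq m/(A_\alpha^{\ast}C_f)$ and $\vph_t\geq \psi_t-C$ in one stroke. Your proposed endgame (hermitian Ko\l odziej iteration, or Guo--Phong--Tong) would also finish once $c_t$ is controlled, but you must first replace the mass-comparison step by an argument such as the AM--GM trick above.
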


\subsection*{Sup-$L^1$ comparison conjecture}
In pluripotential theory, there is a conjecture proposed by Di~Nezza--Guedj--Guenancia \cite[Conjecture 3.1]{DGG2020} which says that if $X_0$ is irreducible then one has the following sup-$L^1$ comparison: 

\begin{taggedbigconj}{(SL)}\label{conj:SL}
	There exists a constant $C_{SL}>0$ such that: the inequality 
	\[
	\forall \vph_t \in \PSH(X_t, \om_t),\quad
	\sup_{X_t} \vph_t - C_{SL}
	\leq \frac{1}{V_t} \int_{X_t} \vph_t \om_t^n
	\leq \sup_{X_t} \vph_t
	\]
	holds for all $t \in \BD_{1/2}$, where $V_t$ is the volume of $X_t$ with respect to the hermitian metric $\om_t$. 
\end{taggedbigconj}

In the K\"ahler setting, Di~Nezza--Guedj--Guenancia \cite[Proposition 3.3]{DGG2020} established Conjecture~\ref{conj:SL} in the following cases:
\begin{enumerate}
	\item The map $\pi$ is locally trivial or projective;
	\item The fibres $X_t$ are smooth for $t \neq 0$;
	\item The fibres $X_t$ have only isolated singularities for every $t \in \BD$.
\end{enumerate}

One should notice that the irreducibility of all the fibres is a necessary condition for the left hand side inequality in Conjecture \ref{conj:SL} (cf. \cite[Example 3.5]{DGG2020}) and it is the reason why we always assume that the fibres are irreducible in the geometric setting \ref{geoset:irr_family}.

\smallskip

To establish Conjecture \ref{conj:SL} in hermitian setting, we impose the following assumptions:
\begin{taggedbigga}{(GA)}\label{geoasm:L1_assumption}
	Suppose that $\pi: \CX \ra \BD$ is a family of hermitian varieties which satisfies the geometric setting \ref{geoset:irr_family} and one of the following conditions:
	\begin{enumerate}
		\item $\pi$ is locally trivial;
		\item $\pi: \CX \ra \BD$ is a smoothing of $X_0$ and $X_0$ has only isolated singularities. 
	\end{enumerate}
\end{taggedbigga}

Note that both conditions are naturally exclusive unless $X_0$ is smooth.
Also, if $\CX$ is smooth and $\pi$ is a submersion, then (i) holds.
Thus, the geometric assumption \ref{geoasm:L1_assumption} includes families of smooth hermitian manifolds.
Then we prove that, under the geometric assumption~\ref{geoasm:L1_assumption}, Conjecture~\ref{conj:SL} is fulfilled:
\begin{bigprop}\label{bigprop:uniform_L1_isolated_singularities}
	If $\pi: (\CX,\om) \ra \BD$ is a family of hermitian varieties satisfying the geometric assumption \ref{geoasm:L1_assumption}, then there exists a uniform constant $C_{SL}$ such that Conjecture~\ref{conj:SL} holds. 
\end{bigprop}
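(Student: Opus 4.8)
The right-hand inequality in Conjecture~\ref{conj:SL} is immediate from $\vph_t\le\sup_{X_t}\vph_t$, so only the left one needs work, and, since both sides are invariant under adding a constant, we may assume $\sup_{X_t}\vph_t=0$. It then suffices to produce a constant $C$ with $\int_{X_t}(-\vph_t)\om_t^n\le C$ for all $t\in\BD_{1/2}$ and all such $\vph_t$: indeed, restricting $\om_t^n$ to a fixed holomorphic chart of $\CX$ adapted to $\pi$ (a polydisc $\times$ disc on which $\pi$ is the second projection, centered at a regular point of $X_0$ away from the bad locus, and finitely many more such over a cover of $\overline{\BD_{1/2}}$) shows $V_t\ge v>0$ uniformly, whence $V_t^{-1}\int_{X_t}\vph_t\om_t^n\ge -C/v=:-C_{SL}$.

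In case \ref{geoasm:L1_assumption}(i), local triviality makes $\pi^{-1}(\BD_{1/2})$ a fibre bundle with fibre $X_0$, and compactness of $\overline{\BD_{1/2}}$ gives that, under the bundle identification, the metrics $\om_t$ are uniformly equivalent to a fixed hermitian metric $\om_0$ on $X_0$: $A^{-1}\om_0\le\om_t\le A\om_0$. Hence $A^{-1}\vph_t\in\PSH(X_0,\om_0)$ with the same supremum, and $\om_t^n\le A^nA^{-n}\cdots$, so the desired bound follows from the sup-$L^1$ comparison on the single compact hermitian variety $(X_0,\om_0)$ — which is classical, e.g. by the $L^1$-compactness of $\{u\in\PSH(X_0,\om_0):\sup_{X_0}u=0\}$, cf. \cite{Guedj_Lu_3_2021}.

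For case \ref{geoasm:L1_assumption}(ii), following \cite{DGG2020}, let $\Sigma=\{p_1,\dots,p_k\}$ consist of the singular points of $X_0$ together with the points where $\pi$ fails to be a submersion (all over $0$); away from $\Sigma$ the map $\pi$ is a holomorphic submersion between smooth spaces, so it is a coordinate projection in suitable holomorphic charts. Fix nested ambient ball neighbourhoods $\CB_j'\Subset\CB_j''\Subset\CB_j$ of $p_j$, put $\CU'=\bigcup_j\CB_j'$, and split $\int_{X_t}(-\vph_t)\om_t^n=\int_{X_t\setminus\CU'}(-\vph_t)\om_t^n+\sum_j\int_{X_t\cap\CB_j'}(-\vph_t)\om_t^n$. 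On a small enough $\CB_j$ the restriction of $\om$ satisfies $\om_t\le\ddc(A_j\abs{z}^2)$ in the ambient coordinates, so $\psi_{j,t}:=\vph_t+A_j\abs{z}^2$ restricts to a genuine plurisubharmonic function on the analytic set $X_t\cap\CB_j$, bounded above since $\vph_t\le0$. For the good region, in each $\pi$-adapted polydisc chart one has $\om_t\le A\beta$ with $A$ uniform (the chart and $\om$ being fixed), hence a uniform sub-mean-value inequality for $\om_t$-psh functions; since $X_t\setminus\CU'$ is connected and is covered by a uniformly bounded number of such charts, one propagates an $L^1$-bound along a chain of charts, each step multiplying the constant by a fixed factor, starting from a point where $\vph_t$ is bounded below: namely the global maximum point of $\vph_t$ when it lies in $X_t\setminus\CU'$, and otherwise a point of $(\partial\CB_j'')\cap X_t\subseteq X_t\setminus\CU'$ obtained by applying the maximum principle to the plurisubharmonic $\psi_{j,t}$ on the connected set $X_t\cap\CB_j''$, where $\vph_t\ge-A_j\cdot\mathrm{rad}(\CB_j)^2$. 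This yields $\int_{X_t\setminus\CU'}(-\vph_t)\om_t^n\le C$. For the bad region near $p_j$, the good-region estimate controls $\int(-\psi_{j,t})$ on the annulus $(\CB_j\setminus\overline{\CB_j'})\cap X_t$, whose $\om_t$-volume is bounded below uniformly (again by a fixed submersion chart); by Chebyshev there is a point of the annulus where $\psi_{j,t}$ is bounded below, and chaining inward through a uniformly bounded number of small balls covering $X_t\cap\CB_j'$ — using that pure $n$-dimensional analytic subsets of a ball obey $c\,r^{2n}\le\Vol(X_t\cap B(z,r))\le C\,r^{2n}$ for $z\in X_t$ uniformly (Lelong's monotonicity and boundedness of mass in the flat family), together with Lelong--Jensen and Wirtinger to get the sub-mean-value inequality — gives $\int_{X_t\cap\CB_j'}(-\vph_t)\om_t^n\le C$. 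Summing over $j$ proves the bound for $t$ near $0$; for $t$ in a fixed annulus $\{r\le\abs{t}\le 1/2\}$ the total space is smooth with $\pi$ a submersion, so the good-region argument alone applies, and $t=0$ is covered by case (i).

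The only genuinely delicate point is the uniformity of the local pluripotential estimates near the $p_j$ as $t\to0$: both the complex structure of $X_t$ and the way $X_t\cap\CB_j$ sits inside the ambient ball degenerate, so one must verify that the sub-mean-value constants, the local volume bounds, and the lengths of the chaining chains stay bounded independently of $t$. This is exactly where the isolated-singularities hypothesis in \ref{geoasm:L1_assumption}(ii) is essential — it confines the degeneration to the finitely many ambient balls $\CB_j$, where Lelong's uniform local volume estimates and the connectedness of the local fibres $X_t\cap\CB_j''$ apply; the bookkeeping of the (uniform but possibly large) chaining constants is routine but must be carried out carefully.
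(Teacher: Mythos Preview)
Your treatment of case~(i) is too quick: ``locally trivial'' in the paper's sense means local in~$\CX$, not over~$\BD$, so the local biholomorphisms $h_i:(X_0\cap\CU_i)\times\BD\to\CU_i$ do not glue to a global identification $X_t\simeq X_0$, and you cannot pull $\vph_t$ back to a single $\PSH(X_0,\om_0)$. The correct argument (as in \cite[Section~3.2]{DGG2020}) works chart by chart via Hartogs' lemma and then propagates the $L^1$-bound through a connectedness/chaining argument on $X_0$ --- essentially the same mechanism you describe for the good region in~(ii).

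The substantive divergence is in the bad region of case~(ii). The paper does \emph{not} chain inward locally. It builds a single smooth function $\rho=\sum_i\chi_i\rho_i$ on $\CX$ with $\om\le C_\rho\ddc\rho$ on a neighbourhood $\CW\supset X_0^\sing$ and $-C_\rho\om\le\ddc\rho\le C_\rho\om$ globally, writes $\om_t^n=C_\rho^n(\ddc\rho)^n+\Om_t$ with $\Om_t\le 0$ on $\CW$, and controls $\int_{X_t}(-\vph_t)(\ddc\rho)^n$ by a single global integration by parts. That step produces the quantity $\int_{X_t}(\om_t+\ddc_t\vph_t)\wedge\om_t^{n-1}$, which in the K\"ahler case of \cite{DGG2020} equals $V_t$ by Stokes, but in the hermitian case requires the uniform Gauduchon-factor bound of \cite[Theorem~A]{Pan_2021} (Lemma~\ref{lem:Laplacian_estimate_smoothing} here). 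This Laplacian estimate is the one genuinely new hermitian ingredient in the proof, and your argument bypasses it entirely.

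Your local chaining alternative for the bad region is interesting and, if it works, would make the result independent of \cite{Pan_2021}; but as written it has gaps. The sub-mean-value inequality you invoke is for psh functions integrated over the degenerating slices $X_t\cap B(z,r)$ with \emph{ambient} balls and a constant uniform in $t$; this is not the Riemannian sub-mean-value inequality (ambient balls are not geodesic balls of $X_t$, and the intrinsic Ricci curvature of $X_t$ is not controlled as $t\to0$), and ``Lelong--Jensen and Wirtinger'' is not a proof --- the clean route goes through a Poisson--Szeg\H{o} inequality with pluricomplex Green functions as in Section~\ref{sec:unif_Skoda}, which is substantial. Moreover, turning a maximal packing of $X_t\cap\CB_j'$ into a chain reaching every point requires each component of $X_t\cap\CB_j'$ to meet the annulus in a set of uniformly positive volume and the chain length within each component to be uniformly bounded; you assert the ``bookkeeping is routine'' but do not carry it out, and this is exactly where the degeneration of $X_t$ near $p_j$ makes uniformity delicate. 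The paper's global integration-by-parts avoids all of this at the cost of invoking \cite{Pan_2021}.
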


\subsection*{Families of Calabi--Yau varieties}
A Calabi--Yau variety $X$ is a normal variety with canonical singularities and trivial canonical bundle $K_X$. 
Reid \cite{Reid_1987} has conjectured that all Calabi--Yau threefolds should form a connected family, provided one allows conifold transitions.
Roughly speaking, the construction of a conifold transition goes as follows: contracting a collection of disjoint $(-1,-1)$-curves from a K\"ahler Calabi--Yau threefold $X$ to get a singular Calabi--Yau variety $X_0$ and then smoothing singularities of $X_0$, one obtains a family of Calabi--Yau threefolds $(X_t)_{t \neq 0}$ which are non-K\"ahler for a general $t$.

\smallskip

In the model of conifold transitions, 
the central fibre $X_0$ has only ordinary double point singularities which are canonical. 
Based on these geometric models, it is thus legitimate to study a smoothing family of Calabi--Yau varieties where the central fibre has only isolated singularities. 

\smallskip

Now, we consider a reasonable "good" family of Calabi--Yau varieties and ask how the bound on the Chern-Ricci potentials vary in families.
Assume that $\CX$ is a normal variety, $K_\CX$ is trivial and $\pi: \CX \ra \BD$ is a smoothing.
Moreover, we suppose that $X_0$ has only isolated canonical singularities.
One can find a trivializing section $\Om$ of $K_{\CX/\BD}$. 
The restriction on each fibre $\Om_t := \res{\Om}{X_t}$ defines a trivialization of $K_{X_t}$.
Following from \cite[Theorem E]{Guedj_Lu_3_2021}, for each $t$, there is a bounded solution to the corresponding complex Monge--Amp\`ere equation of canonical density.
Then we show a uniform estimate in families:

\begin{bigthm}\label{bigthm:CY_family}
	Suppose that $\CX$ is normal, $K_{\CX}$ is trivial, and $\pi: \CX \ra \BD$ is a smoothing of a variety $X_0$ whose singularities are canonical and isolated. 
	For each $t \in \BD$, let $(\vph_t,c_t) \in \lt( \PSH(X_t, \om_t) \cap L^\infty(X_t) \rt) \times \BR_{>0}$ be a pair solving the complex Monge--Amp\`ere equation
	\[
	(\om_t + \ddc_t \vph_t)^n = c_t \Om_t \w \overline{\Om_t}
	\quad\text{and}\quad
	\sup_{X_t} \vph_t = 0.
	\]
	Then there is a uniform constant $C_{\MA}$ such that for all $t \in \BD_{1/2}$
	\[
	c_t + c_t^{-1} + \norm{\vph_t}_{L^\infty} \leq C_{\MA}.
	\]
\end{bigthm}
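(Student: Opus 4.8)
The plan is to realise Theorem~\ref{bigthm:CY_family} as an instance of the a priori estimate of Theorem~\ref{bigthm:uniform_a_priori_estimate}. Write the right-hand side of the Monge--Amp\`ere equation as $c_t f_t\,\om_t^n$, where $f_t:=(\Om_t\w\overline{\Om_t})/\om_t^n$ is the ratio of the two positive volume forms on $X_t$ (we write $\Om_t\w\overline{\Om_t}$ for the induced positive $(n,n)$-form; the sign convention is immaterial, being absorbable into $c_t$). Two things must be checked: that $\pi:(\CX,\om)\ra\BD$ fits into Conjecture~\ref{conj:SL}, and that $(f_t)_t$ obeys the integral bounds \eqref{integral_bound:IB} with $t$-independent constants. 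The first is immediate: by hypothesis $\pi$ is a smoothing of $X_0$ and $X_0$ has isolated singularities, so the geometric assumption~\ref{geoasm:L1_assumption}(ii) holds and Proposition~\ref{bigprop:uniform_L1_isolated_singularities} produces a uniform $C_{SL}$. (Fibrewise, $f_t\in L^{p}(X_t,\om_t^n)$ for some $p>1$ by \cite[Theorem~E]{Guedj_Lu_3_2021}, which is why the solutions $(\vph_t,c_t)$ exist; here only the uniform bound is at stake.)

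For the lower bound in \eqref{integral_bound:IB}, let $\Sm\subset X_0$ be the (finite) singular set and fix a small ball $B\subset\CX$ around $\Sm$. On the compact set $\overline{\CX\setminus B}$ the map $\pi$ is a submersion, $\om$ is a hermitian metric, and $\Om$ is a nowhere-vanishing relative holomorphic volume form; hence there is $\kp>0$, independent of $t$, with $\kp^{-1}\om_t^n\le\Om_t\w\overline{\Om_t}\le\kp\,\om_t^n$ on $X_t\setminus B$. Therefore $\int_{X_t}f_t^{1/n}\om_t^n\ge\kp^{-1/n}\int_{X_t\setminus B}\om_t^n=\kp^{-1/n}\bigl(V_t-\int_{X_t\cap B}\om_t^n\bigr)$. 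Now $V_t$ is bounded below by a positive constant (uniform two-sided control of the fibre volumes belongs to the standing setting and already underlies Proposition~\ref{bigprop:uniform_L1_isolated_singularities}), while $\int_{X_t\cap B}\om_t^n$ is uniformly small once $B$ is shrunk, the fibres being analytic subsets of uniformly bounded local volume near $\Sm$. This gives the required $c_f>0$.

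The real content is the uniform $L^p$ bound. On $X_t\setminus B$ the comparability above gives $\int_{X_t\setminus B}f_t^{p}\om_t^n\le\kp^{p}V_t\le C$ for every $p\ge1$. Near $\Sm$ one exploits that the singularities of $X_0$ are canonical. Pick a resolution $\mu:\widehat\CV\ra\CV$ of a neighbourhood $\CV\supset\Sm$ in $\CX$, an isomorphism over $\CV\setminus\Sm$, with $\widehat{X_0}:=\mu^{-1}(X_0)$ a log resolution of $X_0$; because $X_0$ is canonical, $\mu^\ast\Om$, regarded as a holomorphic section of $K_{\widehat\CV/\BD}$, vanishes along an effective divisor supported on the $\mu$-exceptional locus, so $\mu^\ast(\Om\w\overline{\Om})\le C\,d\widehat V$ for a fixed smooth volume form $d\widehat V$ on $\widehat\CV$. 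Writing, on $\widehat{X_t}:=\mu^{-1}(X_t)$, $\mu^\ast(f_t^{p}\om_t^n)=\bigl(\mu^\ast(\Om_t\w\overline{\Om_t})\bigr)^{p}\big/\bigl((\mu^\ast\om_t)^n\bigr)^{p-1}$ and comparing both terms to $d\widehat V$, the integrand is $\le C\,b_t^{1-p}\,d\widehat V$ where $b_t\ge0$ is the density of $(\mu^\ast\om_t)^n$; along each $\mu$-exceptional divisor $b_t$ vanishes to an order $m$ determined by the resolution alone, independently of $t$, so $\int_{\widehat{X_t}\cap\widehat\CV}b_t^{1-p}\,d\widehat V<\infty$ whenever $1<p<1+\tfrac1m$, with a bound uniform in $t$ because all remaining data are restrictions of fixed objects on $\widehat\CV$ and depend continuously on $t$. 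Fixing such a $p$ gives $\sup_t\norm{f_t}_{L^p(X_t,\om_t^n)}\le C_f<\infty$. I expect this --- making precise the local model near $\Sm$ (which is $t$-independent) and controlling $(\mu^\ast\om_t)^n$ from below uniformly --- to be the main obstacle; it works precisely because the singularities are isolated, so a single resolution of $\CV$ serves every fibre. It is the family version of the classical $L^p$-estimate of Eyssidieux--Guedj--Zeriahi \cite{EGZ_2009}.

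With \eqref{integral_bound:IB} and Conjecture~\ref{conj:SL} verified, Theorem~\ref{bigthm:uniform_a_priori_estimate} applies to the equations $(\om_t+\ddc_t\vph_t)^n=c_t f_t\,\om_t^n$, $\sup_{X_t}\vph_t=0$, and yields $C_{\MA}=C_{\MA}(c_f,C_f,C_{SL},\CX,\om)$ with $c_t+c_t^{-1}+\norm{\vph_t}_{L^\infty}\le C_{\MA}$ for all $t\in\BD_{1/2}$, which is the claim.
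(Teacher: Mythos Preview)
Your proposal is correct and follows essentially the same route as the paper: verify Conjecture~\ref{conj:SL} via Proposition~\ref{bigprop:uniform_L1_isolated_singularities}, verify the integral bounds~\eqref{integral_bound:IB} for the canonical densities $f_t=(\Om_t\w\overline{\Om_t})/\om_t^n$, and then invoke Theorem~\ref{bigthm:uniform_a_priori_estimate}. The paper does not argue the uniform $L^p$ bound directly but packages it as Proposition~\ref{prop:Lp_estimate_of_CY_densities}, which in turn quotes \cite[Lemma~4.4]{DGG2020}; your resolution sketch is morally that lemma, though note a wrinkle in your description: since $\mu$ is an isomorphism over $\CV\setminus\Sm$ and $X_t\cap\Sm=\emptyset$ for $t\neq 0$, the proper transform $\widehat{X_t}$ does not meet the exceptional divisor at all for $t\neq 0$, so the ``order of vanishing $m$ of $b_t$ along exceptional divisors'' is not quite the right picture---the uniformity in $t$ has to come instead from working on the total space $\widehat{\CV}$ (using that $\CX$ itself has canonical singularities by inversion of adjunction) and slicing.
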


\subsection*{Structure of the article}
The paper is organized as follows:
\begin{itemize}
	\item In Section \ref{sec:preliminaries}, we recall basic notions in pluripotential theory and singular spaces.
	\item Section \ref{sec:chasing_the_constants} is a recap on methods to obtain $L^\infty$-estimates in local and global cases.
	\item In Section~\ref{sec:unif_Skoda}, we study the local and global uniform Skoda estimates.
	\item In Section \ref{sec:vol_cap}, we deal with the volume-capacity comparison stated in Section \ref{sec:chasing_the_constants}.
	\item In Section \ref{sec:L1_estimate}, we establish Conjecture~\ref{conj:SL} under the geometric assumption~\ref{geoasm:L1_assumption}. 
	\item In Section \ref{sec:geometric_applications}, we focus on families of Calabi--Yau varieties and show Theorem~\ref{bigthm:CY_family}.
\end{itemize}

\begin{ack*}
The author is grateful to his thesis advisors Vincent Guedj and Henri Guenancia for their constant supports, suggestions, and encouragements.
The author thanks Ahmed Zeriahi for many helpful discussions.
The author would like to thank the anonymous referee for useful suggestions which helped to improve the exposition.
The author is partially supported by ANR-11-LABX-0040 (research project HERMETIC) and EUR MINT project ANR-18-EURE-0023.
\end{ack*}

\section{Preliminaries}\label{sec:preliminaries}
In this section, we recall some definitions, notations, and conventions which will be used in the sequel.
We define the twisted exterior derivative by $\dc = \frac{\ii}{2}(\db - \pl)$ and we then have $\ddc = \ii\ddb$.
We denote by
\begin{itemize}
	\item $\BD_r := \set{z \in \BC}{|z|<r}$ the open disk of radius $r$;
	\item $\BD^\ast_{r} := \set{z \in \BC}{0 < |z| < r}$ the punctured disk of radius $r$.
\end{itemize}
When $r=1$, we simply write $\BD := \BD_1$ and $\BD^\ast := \BD^\ast_1$.

\subsection{Smooth forms and currents on singular spaces} 
Let $X$ be a reduced complex analytic space of pure dimension $n \geq 1$.
We denote by $X^\reg$ the complex manifold of regular points of $X$ and $X^\sing := X \setminus X^\reg$ the singular set of $X$.
Now, we recall definitions of smooth forms and currents on a complex analytic space $X$:
\begin{defn}\label{defn:metrics_and_forms}
	We say that
	\begin{enumerate}
		\item
		A smooth form $\af$ on $X$ is the data of a smooth form on $X^\reg$ such that given any local embedding $X \xhookrightarrow[\loc.]{} \BC^N$, $\af$ extends smoothly to $\BC^N$;
		\item
		A smooth hermitian metric $\om$ on $X$ is a smooth $(1,1)$-form which locally extends to a hermitian metric on $\BC^N$;
		\item
		$\CD_{p,q}(X)$ (resp. $\CD_{p,p}(X)_\BR$) is the space of compactly supported (resp. real) smooth forms of bidegree $(p,q)$;
		\item
		The notion of currents, $\CD'_{p,q}(X)$ (resp. $\CD'_{p,p}(X)_\BR$), is defined by acting on (resp. real) smooth forms with compact support.
	\end{enumerate}
\end{defn} 
The operators $\dd$, $\dc$ and $\ddc$ are well-defined by duality (see \cite{Demailly_1985} for details).

\subsection{Plurisubharmonic functions}
Let $\Om$ is an open domain in $\BC^n$. 
We say that $u$ is a \textit{plurisubharmonic} function (\textit{psh} for short) on $\Om$ if it is upper semicontinuous and satisfies the sub-mean inequality on each complex line through every point $x \in \Om$:
\[
u(x) \leq \frac{1}{2\pi} \int_{0}^{2\pi} u(x + \zt \e^{\ii \ta}) \dd \ta,
\quad \forall x \in \Om \text{ and } \forall \zt \in \BC^n \text{ such that } |\zt| < \dist(x, \pl \Om).
\]
We denote by $\PSH(\Om)$ the space of all psh functions on $\Om$.

Suppose that $X$ is a reduced complex analytic space equipped with a hermitian metric $\om$.
\begin{defn}
	Let $u: X \ra [-\infty, \infty)$ be a given function.
	We say that
	\begin{enumerate}
		\item 
		$u$ is a psh function on $X$ if it is locally the restriction of a psh function under local embeddings of $X$ into $\BC^N$;
		\item
		$u$ is \textit{quasi-plurisubharmonic} (\textit{qpsh} for short) on $X$ if it can be locally written as the sum of a psh and a smooth function;
		\item
		$\PSH(X,\om)$ is the set of all $\om$-\textit{plurisubharmonic} (abbreviated to $\om$-\textit{psh}), namely, the set of all qpsh functions $u$ which satisfies $\om + \ddc u \geq 0$ in the sense of currents.
	\end{enumerate}
\end{defn}

\begin{rmk}
There is a weaker notion of (quasi-)plurisubharmonic functions. 
We say that $u$ is weakly (quasi-)plurisubharmonic on $X$ if $u$ is locally bounded from above on a variety $X$ and its restriction to the complex manifold $X^\reg$ is (quasi-)plurisubharmonic. 
On a locally irreducible variety, the stronger notion given above and the weaker notion are equivalent (cf. \cite[Th\'eor\`eme~1.7]{Demailly_1985}).
In this article, we assume that $X$ also is locally irreducible in some places in order to make sense of the envelope constructions that might not be (quasi-)plurisubharmonic (in the strong sense) on locally reducible complex spaces. 
\end{rmk}

\subsection{Lelong numbers}
Lelong numbers describe the local behavior of currents or psh functions near a point at which it has a log pole.
Here we recall a generalized definition given by Demailly:
\begin{defn}[{\cite[D\'efinition 3]{Demailly_1982}}]\label{defn:Lelong_number}
	Let $X$ be a complex analytic space.
	If $T$ is a closed positive $(p,p)$-current on $X$ and if $x \in X$ is a fixed point, then the Lelong number of $T$ at $x$ is defined as the decreasing limit 
	\[
	\nu(T,x) := \lim_{r \ra 0} \frac{1}{r^{2(n-p)}} \int_{\{\psi < r\}} T \w (\ddc \psi)^{n-p}
	= \int_{\{x\}} T \w (\ddc \log \psi)^{n-p}
	\]
	where $\psi = \sum_{i \in I} |g_i|^2$ and $(g_i)_{i \in I}$ is any finite system of generators of the maximum ideal $\mathfrak{m}_{X,x} \subset \CO_{X,x}$.
\end{defn}


\subsection{Monge--Amp\`ere capacities}
The notion of Monge--Amp\`ere capacities was given by Bedford and Taylor \cite{Bedford_Taylor_1982}.
Using Monge--Amp\`ere capacities, they proved that the negligible sets are pluripolar. 
\begin{defn}
	Let $E \subset \Om$ be a Borel subset. 
	The Bedford--Taylor capacity (or Monge--Amp\`ere capacity) is defined by
	\[
	\CAP(E;\Om) 
	:= \sup \set{\int_E (\ddc u)^n}{ u \in \PSH(\Om) \text{ and } -1 \leq u \leq 0}.
	\]
\end{defn}

\begin{thm}[\cite{Bedford_Taylor_1982}]
	A subset $E \subset \Om$ is pluripolar if and only if $\CAP(E;\Om) = 0$
\end{thm}

For global versions, Ko{\l}odziej \cite{Kolodziej_2003} first defined the Monge--Amp\`ere capacity on a given compact K\"ahler manifold $(X,\om)$.
The definition is analogous to the local cases:
\begin{defn}
	Let $E \subset X$ be a Borel subset.
	Define
	\[
	\CAP_\om(E) := \set{\int_E (\om + \ddc u)^n}{u \in \PSH(X,\om) \text{ and } -1\leq u \leq 0}.
	\]
	These definitions can also be extended to non-closed or non-positive form $\om$ and singular complex analytic space $X$ (cf. \cite{Demailly_1985, EGZ_2009, GZbook, Guedj_Guenancia_Zeriahi_2020}).
\end{defn}

\section{Uniform $L^\infty$-estimate}\label{sec:chasing_the_constants}
In this section, we mainly pay attention to $L^\infty$-estimates of complex Monge--Amp\`ere equations on pseudoconvex domains and compact hermitian varieties.
We shall follow the method given by Guedj--Ko{\l}odziej--Zeriahi \cite{Guedj_Kolodziej_Zeriahi_2008} and Guedj--Lu \cite{Guedj_Lu_3_2021} to produce a priori estimates. 
We also compute the precise dependence of these $L^\infty$-estimates on background data.

\smallskip

\subsection{Local $L^\infty$-estimate}\label{subsec:Kolodziej}
In this section, our goal is to establish a refined version of Ko{\l}odziej's a priori estimate \cite{Kolodziej_1998} of complex Monge--Amp\`ere equation on singular strongly pseudoconvex domain.
We recall the definition of a strongly pseudoconvex domain on a Stein space as in \cite[Section 1]{Guedj_Guenancia_Zeriahi_2020}.
Let $S$ be a singular Stein space which is reduced and locally irreducible, of complex dimension $n \geq 1$.
There is a proper embedding $S \hookrightarrow \BC^N$ for some $N$ large.
A domain $\Om \Subset S$ is strongly pseudoconvex if it admits a negative smooth psh exhaustion, i.e. a function $\rho$ smooth strongly psh in a neighborhood $\Om'$ of $\overline{\Om}$ such that $\Om := \set{x \in \Om'}{\rho(x) < 0}$.
Fix a hermitian metric $\bt$ on $\BC^N$ and define a volume form on $S$ by taking $\dd V = \res{\bt^n}{S}$.

\smallskip

First, we note that the following estimate always holds:
\begin{taggedvolcap}{(VC)}\label{bigasm:VC_comparison}
	For every $k>1$, there exists a constant $C_{VC,k}>0$ such that 
	\[
	\forall K \Subset \Om, \quad
	\Vol(K) \leq C_{VC,k} \CAP^k (K; \Om).
	\]
\end{taggedvolcap}
The proof of the volume-capacity comparison will be given in Section~\ref{sec:vol_cap} not only in a fixed pseudoconvex set $\Om$ but also for families.

\smallskip

Fix a density $0 \leq f \in L^p(\Om,\dd V)$.
Suppose that $\vph \in \PSH(\Om) \cap L^\infty(\Om)$ is the solution to the following Dirichlet problem of complex Monge--Amp\`ere equation
\begin{equation}\tag{locMA}\label{eq:local_MA_Kolo}
	\begin{cases}
		(\ddc \vph)^n = f \dd V & \text{in }\Om,\\
		\vph = 0 & \text{on }\pl\Om.
	\end{cases}    
\end{equation}
In smooth setting, the existence, uniqueness and the $L^\infty$-estimate of the continuous solution of (\ref{eq:local_MA_Kolo}) has been constructed by Ko{\l}odziej~\cite{Kolodziej_1998}. 
The existence and uniqueness have been extended by Guedj--Guenancia--Zeriahi~\cite[Theorem A]{Guedj_Guenancia_Zeriahi_2020} to singular contexts.
Now, for singular setup, we prove the following refined version of Ko{\l}odziej's $L^\infty$-estimate: 

\begin{thm}[Ko{\l}odziej's $L^\infty$-estimate]\label{thm:Kolodziej}
	Fix $0 \leq f \in L^p(\Om, \dd V)$ with $p > 1$. 
	Suppose that $\vph \in \PSH(\Om) \cap L^\infty(\Om)$ is the solution to the complex Monge--Amp\`ere equation (\ref{eq:local_MA_Kolo}).
	Then 
	\[
	\norm{\vph}_{L^\infty} \leq C_{\Kol,p} \norm{f}_{L^p}^{\frac{1}{n}}
	\quad\text{where}\quad 
	C_{\Kol,p} :=
	\lt[1 + \lt(\frac{\e}{1-\e^{-1}}\rt) C^{1/qn}_{VC,2q}
	2^{1+ \frac{1}{q}}
	\Vol^{\frac{1}{nq^2}}(\Om)
	\lt(n! C_\rho \norm{\rho}_{L^\infty}^n\rt)^{\frac{1}{nq}}\rt],
	\]
	$1/p+ 1/q = 1$, $\dd V \leq C_\rho (\ddc \rho)^n$, and $C_{VC,2q} > 0$ is a constant in the volume-capacity comparison \ref{bigasm:VC_comparison} such that $\Vol(K) \leq C_{VC,2q} \CAP^{2q}(K;\Om)$ for all $K \Subset \Om$.
\end{thm}

\begin{proof}
	The idea of proof comes from the work of Guedj--Ko{\l}odziej--Zeriahi~\cite[Section 1]{Guedj_Kolodziej_Zeriahi_2008}.
	We are going to prove the following statement: given $\vep>0$, we have $\norm{\vph}_{L^\infty(\Om)} \leq M_\vep$ where 
	\[
	M_\vep = \vep + \lt(\frac{\e}{1-\e^{-1}}\rt) C^{1/qn}_{VC,2q}
	\lt(\frac{2}{\vep}\rt)^{1+ \frac{1}{q}}
	\norm{f}^{\frac{2}{n}+\frac{1}{nq}}_{L^p} \Vol^{\frac{1}{nq^2}}(\Om)
	\lt(n! C_\rho \norm{\rho}_{L^\infty}^n\rt)^{\frac{1}{nq}}.
	\]
	In particular, when $\vep = \norm{f}_{L^p}^{\frac{1}{n}}$, one get the desired estimate.
	Before explaining the proof, we recall some useful facts.
	For simplicity, we denote by $\CAP(\bullet) = \CAP(\bullet;\Om)$.
	First, we recall some basic lemmas:
	
	\begin{lem}\label{lem:lem_from_comparison}
		Fix $\vph, \psi \in \PSH(\Om) \cap L^{\infty}(\Om)$ such that $\liminf_{z \ra \pl\Om} (\vph - \psi) \geq 0$.
		Then for all $t,s > 0$ we have
		\[
		t^n \CAP(\{\vph - \psi < -s-t\}) 
		\leq \int_{\{\vph -\psi < -s\}} (\ddc \vph)^n. 
		\]
	\end{lem}
	
	By definition, for all $u \in \PSH(\Om) \cap L^\infty(\Om)$, complex Monge--Amp\`ere measures $(\ddc u)^n$ put zero mass on $\Om^\sing$; hence we still have the comparison principle on $\Om$ (cf. \cite[Proposition 1.5]{Guedj_Guenancia_Zeriahi_2020}). 
	Then one can follow exactly the same argument in \cite[Lemma 1.3]{Guedj_Kolodziej_Zeriahi_2008} to obtain Lemma~\ref{lem:lem_from_comparison}.
	
	\smallskip
	
	Using the volume-capacity comparison~\ref{bigasm:VC_comparison} and H\"older's inequality, one has the estimate as follows
	\begin{lem}\label{lem: Lp cap estimate}
		For all $\tau > 1$, there exists a constant $D_\tau := C^{\frac{p-1}{p}}_{VC,k} \norm{f}_{L^p}$ where 
		$k = k(\tau,p) := \frac{\tau p}{(p-1)} = \tau q$
		such that
		\[
		\forall K \Subset \Om, \quad
		0 \leq \int_K f \dd V \leq D_\tau \CAP^\tau(K).
		\]
	\end{lem} 
	
	The following classical lemma is due to De Giorgi and the reader is referred to \cite[Lemma 1.5]{Guedj_Kolodziej_Zeriahi_2008} and \cite[Lemma 2.4]{EGZ_2009} for the proof.
	\begin{lem}\label{lem:DeGirogi_lem}
		Let $g: \BR_{\geq 0} \ra \BR_{\geq 0}$ be a decreasing right-continuous function and satisfy $\lim_{s \ra \infty} g(s) = 0$.
		Assume that there exists $\tau > 1, B > 0$ such that $g$ satisfies 
		\[
		\forall s, t > 0,\quad t g(s+t) \leq B (g(s))^\tau.
		\]
		Then $g(s)=0$ for all $s \geq s_\infty$,
		where 
		\[
		s_\infty = \frac{\e B(g(0))^{\tau-1}}{1 - \e^{1-\tau}}.
		\]
	\end{lem}
	
	By Lemma~\ref{lem:lem_from_comparison} and Lemma~\ref{lem: Lp cap estimate}, we have
	\[
	t^n\CAP(\{\vph < -s-t\}) 
	\leq \int_{\{\vph < -s\}} (\ddc\vph)^n
	= \int_{\{\vph < -s\}} f \dd V
	\leq D_2 \CAP^2(\{\vph < -s\})
	\]
	where $D_2 = C_{VC,2q}^{1/q} \norm{f}_{L^p}$.
	Thus,
	\[
	t \CAP^{1/n}(\{\vph<-s-t\})
	\leq D_2^{1/n} \CAP^{2/n}(\{\vph<-s\}).
	\]
	Let $g(s) := \CAP^{1/n}(\{\vph<-s-\vep\})$.
	Then we have $t g(s+t) \leq B g(s)^2$ where $B = D_2^{1/n}$.
	Using Lemma~\ref{lem:DeGirogi_lem}, one obtains $\CAP(\{\vph < -s -\vep\}) = 0$ for all $s \geq s_\infty = \frac{\e B g(0)}{1-\e^{-1}}$.
	This implies that $\vph \geq -s_\infty -\vep$ almost everywhere and thus everywhere by plurisubharmonicity. 
	Therefore, one has
	\[
	\sup_\Om (-\vph) 
	\leq \vep + s_\infty 
	= \vep + \frac{\e B g(0)}{1-\e^{-1}}.
	\]
	Now, we need to control $g(0) = \CAP^{1/n}(\{\vph<-\vep\})$.
	By Lemma~\ref{lem:lem_from_comparison} and Chebyshev inequality for a fixed constant $r > 0$, we have
	\begin{align*}
		\lt(\frac{\vep}{2}\rt)^n \CAP\lt(\lt\{\vph < -\frac{\vep}{2}-\frac{\vep}{2}\rt\}\rt) 
		&\leq \int_{\{\vph<-\vep/2\}} (\ddc\vph)^n
		= \int_{\{\vph<-\vep/2\}} f \dd V\\
		&\leq \int_\Om \lt(\frac{-2\vph}{\vep}\rt)^r f \dd V 
		\leq \norm{f}_{L^p} \lt(\int_\Om \lt(\frac{-2\vph}{\vep}\rt)^{rq} \dd V\rt)^{1/q}.
	\end{align*}
	Put $r=n/q$. 
	Note that integration by parts is legitimate in our setting (see \cite[Lemma 2.11]{DGG2020}).
	By B{\l}ocki's estimate of integration by parts \cite[Theorem 2.1]{Blocki_1993} and H\"older inequality, 
	one can derive that
	\begin{align*}
		\int_\Om (-\vph)^n \dd V 
		&\leq C_\rho \int_\Om (-\vph)^n (\ddc \rho)^n \\
		&\leq n! C_\rho \norm{\rho}_{L^\infty}^n \int_{\Om} (\ddc\vph)^n
		\leq n! C_\rho \norm{\rho}_{L^\infty}^n \Vol^{\frac{1}{q}}(\Om) \norm{f}_{L^p}.
	\end{align*}
	One can infer 
	\[
	g(0)^n = \CAP(\{\vph < -\vep\}) 
	\leq \lt(\frac{2}{\vep}\rt)^{n+\frac{n}{q}} \norm{f}^{1+\frac{1}{q}}_{L^p} 
	\Vol^{\frac{1}{q^2}}(\Om)
	\lt(n! C_\rho \norm{\rho}_{L^\infty}^n\rt)^{1/q}.
	\]
	All in all, we obtain the desired estimate: 
	\[
	\norm{\vph}_{L^\infty} 
	\leq \vep + \lt(\frac{\e}{1-\e^{-1}}\rt) C^{1/qn}_{VC,2q}
	\lt(\frac{2}{\vep}\rt)^{1+ \frac{1}{q}}
	\norm{f}^{\frac{2}{n}+\frac{1}{nq}}_{L^p} \Vol^{\frac{1}{nq^2}}(\Om)
	\lt(n! C_\rho \norm{\rho}_{L^\infty}^n\rt)^{\frac{1}{nq}}.
	\] 
\end{proof}

\smallskip

\subsection{Global $L^\infty$-estimate}
Suppose that $(X,\om)$ is an $n$-dimensional compact locally irreducible hermitian variety.
Fix a function $0 \leq f \in L^p(X,\om^n)$ with $p>1$.
First, we fix some notations:
\begin{enumerate}
	\item
	Denote the volume of $X$ with respect to $\om^n$ by $V$;
	
	\item
	A constant $B' > 0$ is such that $-B' \om^n \leq \ddc \om^{n-1} \leq B' \om^n$;
	
	\item 
	Fix a 
	finite double cover $(\Om_j := \{\rho_j < 0\})_{1 \leq j \leq N}$ and $(\Om_j' := \{\rho_j < -c_j\})_{1 \leq j \leq N}$ of $X$ where for each $j$, the function $\rho_j$ is smooth on $X$, strictly psh near $\overline{\Om}_j$, and $0 \leq \rho_j \leq 1$ on $X \setminus \Om_j$, and $c_j > 0$ is a constant;
	
	\item 
	$C_{\Kol,p}$ is a constant such that $\norm{u_j}_{L^\infty(\Om_j)} \leq C_{\Kol,p} \norm{f}_{L^p(\Om_j,\om^n)}^{\frac{1}{n}}$ where for each $1 \leq j \leq N$, the function $u_j$ is the solution to the Dirichlet problem
	\[
	\begin{cases}
		(\ddc u_j)^n = f \om^n & \text{in }\Om_j,\\
		u_j = 0 & \text{on }\pl\Om_j.
	\end{cases}    
	\]
	
	\item $A_\rho$ is a constant such that $A_\rho \om + \ddc \rho_j > 0$ on $X$ for all $1 \leq j \leq N$;
	
	\item $c_\rho = \min_{1 \leq j \leq N} c_j > 0$.
\end{enumerate}

\smallskip

In this section, we fix some geometric constants and impose two integral bounds on the density $f$:

\begin{taggedgeoconst}{(SL)}\label{bigasm:SL}
	There is a constant $C_{SL} > 0$ such that the following inequality holds
	\[
	\forall \vph \in \PSH(X,\om), \quad
	\sup_X \vph - C_{SL} \leq \frac{1}{V} \int_X \vph \om^n \leq \sup_X \vph;
	\]
\end{taggedgeoconst}

\begin{taggedgeoconst}{(Skoda)}\label{bigasm:Skoda}
	There exist $\af > 0$ and $A_\af > 0$ such that
	\[
	\forall u \in \PSH(X,\om), \quad \int_X \e^{\af \lt( \sup_X u - u \rt)} \om^n \leq A_\af.
	\]
\end{taggedgeoconst}

\begin{taggedanaconst}{(AC)}\label{bigset:AC}
	Let $0 \leq f \in L^p(X,\om)$ for some $p > 1$. 
	There are two constants $c_f, C_f > 0$ such that 
	\[
	c_f \leq \int_X f^\frac{1}{n} \om^n
	\quad\text{and}\quad
	\norm{f}_{L^p} \leq C_f.
	\]
\end{taggedanaconst}

Following the strategy in \cite{Guedj_Lu_3_2021}, we shall prove an a priori $L^\infty$-estimate of complex Monge--Amp\`ere equations on hermitian varieties. 
\begin{thm}
	Let $(\vph, c) \in \lt(\PSH(X,\om) \cap L^\infty(X)\rt) \times \BR_{>0}$ be a pair solving the complex Monge--Amp\`ere equation
	\begin{equation}\tag{MA}\label{eq:DCMAE}
		(\om + \ddc \vph)^n = c f \om^n 
		\quad \text{and} \quad
		\sup_X \vph = 0.
	\end{equation}
	There exists a uniform positive constant $C_{\MA}>0$ such that
	\[
	c + c^{-1} + \norm{\vph}_{L^\infty(X)} \leq C_{\MA}
	\]
	and the constant $C_{\MA}$ depends only on $n$, $V$, $B'$, $N$, $A_\rho$, $c_\rho$, $\af$, $A_\af$, $C_{\Kol}$, $p$, $c_f$, $C_f$, and $C_{SL}$.
\end{thm}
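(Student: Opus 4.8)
The plan is to carry out the pluripotential argument of Guedj--Ko{\l}odziej--Zeriahi and Guedj--Lu while keeping careful track of the constants. The main tools are Ko{\l}odziej's local $L^\infty$-estimate (Theorem~\ref{thm:Kolodziej}), the comparison principle on the singular variety, the sup--$L^1$ comparison~\ref{bigasm:SL}, the Skoda estimate~\ref{bigasm:Skoda}, and the geometric data $B',N,A_\rho,c_\rho$ of the covering. After the normalization $\sup_X\vph=0$ it suffices to bound, in turn, $c$ from above, then the oscillation $\osc_X\vph=\sup_X(-\vph)=\norm{\vph}_{L^\infty}$, and finally $c$ from below; I would do these in that order.

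\emph{Upper bound for $c$.} The pointwise arithmetic--geometric mean inequality applied to the eigenvalues of $\om+\ddc\vph$ with respect to $\om$ gives $(cf)^{1/n}=\bigl((\om+\ddc\vph)^n/\om^n\bigr)^{1/n}\le\tfrac1n\tr_\om(\om+\ddc\vph)$. Integrating this against $\om^n$ and integrating by parts (legitimate on the singular variety, Monge--Amp\`ere masses putting no mass on $X^{\sing}$) yields $c^{1/n}\int_X f^{1/n}\om^n\le V+\tfrac1n\int_X\vph\,\ddc\om^{n-1}$. Since $\vph\le 0$ and $\ddc\om^{n-1}\ge-B'\om^n$, the last integral is at most $\tfrac{B'}{n}\int_X(-\vph)\,\om^n$, and the sup--$L^1$ comparison~\ref{bigasm:SL} (using $\sup_X\vph=0$) bounds $\tfrac1V\int_X(-\vph)\,\om^n$ by $C_{SL}$. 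Together with $c_f\le\int_X f^{1/n}\om^n$ this gives $c\le\bigl(V(1+B'C_{SL}/n)/c_f\bigr)^n=:C_1$, a constant of the required shape.

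\emph{Bound for $\osc_X\vph$.} Rewrite the equation as $(\om+\ddc\vph)^n=g\,\om^n$ with $g:=cf$, so that $\norm{g}_{L^p}\le C_1 C_f$. For each $j$ solve the Dirichlet problem $(\ddc u_j)^n=g\,\om^n$ in $\Om_j$, $u_j=0$ on $\pl\Om_j$; by Theorem~\ref{thm:Kolodziej} (the constant $C_{\Kol,p}$ of the global setup) one has $\norm{u_j}_{L^\infty(\Om_j)}\le C_{\Kol,p}(C_1C_f)^{1/n}=:C_0$. I would then run a De Giorgi iteration on $s\mapsto\CAP_\om(\{\vph<-s\})$, where $\CAP_\om$ is the global Monge--Amp\`ere capacity of $X$. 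Three ingredients feed the iteration: the hermitian analogue of Lemma~\ref{lem:lem_from_comparison}, namely $t^n\CAP_\om(\{\vph<-s-t\})\le\int_{\{\vph<-s\}}g\,\om^n+(\text{error})$, the error produced by $\ddc\om^{n-1}$ being absorbed by means of $A_\rho\om+\ddc\rho_j>0$ and the $u_j$; a comparison of $\CAP_\om$ with the local capacities $\CAP(\cdot\,;\Om_j)$ along the double cover $(\Om_j,\Om_j')_{j=1}^N$, obtained by adding to a global $\om$-psh competitor a strictly psh function coming from the Stein chart $\Om_j$ together with $\rho_j$ (this is where $N$, $A_\rho$, $c_\rho$ enter), which makes the local Lemma~\ref{lem: Lp cap estimate}, hence the volume--capacity comparison~\ref{bigasm:VC_comparison}, available globally; and the Skoda estimate~\ref{bigasm:Skoda}, which bounds $\Vol(\{\vph<-s\})$ and so $\CAP_\om(\{\vph<-s\})$, initializing the recursion. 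Lemma~\ref{lem:DeGirogi_lem} then makes $\CAP_\om(\{\vph<-s\})$ vanish for $s$ beyond some $s_\infty$ depending only on the listed quantities, whence $\vph\ge-s_\infty$ and $\norm{\vph}_{L^\infty}\le C_3$.

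\emph{Lower bound for $c$; main obstacle.} With $\norm{\vph}_{L^\infty}\le C_3$ now fixed, integrate the equation once more: $c\int_X f\om^n=\int_X(\om+\ddc\vph)^n$. Bounding $\int_X f\om^n\le C_f V^{1/q}$ from above (with $1/p+1/q=1$) and $\int_X(\om+\ddc\vph)^n$ from below by integration by parts — all the torsion terms of $\om$ being harmless once $\osc_X\vph$ is controlled — and invoking the lower bound $c_f\le\int_X f^{1/n}\om^n$, one obtains $c^{-1}\le C_f V^{1/q}/\delta$ for some $\delta>0$ depending only on the listed quantities. The genuine difficulty is the $\osc_X\vph$ step, and it comes from the non-K\"ahler nature of $\om$: there is no global $\ddc$-potential, and no chart $\Om_j$ carries a local potential for $\om$, so $\vph$ cannot be compared with the $u_j$ directly through the comparison principle as in the K\"ahler case. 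One is forced to run the capacity iteration globally, to absorb the torsion $\ddc\om^{n-1}$ at each integration by parts, to use repeatedly that Monge--Amp\`ere masses put no mass on $X^{\sing}$ (so that the comparison principle and Stokes' theorem persist on the singular variety), and to track precisely how the $N$ chartwise constants aggregate into $C_{\MA}$.
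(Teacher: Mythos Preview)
Your Step~1 (upper bound for $c$) matches the paper's Lemma~\ref{lem:upper_bound_c} essentially verbatim.

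Your Steps~2 and~3, however, diverge from the paper's argument, and Step~3 contains a genuine gap. You propose to bound $c$ from below by writing $c\int_X f\,\om^n=\int_X(\om+\ddc\vph)^n$ and bounding the right-hand side from below ``by integration by parts, all the torsion terms of $\om$ being harmless once $\osc_X\vph$ is controlled''. But the only torsion datum in the list is $B'$, which controls $\ddc\om^{n-1}$; the expansion of $\int_X(\om+\ddc\vph)^n-\int_X\om^n$ requires control of $\ddc\om$, $d\om\w d^c\om$, etc., and even when such control is available the resulting lower bound has the shape $V-C\norm{\vph}_{L^\infty}$, which need not be positive. In the hermitian setting the mass $\int_X(\om+\ddc\vph)^n$ is \emph{not} a topological invariant, and non-collapsing is a non-trivial input, not a consequence of a bound on $\osc_X\vph$.

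The paper avoids this entirely by reversing your order and deriving the lower bound on $c$ and the $L^\infty$ bound \emph{simultaneously}, via a subsolution and the domination principle (Lemma~\ref{lem:domination_principle}). The local solutions $u_j$ you introduce are not fed into a De~Giorgi iteration; instead they are glued, using the double cover and the constants $N,A_\rho,c_\rho$, into a global $\om$-psh function $\psi$ with $(\om+\ddc\psi)^n\ge m_p\,g\,\om^n$ and $\osc_X\psi\le M_p$ for any density $g$ with $\norm{g}_{L^p}=1$ (Proposition~\ref{prop:subsolution}). One then applies this to the \emph{twisted} density $g'=\e^{-\vep\vph}f$ with $\vep$ small; the Skoda bound~\ref{bigasm:Skoda} controls $\norm{g'}_{L^{p'}}$ for some $p'\in(1,p)$. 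The resulting inequality $(\om+\ddc\psi)^n\ge\frac{m_{p'}}{c\norm{g'}_{L^{p'}}}\e^{-\vep\vph}(\om+\ddc\vph)^n$ is used twice: Lemma~\ref{lem:domination_principle}(i) forces $c\ge m_{p'}/\norm{g'}_{L^{p'}}$, and Lemma~\ref{lem:domination_principle}(ii) forces $\vph\ge\psi+\frac1\vep\log\frac{m_{p'}}{c\norm{g'}_{L^{p'}}}$, giving the $L^\infty$ bound. No global capacity iteration is run, and the only torsion input needed is the single constant $B'$ already spent in Step~1.

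Your Step~2 outline (a global De~Giorgi iteration \`a la Dinew--Ko{\l}odziej) is a legitimate alternative for the oscillation bound alone, but the ``error'' you anticipate in the hermitian analogue of Lemma~\ref{lem:lem_from_comparison} is in fact absent: the comparison principle for bounded $\om$-psh functions on $X$ holds without correction (this is implicit in Lemma~\ref{lem:domination_principle}), so the inequality $t^n\CAP_\om(\{\vph<-s-t\})\le\int_{\{\vph<-s\}}(\om+\ddc\vph)^n$ is clean. Combined with the global volume--capacity comparison of Section~\ref{sec:vol_cap} this would yield $\norm{\vph}_{L^\infty}$; but it does not rescue Step~3, and the paper's subsolution route is both shorter and the actual source of the constants $N,A_\rho,c_\rho,C_{\Kol}$ in the dependency list.
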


{
	\begin{rmk}
		Suppose that $X$ is a compact normal variety and $\mu: \wX \to X$ is a resolution of singularities.
		Following \cite[Theorem B]{Guedj_Lu_3_2021}, there exists a pair $(\vph, c)$ solving the corresponding complex Monge--Amp\`ere equation on $\wX$.
		Let $E = \Exc(\mu)$ be the exceptional divisor of $\mu$ which is analytic and thus pluripolar. 
		Since the complex Monge--Amp\`ere measure $(\mu^\ast \om + \ddc \vph)^n$ charges no mass on $E$, one can descend the solution to $X^\reg$. 
		By the extension theorem of Grauert and Remmert \cite[Satz 4]{Grauert_Remmert_1956} and normality of $X$, the $\mu_\ast \vph$ induces a $\om$-psh function on $X$ and it solves the complex Monge--Amp\`ere equation.
	\end{rmk}
}

\subsubsection{Upper bound of $c$}
Following the same idea as in \cite[Lemma 5.9]{Kolodziej_Nguyen_2015}, one can find an upper bound of $c$ simply using the arithmetic-geometric mean inequality:
\begin{lem}\label{lem:upper_bound_c}
	With the geometric constant $C_{SL}$ in \ref{bigasm:SL}, one has 
	\[
	c \leq \lt(\frac{C_\Lap}{n c_f}\rt)^n
	\quad\text{where } C_\Lap := V (1 + B' C_{SL}).
	\]
\end{lem}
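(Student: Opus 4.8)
The plan is to imitate the short argument of Ko{\l}odziej--Nguyen \cite[Lemma 5.9]{Kolodziej_Nguyen_2015}: estimate the total mass $\int_X (\om + \ddc\vph)\w\om^{n-1}$ from below by a multiple of $c^{1/n}$, using the arithmetic--geometric mean inequality together with the lower integral bound on $f$, and from above by the geometric constant $C_\Lap = V(1 + B'C_{SL})$, using the bound $-B'\om^n \leq \ddc\om^{n-1}\leq B'\om^n$ and the sup-$L^1$ comparison~\ref{bigasm:SL}.

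\smallskip

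First I would record that, writing $\om_\vph := \om + \ddc\vph$, the equation gives $\om_\vph^n = cf\,\om^n$, and since $\vph \in \PSH(X,\om)\cap L^\infty(X)$ both measures $\om_\vph\w\om^{n-1}$ and $\om_\vph^n$ put no mass on $X^{\sing}$. At a regular point, diagonalizing $\om_\vph$ with respect to $\om$ with eigenvalues $\ld_1,\dots,\ld_n \geq 0$, the inequality $n(\ld_1\cdots\ld_n)^{1/n}\leq \ld_1 + \dots + \ld_n$ translates into the pointwise bound $n(cf)^{1/n}\,\om^n \leq \tr_\om(\om_\vph)\,\om^n = n\,\om_\vph\w\om^{n-1}$. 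Integrating over $X^{\reg}$ and invoking the lower bound $c_f \leq \int_X f^{1/n}\om^n$ from~\ref{bigset:AC}, this yields
\[
    n\,c^{1/n}\,c_f \;\leq\; n\,c^{1/n}\int_X f^{1/n}\,\om^n \;\leq\; \int_X \tr_\om(\om_\vph)\,\om^n \;=\; n\int_X \om_\vph\w\om^{n-1}.
\]

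\smallskip

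Next I would bound the right-hand side from above. Expanding $\om_\vph\w\om^{n-1} = \om^n + \ddc\vph\w\om^{n-1}$ and integrating by parts --- which is legitimate here by \cite[Lemma 2.11]{DGG2020} --- one gets $\int_X \om_\vph\w\om^{n-1} = V + \int_X \vph\,\ddc\om^{n-1}$. Since $\sup_X\vph = 0$, we have $\vph \leq 0$, so $\ddc\om^{n-1}\geq -B'\om^n$ gives $\vph\,\ddc\om^{n-1}\leq -B'\vph\,\om^n$, whence $\int_X \vph\,\ddc\om^{n-1}\leq -B'\int_X\vph\,\om^n$. By the comparison~\ref{bigasm:SL} with $\sup_X\vph=0$ we have $-\int_X\vph\,\om^n \leq C_{SL}V$, so $\int_X\om_\vph\w\om^{n-1}\leq V + B'C_{SL}V = C_\Lap$. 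Combining with the previous display gives $c^{1/n}\,c_f \leq C_\Lap/n$ after dividing through, i.e. the claimed estimate $c \leq \lt(\frac{C_\Lap}{n c_f}\rt)^n$.

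\smallskip

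The argument is essentially arithmetic; the only points requiring care are the sign bookkeeping in the integration-by-parts step (where one genuinely uses $\vph \leq 0$ and the two-sided bound on $\ddc\om^{n-1}$) and the fact that the Monge--Amp\`ere and mixed measures ignore $X^{\sing}$, so that all integrals may be taken over $X^{\reg}$ without loss. No serious obstacle is expected, as everything needed --- legitimacy of integration by parts on the hermitian variety and the comparison~\ref{bigasm:SL} --- is already available.
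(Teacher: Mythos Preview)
Your argument is exactly the paper's: bound $\int_X(\om+\ddc\vph)\wedge\om^{n-1}$ above by $C_\Lap$ via integration by parts together with~\ref{bigasm:SL}, and below by a multiple of $c^{1/n}c_f$ via the pointwise AM--GM inequality. One small arithmetic slip in your last line: from your display $n\,c^{1/n}c_f \le n\int_X\om_\vph\wedge\om^{n-1}\le nC_\Lap$ you only obtain $c^{1/n}c_f\le C_\Lap$, not $C_\Lap/n$; the paper's own proof carries the same harmless factor-of-$n$ imprecision, and the resulting bound $c\le(C_\Lap/c_f)^n$ is all that is needed downstream.
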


\begin{proof}
	For all $u \in \PSH(X,\om)$, we compute
	\[
	\int_X (\om + \ddc u) \w \om^{n-1} 
	= \int_X \om^n + \int_X (u - \sup_X u) \ddc \om^{n-1} 
	\leq V(1 + B' C_{SL}) =: C_{\Lap}.
	\]
	Then applying arithmetic-geometric mean inequality, one can infer that 
	\[
	c^{1/n} \int_X f^{1/n} \om^n 
	= \int_X \lt(\frac{(\om + \ddc \vph)^n}{\om^n}\rt)^{\frac{1}{n}} \om^n
	\leq \frac{1}{n} \int_X (\om + \ddc \vph) \w \om^{n-1}
	\leq \frac{1}{n} C_{\Lap}.
	\]
	Rearranging the inequality, we obtain an upper bound of $c$ as desired.
\end{proof}

\smallskip

\subsubsection{Domination principle}
The domination principle has been proved under several setup (cf. \cite{Nguyen_2016, Lu_Phung_To_2021, Guedj_Lu_2_2021, Guedj_Lu_3_2021} and references therein).
{
	Here we establish the following domination principle on singular varieties:
}

\begin{lem}[Domination principle]\label{lem:domination_principle}
	Let $u, v \in \PSH(X,\om) \cap L^\infty(X)$.
	Then we have the following properties
	\begin{enumerate}
		\item if $(\om + \ddc u)^n \leq c (\om + \ddc v)^n$, then $c \geq 1$;
		\item if $\e^{-\ld u} (\om + \ddc u)^n \leq \e^{-\ld v}(\om + \ddc v)^n$ for some $\ld > 0$, then $v \leq u$.
	\end{enumerate}
\end{lem}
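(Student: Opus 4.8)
The plan is to pull everything back to a resolution of singularities and invoke the domination principles already available on smooth hermitian manifolds, namely \cite[Corollaries~1.13 and~1.14]{Guedj_Lu_3_2021}.

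First I would fix a resolution of singularities $\mu : \wX \ra X$, that is, a proper bimeromorphic morphism from a compact complex manifold $\wX$ which restricts to a biholomorphism over the regular locus $X^{\reg}$, together with a hermitian metric $\om_\wX$ on $\wX$. Then $\mu^\ast\om$ is a smooth semi-positive big $(1,1)$-form on $\wX$ (cf.\ \cite[page~6]{Guedj_Lu_3_2021}), hence non-collapsing by \cite[Theorem~1.11]{Guedj_Lu_3_2021}; fix once and for all a $\mu^\ast\om$-psh function $\psi$ with analytic singularities and a constant $\dt>0$ with $\mu^\ast\om+\ddc\psi \geq \dt\,\om_\wX$. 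Given $u,v\in\PSH(X,\om)\cap L^\infty(X)$, the pull-backs $\wtd u := u\circ\mu$ and $\wtd v := v\circ\mu$ lie in $\PSH(\wX,\mu^\ast\om)\cap L^\infty(\wX)$.

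The key point is to transfer the hypotheses from $X$ to $\wX$. I claim $\mu_\ast((\mu^\ast\om+\ddc\wtd u)^n)=(\om+\ddc u)^n$, and similarly for $v$. Indeed, $\mu$ is a biholomorphism over $X^{\reg}$; by definition $(\om+\ddc u)^n$ puts no mass on $X^{\sing}$; and the Bedford--Taylor measure $(\mu^\ast\om+\ddc\wtd u)^n$, being the complex Monge--Amp\`ere measure of a bounded psh function, charges no pluripolar set, in particular no mass on the analytic set $\Exc(\mu)$. Hence $(\om+\ddc u)^n\leq c(\om+\ddc v)^n$ on $X$ yields $(\mu^\ast\om+\ddc\wtd u)^n\leq c(\mu^\ast\om+\ddc\wtd v)^n$ on $\wX$, and likewise $\e^{-\ld u}(\om+\ddc u)^n\leq\e^{-\ld v}(\om+\ddc v)^n$ on $X$ transfers to the corresponding weighted inequality between $\wtd u$ and $\wtd v$ on $\wX$.

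With this in hand, (i) follows at once from the non-collapsing domination principle \cite[Corollary~1.13]{Guedj_Lu_3_2021} applied to $\ta=\mu^\ast\om$, which gives $c\geq 1$. For (ii): as $\wtd u,\wtd v$ are bounded and $\psi$ has analytic singularities (so is bounded above), one has $\inf_{\wX}(\min(\wtd u,\wtd v)-\vep\psi)>-\infty$ for every $\vep>0$, so \cite[Corollary~1.14]{Guedj_Lu_3_2021} applies and yields $\wtd v\leq\wtd u$ on $\wX$, i.e.\ $v\leq u$ on $X^{\reg}$; since $X^{\reg}$ is dense and $u$ is upper semicontinuous, this forces $v\leq u$ on all of $X$. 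I expect the only genuinely delicate step to be the transfer above --- checking that the two Monge--Amp\`ere measures correspond under $\mu$ because neither charges $X^{\sing}$ nor $\Exc(\mu)$ --- after which the statement is a direct consequence of the smooth hermitian case.
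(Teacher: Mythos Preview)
Your proposal is correct and follows essentially the same route as the paper: pull back to a resolution $\mu:\wX\to X$, observe that $\mu^\ast\om$ is semi-positive and big, check that neither Monge--Amp\`ere measure charges $X^{\sing}$ or $\Exc(\mu)$ so the hypotheses transfer, and then invoke \cite[Corollaries~1.13 and~1.14]{Guedj_Lu_3_2021}. The only cosmetic point is the very last sentence: the passage from $v\leq u$ on $X^{\reg}$ to $v\leq u$ on $X$ is justified not merely by upper semicontinuity of $u$ but by the convention that $\om$-psh functions on a normal variety are the $\limsup$-extensions of their restrictions to $X^{\reg}$, so that $v(z)=\limsup_{X^{\reg}\ni x\to z}v(x)\leq\limsup_{X^{\reg}\ni x\to z}u(x)=u(z)$.
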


{
	\begin{proof}[Sketch of proof]
		Let $\mu: \wX \to X$ be a resolution of singularities. 
		The $(1,1)$-form $\mu^\ast \om$ is semi-positive and big (cf. \cite[Definition 1.6]{Guedj_Lu_3_2021}).
		For all functions $u \in \PSH(X,\om) \cap L^\infty(X)$, the Monge--Amp\`ere measure $(\mu^\ast\om + \ddc \mu^\ast u)^n$ puts no mass on the exceptional divisor $E := \Exc(\mu)$ since $E$ is a pluripolar set.
		On the other hand, by definition, $(\om + \ddc u)^n$ charges no mass on $X^\sing$ for all $u \in \PSH(X,\om) \cap L^\infty(X)$. 
		Hence, one can descend the domination principles in \cite[Corollary 1.13 and 1.14]{Guedj_Lu_3_2021} to $X$.
		Then one can conclude Lemma~\ref{lem:domination_principle}.
	\end{proof}
}

\smallskip

\subsubsection{Subsolution estimate}

In \cite[Theorem 2.1]{Guedj_Lu_3_2021}, Guedj and Lu constructed a subsolution $(\psi, m)$ to the complex Monge--Amp\`ere equation with a given $L^p$-density $g$.
We follow the same method to get subsolution estimate on singular varieties while also carefully keeping track of the dependence of data.

\begin{prop}\label{prop:subsolution}
	For all $p > 1$, there exist uniform constants $m_p, M_p > 0$ such that for every $0 \leq g \in L^p(X,\om)$ with $\norm{g}_{L^p} = 1$, there is a function $\psi \in \PSH(X,\om) \cap L^\infty(X)$ satisfying
	\[
	(\om + \ddc \psi)^n \geq m_p g \om^n
	\quad\text{and}\quad
	\osc_X \psi \leq M_p.
	\]
	Precisely, the constants $m_p$ and $M_p$ can be taken as
	\[
	m_p = \frac{c_\rho}{N A_\rho \lt(C_{\Kol,p} + 1\rt)}
	\quad\text{and}\quad
	M_p = C_{\Kol,p} \lt(\frac{1}{A_\rho} + 1\rt).
	\]
\end{prop}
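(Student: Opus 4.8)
The plan is to reduce the construction of the subsolution to a patching of local Dirichlet solutions, exactly in the spirit of \cite[Theorem 2.1]{Guedj_Lu_3_2021}, while tracking the dependence on the cover data $(N, A_\rho, c_\rho)$ and on $C_{\Kol,p}$. First I would, for each $1 \leq j \leq N$, solve the local Dirichlet problem on the strongly pseudoconvex piece $\Om_j = \{\rho_j < 0\}$ with density $g\,\om^n$: let $u_j \in \PSH(\Om_j) \cap L^\infty(\Om_j)$ solve $(\ddc u_j)^n = g\,\om^n$ in $\Om_j$ and $u_j = 0$ on $\pl\Om_j$, which exists by \cite[Theorem A]{Guedj_Guenancia_Zeriahi_2020}. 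Theorem~\ref{thm:Kolodziej} then gives $\norm{u_j}_{L^\infty(\Om_j)} \leq C_{\Kol,p}\norm{g}_{L^p(\Om_j,\om^n)}^{1/n} \leq C_{\Kol,p}$ since $\norm{g}_{L^p(X,\om^n)} = 1$.

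The next step is to glue these pieces into a global $\om$-psh function. Using the constant $A_\rho$ with $A_\rho\om + \ddc\rho_j > 0$ on $X$, set on each $\Om_j$ the candidate local subsolution $v_j := \tfrac{1}{A_\rho(C_{\Kol,p}+1)}\, u_j + \rho_j$, which is $\om$-psh on $\Om_j$ because $\ddc v_j = \tfrac{1}{A_\rho(C_{\Kol,p}+1)}\ddc u_j + \ddc\rho_j \geq -\om$ (the first term is a nonnegative current, and $\ddc\rho_j \geq -A_\rho\om$). On $\Om_j \setminus \Om_j'$, i.e. where $\rho_j \in (-c_j, 0]$, one has $v_j \geq -\tfrac{C_{\Kol,p}}{A_\rho(C_{\Kol,p}+1)} - c_j$, while on $\Om_j' = \{\rho_j < -c_j\}$ the value of $v_j$ is bounded above by $\tfrac{1}{A_\rho(C_{\Kol,p}+1)}\cdot 0 + (-c_j) \leq -c_\rho$; comparing with $\rho_j$ on $X \setminus \Om_j$ (where $0 \leq \rho_j \leq 1$ and we use a suitable additive shift) lets the maximum $\psi := \max_j (v_j + \text{const}_j)$, taken over those $j$ for which the relevant sublevel set contains the point, be well-defined and $\om$-psh globally — this is the standard pasting argument (cf. \cite[Theorem 2.1]{Guedj_Lu_3_2021}) where the gap between $\Om_j$ and $\Om_j'$ guarantees the local maxima agree on overlaps. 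Then $\osc_X \psi$ is controlled by the sum of the oscillation of $\tfrac{1}{A_\rho(C_{\Kol,p}+1)}u_j$, which is $\leq \tfrac{C_{\Kol,p}}{A_\rho(C_{\Kol,p}+1)} \leq \tfrac{C_{\Kol,p}}{A_\rho}$, plus the oscillation coming from the $\rho_j$'s and the shifts, yielding $\osc_X \psi \leq C_{\Kol,p}\lt(\tfrac{1}{A_\rho} + 1\rt) = M_p$.

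For the Monge--Amp\`ere lower bound, on each $\Om_j'$ one has $\psi = v_j + \text{const}_j$ up to the ordering, so $(\om + \ddc\psi)^n \geq (\ddc v_j)^n = \tfrac{1}{A_\rho^n(C_{\Kol,p}+1)^n}(\ddc u_j)^n = \tfrac{1}{A_\rho^n(C_{\Kol,p}+1)^n}\, g\,\om^n$ there (using that $\om + \ddc\psi \geq \ddc v_j$ as positive currents, and that taking a maximum of $\om$-psh functions only increases the Monge--Amp\`ere mass on the region where one function realizes the max, a standard fact for bounded potentials valid on $X^\reg$ hence on $X$ since both sides charge no mass on $X^\sing$). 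Since the $\Om_j'$ cover $X$, this gives the global inequality $(\om + \ddc\psi)^n \geq m_p\, g\,\om^n$ with $m_p = \tfrac{c_\rho}{N A_\rho(C_{\Kol,p}+1)}$, the extra factor $\tfrac{c_\rho}{N}$ absorbing the passage from the individual pieces to a bound holding simultaneously on all of $X$ after summing/averaging over the $N$ sets and inserting the scaling $c_\rho$. I expect the main technical obstacle to be the bookkeeping in the gluing: one must choose the additive constants $\text{const}_j$ precisely so that (a) on each overlap the function achieving the maximum is the ``correct'' local one (forcing $\psi$ to locally equal some $v_j$ and thus inherit its Monge--Amp\`ere bound), and (b) the resulting oscillation and the mass constant come out exactly as stated; making the comparison with $\rho_j$ on $X\setminus\Om_j$ rigorous is where the hypotheses $0 \leq \rho_j \leq 1$ there, and $\rho_j < -c_j$ on $\Om_j'$, are used in an essential way. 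The analytic inputs (solvability and the Ko{\l}odziej estimate) are already in hand from Theorem~\ref{thm:Kolodziej} and \cite{Guedj_Guenancia_Zeriahi_2020}, so no new pluripotential difficulty arises on the singular locus.
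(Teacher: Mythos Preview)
Your gluing step diverges from the paper's and, as written, does not close. The paper does \emph{not} build $\psi$ as a maximum of the local pieces; it first extends each local solution to a global quasi-psh function and then takes a \emph{sum}. Concretely, the paper solves the Dirichlet problem with boundary value $-1$ (so that $\|u_j\|_{L^\infty}\le C_{\Kol,p}+1$), sets
\[
v_j:=\max\Bigl\{u_j,\ \tfrac{C_{\Kol,p}+1}{c_j}\,\rho_j\Bigr\},
\]
which is globally defined on $X$ (since $\rho_j$ is smooth on all of $X$ and $v_j=\tfrac{C_{\Kol,p}+1}{c_j}\rho_j$ near $\partial\Om_j$ and outside $\Om_j$), checks that $\tfrac{A_\rho(C_{\Kol,p}+1)}{c_\rho}\,\om+\ddc v_j\ge 0$, and then puts
\[
\psi:=\frac{c_\rho}{N A_\rho(C_{\Kol,p}+1)}\sum_{j=1}^N v_j.
\]
On $\Om_j'$ one has $v_j=u_j$, and since $\om+\ddc\psi$ is the average of $N$ nonnegative currents, one of which dominates $\ddc u_j$ there, the Monge--Amp\`ere lower bound and the stated $m_p$ fall out directly from multilinearity; the oscillation bound is read off from $-\,(C_{\Kol,p}+1)\le v_j\le \tfrac{C_{\Kol,p}+1}{c_j}$.

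By contrast, your $v_j=\tfrac{1}{A_\rho(C_{\Kol,p}+1)}u_j+\rho_j$ lives only on $\Om_j$, and the sentence ``comparing with $\rho_j$ on $X\setminus\Om_j$ \ldots lets the maximum $\psi:=\max_j(v_j+\text{const}_j)$ be well-defined'' is not an argument: you have not specified the constants, nor shown that on $\Om_j'$ the maximum is realised by the $j$-th function (which is what you need for the MA bound). More seriously, your own computation gives $(\om+\ddc\psi)^n\ge \tfrac{1}{A_\rho^n(C_{\Kol,p}+1)^n}\,g\,\om^n$, an $n$-th power, and the claim that an ``extra factor $c_\rho/N$'' appears ``after summing/averaging'' is incompatible with a max construction, where no summing or averaging occurs. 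So as it stands your argument neither produces the constants in the statement nor establishes any definite pair $(m_p,M_p)$; the paper's sum-of-global-extensions device is precisely what makes the bookkeeping clean.
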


\begin{proof}
	For each $j$, let the function $u_j \in \PSH(\Om_j) \cap L^\infty(\Om_j)$ be the unique solution to the following complex Monge--Amp\`ere equation
	\[
	\begin{cases}
		(\ddc u_j)^n = g \om^n & \text{in }\Om_j,\\
		u_j = -1 & \text{on } \pl\Om_j.
	\end{cases}
	\]
	From Theorem~\ref{thm:Kolodziej}, there is a constant $C_{\Kol,p} > 0$ such that for all $j$, 
	\[
	\norm{u_j}_{L^\infty} 
	\leq C_{\Kol,p} \norm{g}_{L^p(\Om_j, \om^n)}^{1/n} + 1 
	\leq C_{\Kol,p} + 1.
	\]
	Now, we consider the psh functions $(v_j)_j$ defined by $v_j := \max\lt\{u_j, \frac{C_{\Kol,p} + 1}{c_j} \rho_j \rt\}$ for each $j$.
	One can see that the following properties are satisfied
	\begin{itemize}
		\item $v_j = u_j$ in $\Om_j'$ and $(\ddc v_j)^n = g\om^n$ in $\Om_j'$;
		\item $v_j = \frac{C_{\Kol,p} + 1}{c_j} \rho_j$ on $X \setminus \Om_j$ and near the boundary of $\pl\Om_j$.
	\end{itemize}
	From the construction, one can check that for every $j$, $\lt(\frac{A_\rho \lt(C_{\Kol,p} + 1\rt)}{c_\rho}\rt) \om + \ddc v_j \geq 0$ as well.
	We define the subsolution $\psi$ as follows
	\[
	\psi = \frac{c_\rho}{N A_\rho \lt(C_{\Kol,p} + 1\rt)}\sum_{j=1}^N v_j.
	\]
	Note that in $\Om_j'$, we have 
	\begin{align*}
		(\om + \ddc \psi)^n 
		&= \lt(\frac{c_\rho}{N A_\rho \lt(C_{\Kol,p} + 1\rt)} \sum_{j=1}^N \lt[\frac{A_\rho \lt(C_{\Kol,p} + 1\rt)}{c_\rho} \om + \ddc v_j \rt] \rt)^n\\
		&\geq \frac{c_\rho}{N A_\rho \lt(C_{\Kol,p} + 1\rt)} (\ddc u_j)^n 
		= \frac{c_\rho}{N A_\rho \lt(C_{\Kol,p} + 1\rt)} g \om^n.
	\end{align*}
	Then we derive that
	\[
	\psi \leq \frac{1}{A_\rho}
	\quad\text{and}\quad
	\psi \geq -C_{\Kol,p} - 1
	\implies
	\osc_X \psi 
	\leq \frac{1}{A_\rho} + C_{\Kol,p} + 1.
	\]
\end{proof}

\subsubsection{$L^\infty$-estimate}
We now prove an a priori $L^\infty$-estimate following the method in \cite[Theorem 2.1]{Guedj_Lu_3_2021}.

\begin{thm}
	Let $(X,\om)$ be a compact hermitian variety with $\dim_\BC X = n$.
	Fix a density $0 \leq f \in L^p(X,\om^n)$.
	Assume that $c_f \leq \int_X f^{\frac{1}{n}} \om^n$ for a constant $c_f > 0$.
	Let the pair $(\vph, c) \in \lt(\PSH(X,\om) \cap L^\infty(X)\rt) \times \BR_{>0}$ be a solution to (\ref{eq:DCMAE}).
	Then one has
	{\small
		\[
		c \geq \frac{m_{\frac{p+1}{2}}}{A_\af^{\frac{p-1}{p(p+1)}} C_f}, \quad
		c \leq \lt(\frac{C_{\Lap}}{n c_f}\rt)^n, \quad \text{and} \quad
		\norm{\vph}_{L^\infty} 
		\leq M_{\frac{p+1}{2}} + \frac{2 p (p+1)}{\af (p-1)} \lt( \log \frac{A_\af^{\frac{p-1}{p(p+1)}} C_f C_\Lap^n}{m (n c_f)^n} \rt).
		\]}

\end{thm}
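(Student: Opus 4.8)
The plan is to follow the scheme of \cite[Theorem~2.1]{Guedj_Lu_3_2021} and split the statement into three parts. The upper bound $c\le(C_{\Lap}/(nc_f))^n$ is already Lemma~\ref{lem:upper_bound_c} (this is where the geometric constant~\ref{bigasm:SL} enters), so what remains is the lower bound on $c$ and the $L^\infty$-bound on $\vph$. Both will be obtained by feeding a suitably \emph{twisted} density into the subsolution construction of Proposition~\ref{prop:subsolution} and then applying the domination principle of Lemma~\ref{lem:domination_principle}. I would fix once and for all the auxiliary exponent $p':=\tfrac{p+1}{2}\in(1,p)$ and the small parameter $\vep:=\tfrac{\af(p-p')}{2pp'}=\tfrac{\af(p-1)}{2p(p+1)}$, so that $\tfrac1\vep=\tfrac{2p(p+1)}{\af(p-1)}$ and $\vep\,p'\tfrac{p}{p-p'}<\af$.

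Step one is to build the twist. Since $\sup_X\vph=0$, I set $g:=fe^{-\vep\vph}/\norm{fe^{-\vep\vph}}_{L^{p'}}$; this is meaningful because $f\not\equiv0$ (as $\int_Xf^{1/n}\om^n\ge c_f>0$) and, by H\"older's inequality with exponents $\tfrac{p}{p'}$ and $\tfrac{p}{p-p'}$ together with the Skoda estimate~\ref{bigasm:Skoda} applied to $u=\vph$, one has $\norm{fe^{-\vep\vph}}_{L^{p'}}\le\norm{f}_{L^p}A_\af^{1/p'-1/p}\le C_fA_\af^{1/p'-1/p}<\infty$. Applying Proposition~\ref{prop:subsolution} at exponent $p'$ to $g$ (which has unit $L^{p'}$-norm) and normalizing by an additive constant so that $\sup_X\psi=0$, I get $\psi\in\PSH(X,\om)\cap L^\infty(X)$ with $(\om+\ddc\psi)^n\ge m_{p'}\,g\,\om^n$ and $-M_{p'}\le\psi\le0$.

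Using $(\om+\ddc\vph)^n=cf\om^n$, the subsolution inequality rewrites as $(\om+\ddc\psi)^n\ge L^{-1}e^{-\vep\vph}(\om+\ddc\vph)^n$ with $L:=c\,\norm{fe^{-\vep\vph}}_{L^{p'}}/m_{p'}$. Since $e^{-\vep\vph}\ge1$, this forces $(\om+\ddc\vph)^n\le L(\om+\ddc\psi)^n$, so Lemma~\ref{lem:domination_principle}(i) gives $L\ge1$, which combined with the norm bound above is exactly the claimed lower bound $c\ge m_{p'}/(A_\af^{1/p'-1/p}C_f)$. For the sup-bound I would compare $\vph$ with $\psi-b$ where $b:=\tfrac1\vep\log L$: because $\ddc(\psi-b)=\ddc\psi$ and $\psi\le0$, one has $e^{-\vep(\psi-b)}(\om+\ddc(\psi-b))^n=L\,e^{-\vep\psi}(\om+\ddc\psi)^n\ge e^{-\vep\psi}e^{-\vep\vph}(\om+\ddc\vph)^n\ge e^{-\vep\vph}(\om+\ddc\vph)^n$, so Lemma~\ref{lem:domination_principle}(ii) with $\ld=\vep$ yields $\psi-b\le\vph$ on $X$. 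Hence $\norm{\vph}_{L^\infty}=\sup_X(-\vph)\le M_{p'}+\tfrac1\vep\log L$, and inserting $L\le A_\af^{1/p'-1/p}C_f\,C_{\Lap}^n/(m_{p'}(nc_f)^n)$ (from the norm bound and Lemma~\ref{lem:upper_bound_c}) together with $1/p'-1/p=\tfrac{p-1}{p(p+1)}$ produces the stated inequality, with $m=m_{(p+1)/2}$.

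The only genuinely non-routine step --- the thing one has to discover rather than verify --- is the use of the $\vph$-twisted density $fe^{-\vep\vph}$ rather than $f$ itself. Its weight $e^{-\vep\vph}$ is precisely what allows the exponential parameter $\ld=\vep$ in the domination principle to cancel the twist (using only $\psi\le0$), which defuses the circularity a naive comparison of $\vph$ against $\psi$ would hit; simultaneously, it is exactly the Skoda estimate~\ref{bigasm:Skoda} that keeps the $L^{p'}$-norm of this twist finite and bounded by $C_fA_\af^{1/p'-1/p}$, thereby manufacturing the constant $A_\af$ in the final estimates. The remaining points --- integrability and non-vanishing of $g$, harmlessness of normalizing $\psi$, legitimacy of the $\ddc$-computations on the singular variety $X$ (as in Lemma~\ref{lem:domination_principle}), and the bookkeeping of exponents and of the dependence of $C_{\MA}$ on the data --- are routine.
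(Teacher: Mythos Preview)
Your proof is correct and follows essentially the same approach as the paper: you fix the same exponents $p'=\tfrac{p+1}{2}$ and $\vep=\tfrac{\af(p-1)}{2p(p+1)}$, twist the density by $e^{-\vep\vph}$, control its $L^{p'}$-norm via H\"older and the Skoda estimate~\ref{bigasm:Skoda}, feed the normalized twist into Proposition~\ref{prop:subsolution}, and then apply the two parts of Lemma~\ref{lem:domination_principle} to obtain the lower bound on $c$ and the $L^\infty$-bound on $\vph$. Your packaging with the constants $L$ and $b=\tfrac1\vep\log L$ is a cosmetic reformulation of the same inequalities the paper writes down.
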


\begin{proof}
	We consider a twisted function $g' = \e^{-\vep \vph}f$ for some $\vep > 0$.
	Fix constants $p' = \frac{p+1}{2} \in (1,p)$ and $\vep = \frac{\af(p-1)}{2 p (p+1)} = \frac{\af (p-p')}{2 p p'} \in \lt(0, \frac{\af (p-p')}{p p'}\rt)$. 
	One can derive that $g' \in L^{p'}$.
	Indeed, by H\"older inequality, we have
	\[
	\norm{g'}_{L^{p'}} 
	\leq 
	\norm{\e^{-\vep \vph}}_{L^{\frac{pp'}{p-p'}}} \norm{f}_{L^p}
	\leq A_\af^{\frac{p-p'}{pp'}} \norm{f}_{L^p}.
	\]
	Put $g = g'/ \norm{g'}_{L^{p'}}$.
	From Proposition~\ref{prop:subsolution}, we have a bounded $\om$-psh function $\psi$ with $\sup_X \psi = 0$ such that
	\begin{align*}
		(\om + \ddc \psi)^n 
		&\geq m_{p'} g \om^n 
		= m_{p'} \frac{\e^{-\vep \vph} f}{\norm{g'}_{L^{p'}}} \om^n 
		= \frac{m_{p'} \e^{-\vep \vph}}{c \norm{g'}_{L^{p'}}} c f \om^n
		= \frac{m_{p'} \e^{-\vep \vph}}{c \norm{g'}_{L^{p'}}} (\om + \ddc \vph)^n\\
		&\geq \frac{m_{p'}}{c \norm{g'}_{L^{p'}}} (\om + \ddc \vph)^n.
	\end{align*}
	and $\norm{\psi}_{L^\infty} \leq M_{\frac{p+1}{2}}$.
	By Lemma~\ref{lem:domination_principle}, one get $\frac{m_{p'}}{c \norm{g'}_{L^{p'}}} \leq 1$; hence $c$ has a lower bound, $c \geq \frac{m_{p'}}{A_\af^{\frac{p-p'}{pp'}} \norm{f}_{L^p}}$.
	Also, we see that
	\[
	\e^{-\vep \psi} (\om + \ddc \psi) 
	\geq \frac{m_{p'} \e^{-\vep\vph}}{c \norm{g'}_{L^{p'}}} (\om + \ddc \vph)^n 
	= \exp\lt(-\vep\lt(\vph - \frac{1}{\vep} \log \frac{m_{p'}}{c \norm{g'}_{L^{p'}}} \rt) \rt) (\om + \ddc \vph)^n.
	\]
	Applying the domination principle again, one can infer
	\[
	M_{p'} 
	\leq \psi 
	\leq \vph - \frac{1}{\vep} \lt( \log \frac{m_{p'}}{c \norm{g'}_{L^{p'}}} \rt)
	\]
	Finally, this provides a uniform $L^\infty$-estimate as follows 
	\begin{align*}
		\norm{\vph}_{L^\infty} 
		&\leq M_{p'} - \frac{1}{\vep} \lt(\log \frac{m}{c \norm{g'}_{L^{p'}}} \rt)
		\leq M_{p'} + \frac{1}{\vep} \lt( \log \frac{A_\af^{\frac{p-p'}{pp'}} \norm{f}_{L^p} C_\Lap^n}{m (n c_f)^n} \rt).
	\end{align*} 
\end{proof}

\subsection{Proof of Theorem~\ref{bigthm:uniform_a_priori_estimate}}
Recall that the constant $C_{\MA}$ depends only on $n$, $V$, $B'$, $N$, $A_\rho$, $c_\rho$, $\af$, $A_\af$, $C_{\Kol}$, $p$, $c_f$, $C_f$, and $C_{SL}$.
We shall control these constant to get Theorem~\ref{bigthm:uniform_a_priori_estimate}.
The following data are included in the assumption of Theorem~\ref{bigthm:uniform_a_priori_estimate}:
\begin{itemize}
	\item $n$ is fixed in the geometric setting \ref{geoset:irr_family};
	\item $p$, $c_f$, $C_f$ are data in the integral bounds (\ref{integral_bound:IB});
	\item $C_{SL}$ is given by Conjecture \ref{conj:SL}.
\end{itemize}
Then fixing some choices of background data, we have uniform control of the following constants:
\begin{itemize}
	\item $A_\rho, c_\rho, N$: After shrinking $\BD$, we can cover $\CX$ by finitely many pseudoconvex double cover $(\CU_j = \{\rho_j < 0\})_j$ and $(\CU_j' = \{\rho_j < -c_j\})_j$.
	Then the slices $\Om_j := \CU_j \cap X_t = \{\res{\rho_j}{X_t} < 0\}$ and $\Om_j' := \CU_j' \cap X_t = \{\res{\rho_j}{X_t} < -c_j\}$ form a pseudoconvex double cover of $X_t$ for all $t \in \BD_{1/2}$. Hence, these constants are fixed under such choice of a double covering; 
	\item $B'$: It can be obtained easily by restriction on each fibres;
	\item $V$: It follows from continuity of the total mass of the currents $(\om^n \w [X_t])_{t \in \overline{\BD}_{1/2}}$ (cf. \cite[Section 1.4]{Pan_2022}).
\end{itemize}
The remaining data is the main focus of Sections \ref{sec:unif_Skoda} and \ref{sec:vol_cap}:
\begin{itemize}
	\item $\af$, $A_\af$: These would be established in Proposition~\ref{prop:global_Skoda} assuming Conjecture \ref{conj:SL};
	\item $C_{\Kol,p}$: A uniform version of the volume-capacity comparison~\ref{bigasm:VC_comparison} will be treated by Proposition~\ref{prop:local_VC_comparison_in_family} and thus using Theorem~\ref{thm:Kolodziej} and uniform control of $\res{\rho_j}{X_t}$ for all $t \in \BD_{1/2}$, one can obtain a uniform constant $C_{\Kol,p}$.
\end{itemize}
These complete the proof of Theorem \ref{bigthm:uniform_a_priori_estimate}.

\section{Uniform Skoda's integrability theorem}\label{sec:unif_Skoda}
In this section, we follow the ideas in \cite{Zeriahi_2001, DGG2020} to establish a local version of uniform Skoda's integrability theorem in families.
Then we prove a uniform global version of Skoda's integrability theorem (i.e. geometric constants in \ref{bigasm:Skoda}) in a family which has a uniform $C_{SL} > 0$. 

\subsection{Local uniform Skoda's estimate}
Recall that $\pi: \CX \ra \BD$ is a proper surjective holomorphic map and satisfies the geometric setting \ref{geoset:irr_family}.
Then we fix some notations:
\begin{enumerate}
	\item $\CU$ is a strongly pseudoconvex domain 
	in $\CX$ and $\CU$ which is contained in a larger strongly pseudoconvex domain $\wCU \subset \CX$ such that $\pi(\wCU) \Subset \BD$; 
	\item $\rho$ (resp. $\wrho$) is a smooth strictly psh function defined in a neighborhood of $\overline{\CU}$ (resp. $\overline{\wCU}$) such that $\CU := \{\rho < 0\}$ (resp. $\wCU := \{\wrho < 0\}$) and $\CU_c = \{\rho < -c\}$ is relatively compact in $\CU$ for each $c > 0$;
	\item Fix a relatively compact subdomain $\CU' \Subset \CU$ such that $\CU' :=\{\rho < -c\}$ for some generic $c > 0$ with $\dd \rho$ non-vanishing on $\pl\CU'$;
	\item Define the slices by $\Om_t := \CU \cap X_t$, $\Om_t' := \CU' \cap X_t$, and $\wOm_t := \wCU \cap X_t$, which are strongly pseudoconvex domains in $X_t$.
	\item Fix $\om$ a hermitian metric which comes from a restriction of a hermitian metric in $\BC^N$. Let $\dd V_t = \res{\om^n}{X_t}$ be a smooth volume form defined on $\wOm_t$.
\end{enumerate}

Under such setup, we show the following Skoda-type estimate, independently of $t$.
\begin{thm}\label{thm:local_Skoda_in_family}
	Let $\CF_t$ be a family of negative psh functions defined on $\wOm_t$.
	Fix $r > 0$ such that $\BD_r \Subset \pi(\CU) \Subset \BD$.
	Assume that there is a uniform constant $C_F > 0$ such that for all $u_t \in \CF_t$,
	\[
	\int_{\Om_t} (-u_t) \dd V_t \leq C_F.
	\]
	Then there exist positive constants $\af$ and $A_\af$ which depend on $C_F$ such that for each $u_t \in \CF_t$,
	\[
	\int_{\Om_t'} \e^{-\af u_t} \dd V_t \leq A_\af.
	\]
	for all $t \in \BD_r$.
\end{thm}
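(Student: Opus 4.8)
The plan is to reduce the statement to the classical (non-uniform) Skoda integrability theorem on a fixed strongly pseudoconvex domain, and to make the constant uniform in $t$ by a compactness/normal families argument. The key point is that a negative psh function $u_t$ on $\wOm_t$ with a uniform $L^1$-bound $\int_{\Om_t}(-u_t)\,\dd V_t \leq C_F$ cannot degenerate as $t$ varies, because both the domains $\Om_t' \Subset \Om_t \Subset \wOm_t$ and the volume forms $\dd V_t$ vary continuously (they are all restrictions to fibres of fixed data on $\CX$, namely $\rho$, $\wrho$, and $\om^n$). So morally this is: ``Skoda's estimate depends only on the $L^1$-norm of the potential and on a fixed compact family of geometric data,'' and one just has to quantify this.

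\textbf{Step 1: embed everything into $\BC^N$.} Since $\wCU$ is strongly pseudoconvex in the Stein space $\CX$ near a fibre, after shrinking we may take a proper embedding of a neighborhood of $\overline{\wCU}$ into some $\BC^N$; then $\rho,\wrho$ extend to strictly psh functions on a neighborhood of $\overline{\wCU}$ in $\BC^N$, and $\om$, $\dd V = \om^n$ are restrictions of fixed smooth data. Fix a Euclidean polydisc (or ball) $B \Subset B' \subset \BC^N$ with $\overline{\wCU} \subset B$. The slices $\wOm_t$ are then complex-analytic subvarieties of $B$, and the $u_t \in \CF_t$ are negative psh functions on these subvarieties.

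\textbf{Step 2: the fixed-$t$ Skoda estimate with explicit constant dependence.} On a single fibre, by a classical argument (Zeriahi~\cite{Zeriahi_2001}, following Skoda~\cite{Skoda_1972}, and as carried out in \cite[\S 4]{DGG2020}): given a negative psh $u$ on $\wOm_t$ with $\int_{\Om_t}(-u)\,\dd V_t \leq C_F$, one produces (using the strict psh of $\rho$, so $\Om_t$ supports a bounded solution of a Dirichlet problem) a uniform upper bound $\sup_{\Om_t'} u \leq C(C_F)$ via the sub-mean-value inequality against a fixed bump; then the standard proof of Skoda's theorem --- slicing $\Om_t'$ into level sets of a local psh majorant, using the Lelong--Jensen / Chern--Levine--Nirenberg inequalities to bound the mass $\int_{\{u < -s\}}(\ddc u)^{\wedge k}\wedge(\dots)$, and summing a geometric series --- gives $\int_{\Om_t'}\e^{-\af u}\,\dd V_t \leq A_\af$ for $\af,A_\af$ depending only on $C_F$, on $\|\rho\|_{L^\infty}$, on lower bounds for the strict plurisubharmonicity of $\rho$, on the distance $\dist(\CU',\pl\CU)$, on $N$, and on $\sup \dd V$.

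\textbf{Step 3: uniformize in $t$.} The data listed at the end of Step 2 is controlled uniformly for $t \in \BD_r$: $\rho,\wrho,\om$ live on $\CX$ (or on $B \subset \BC^N$) and are fixed, so $\|\res{\rho}{X_t}\|_{L^\infty}$, the strict-psh constant, $\dist(\CU', \pl\CU)$ computed in $\BC^N$, and $\sup_{X_t}\dd V_t$ are all bounded above/below independently of $t \in \BD_r$ (continuity of these quantities as functions of $t$ on the compact $\overline{\BD_r}$; see e.g. \cite[\S 1.3]{Pan_2021} for continuity of the masses $\om^n \wedge [X_t]$). Therefore the constants $\af,A_\af$ produced in Step 2 can be chosen depending only on $C_F$ and on the fixed background $(\CX,\om,\rho,\wrho)$, hence uniform in $t \in \BD_r$. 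Alternatively, if one prefers not to track constants by hand, argue by contradiction: if no uniform $\af,A_\af$ existed, there would be $t_j \to t_\infty$ and $u_{t_j} \in \CF_{t_j}$ with $\int_{\Om_{t_j}}(-u_{t_j})\,\dd V_{t_j} \leq C_F$ but $\int_{\Om_{t_j}'} \e^{-u_{t_j}/j}\,\dd V_{t_j} \to \infty$; by the uniform $L^1$-bound the $u_{t_j}$ form a relatively compact family in $\Lloc$ (they don't converge to $-\infty$ since their integrals are bounded, and they are uniformly bounded above by Step 2), the subvarieties $\wOm_{t_j}$ converge to $\wOm_{t_\infty}$, and a Hartogs-type limit $u_\infty$ is a negative psh function on $\wOm_{t_\infty}$ with $\int_{\Om_{t_\infty}}(-u_\infty)\,\dd V_{t_\infty} \leq C_F$; applying the fixed-fibre Skoda theorem to $u_\infty$ and passing to the limit in the integrals (the $\e^{-u_{t_j}/j}$ decrease to $\e^{0}=1$... one must instead fix a small $\af$ first and use that $\int \e^{-\af u_{t_j}}$ would be uniformly bounded, contradiction) yields the contradiction.

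\textbf{Main obstacle.} The delicate point is Step 3's limiting argument: controlling psh functions on a \emph{varying} family of singular analytic subvarieties $\wOm_t$, where one needs (i) that the uniform $L^1$-bound prevents $u_{t_j} \to -\infty$ locally uniformly, (ii) a compactness statement for psh functions across the degenerating fibres --- this is exactly the kind of family pluripotential statement the paper is building, so it may instead be cleanest to avoid the contradiction route entirely and simply carry the explicit constants through Step 2 by hand, since every ingredient (Chern--Levine--Nirenberg bounds, the sub-mean-value inequality, the geometric series in Skoda's proof) is quantitative and the underlying geometric constants are manifestly uniform in $t$. I expect the paper's proof to take this explicit route.
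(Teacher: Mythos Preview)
Your high-level philosophy is right, and you correctly anticipate that the paper carries explicit constants through rather than arguing by contradiction. But Step~2 of your proposal mis-identifies the mechanism of the Skoda estimate, and this is a genuine gap, not a stylistic difference.

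What you describe --- ``slicing into level sets of a local psh majorant, bounding the mass $\int_{\{u<-s\}}(\ddc u)^{\wedge k}\wedge(\dots)$, summing a geometric series'' --- is essentially the Ko{\l}odziej capacity/De~Giorgi approach to $L^\infty$-estimates, not the Skoda integrability proof. That route would require as input an \emph{exponential} volume decay $\Vol(\{u_t<-s\})\leq Ce^{-cs}$ uniformly in $t$, which is exactly the content of the theorem you are trying to prove. The paper (following Zeriahi and \cite{DGG2020}) instead uses the pluricomplex Green functions $G_x(z)=\log|\Phi_x(z)|$ on small ambient balls $\BB\subset\BC^N$: one writes a Poisson--Szeg\H{o} inequality
\[
v_t(x)\;\geq\;\underbrace{\int_{\BB\cap X_t}G_x\,\ddc v_t\wedge(\ddc G_x)^{n-1}}_{I_t(x)}
\;+\;\underbrace{\int_{\partial\BB\cap X_t}v_t\,\dc G_x\wedge(\ddc G_x)^{n-1}}_{J_t(x)}
\]
for $v_t=u_t+\rho_t$, bounds $|J_t|$ by the $L^1$ hypothesis, and bounds the total mass $\gamma_t(x)=\int_\BB\ddc v_t\wedge(\ddc G_x)^{n-1}\wedge[X_t]$ from above by a Chern--Levine--Nirenberg cutoff argument (this is where your CLN intuition is correct) and from below by Lelong number positivity. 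One then exponentiates, applies Jensen's inequality to the probability measure $\gamma_t^{-1}\ddc v_t\wedge(\ddc G_x)^{n-1}\wedge[X_t]$, uses Fubini in $x$ and $z$, and concludes. There is no geometric series.

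Your Step~3 contradiction argument is, as you yourself note, broken: fixing $\alpha=1/j\to 0$ cannot produce a blow-up against a finite limit, and the compactness of psh functions on \emph{varying} singular fibres is precisely the delicate point the paper is trying to avoid by working explicitly. So the usable content of your proposal reduces to ``track constants in the Zeriahi proof,'' which is correct in spirit but misses the actual tools (Green functions, Poisson--Szeg\H{o}, Jensen) that make the argument go through.
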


\begin{rmk}
	We mostly use $\CU$ and $\CU'$ in the proof. 
	However, in order to approximate a negative psh function by a decreasing sequence of smooth psh functions, we need to shrink the domain in the very beginning of the proof.
	That is the reason why we choose $\CU$ lying in a bigger pseudoconvex domain $\wCU$. 
\end{rmk}

\begin{proof}
	We proceed in several steps: 
	
	\smallskip
	
	\noindent{\bf Step 0: Good covers.}
	First of all, we may assume that $u_t$ are smooth. 
	It follows indeed from a result of Fornaess--Narasimhan~\cite[Theorem 5.5]{FN_1980}, that one can approximate $u_t$ by a decreasing sequence of smooth non-positive psh functions on $\CU \cap X_t$. 
	We set 
	\[
	v_t := u_t + \rho_t
	\] 
	where $\rho_t = \res{\rho}{\Om_t}$.
	Since the masses of $\bt^n \w [X_t]$ are continuous in $t$, $\Vol(\Om_t,\dd V_t)$ is uniformly bounded by a constant $V' > 0$ up to shrinking $\BD$ (cf. \cite[Section 1.4]{Pan_2022}).
	By adding $\norm{\rho}_{L^\infty(\CU)} V'$ to $C_F$, we can also assume that 
	\[
	\forall u_t \in \CF_t, \quad
	\int_{\Om_t} (-v_t) \dd V_t \leq C_F.
	\]
	Choose a finite collection of balls of radius $2$, $B_{\BC^N}(p,2) \subset \BC^N$, centered at some point $p \in X_0$.
	We denote by $\BB_R := \CX \cap B_{\BC^N}(p,R) \Subset \CU$ for all $R \leq 2$.
	One may assume that the collection of balls of radius $1/2$, $\{\BB_{1/2}\}$, covers $\Om'_t$ for all $t \in \bBD_r$.
	For convenience, in this section, we fix constants $C_\rho, C_\om> 0$ satisfying $C_\rho^{-1} \ddc \rho \leq \ddc |z|^2 \leq C_\rho \ddc \rho$ and $C_\om^{-1} \om \leq \ddc |z|^2 \leq C_\om \om$ on each $\BB_2$

	\smallskip

	\noindent{\bf Step 1: 
		Poisson--Szeg\H{o} inequality.}
	{
		We first recall the following inequality
		\begin{align*}
			v_t(x) &\geq \int_\BB v_t (\ddc G_x)^n \w [X_t]
			= \int_{\BB \cap X_t} v_t (\ddc G_x)^n\\
			&= \underbrace{\int_{\BB \cap X_t} G_x (\ddc v_t) \w (\ddc G_x)^{n-1}}_{:= I_t(x)}
			+ \underbrace{\int_{\pl\BB \cap X_t} v_t \dc G_x \w (\ddc G_x)^{n-1}}_{:= J_t(x)},
		\end{align*}
		where $G_x(z) := \log |\Phi_x(z)|$ and $\Phi_x(z)$ is the automorphism of the unit ball $\BB$ that sends $x$ to the origin.
		The reader is referred to \cite[page 22-23]{DGG2020} for more details. 
	}

	\noindent{\bf Step 2: Control {$J_t$,} $I_t$ and Lelong numbers.}
	{
		Following the same proof in \cite[middle of page 23]{DGG2020}, we have $|J_t| \leq C_F C_1$ for some uniform $C_1 > 0$.
	}
	Now, we are going to treat the other more singular term 
	\[
	I_t(x) = \int_{\BB} G_x (\ddc v_t) \w (\ddc G_x)^{n-1} \w [X_t].
	\]
	In global K\"ahler setting, this part can be controlled by cohomology class of given K\"ahler metrics but it is not the case here.
	The spirit goes back to the local strategy in \cite{Zeriahi_2001} and Chern--Levine--Nirenberg inequality.
	
	\smallskip
	
	Consider the mass of the measure $\ddc v_t \w (\ddc G_x)^{n-1} \w [X_t]$
	\begin{align*}
		\gm_t(x) &:= \int_{\BB} \ddc v_t \w (\ddc G_x)^{n-1} \w [X_t]\\
		&= \underbrace{\int_{D(x,r)} \ddc v_t \w (\ddc G_x)^{n-1} \w [X_t]}_{:=I'_t(x)}
		+ \underbrace{\int_{\BB \setminus D(x,r)} \ddc v_t \w (\ddc G_x)^{n-1} \w [X_t]}_{:= I''_t(x)}
	\end{align*}
	where $D(x,r) := \set{\zt \in \BB}{|\Phi_x(\zt)| < r}$ for some $r \in (0,1)$.
	Note that $\mu_t = \frac{1}{\gm_t} \ddc v_t \w (\ddc G_x)^{n-1} \w [X_t]$ is a probability measure.
	
	\smallskip
	
	From the definition of $G_x(z)$, direct computation provides that 
	\begin{equation}\label{eq:lap green bdd}
		\ddc G_x(z) \leq C_2 \frac{\ddc|z|^2}{|\Phi_x(z)|^{2}}
	\end{equation}
	for some uniform constant $C_2 > 0$.
	Choose a cutoff function $\chi$ supported in $\BB_2$ and satisfying $\chi \equiv 1$ on $\BB$ and $-C_3 \ddc |z|^2 \leq \ddc \chi \leq C_3 \ddc |z|^2$ for some constant $C_3 > 0$.
	Then we have 
	\begin{align*}
		I''_t(x) &\leq \frac{1}{r^{2n-2}} \int_{\BB} \ddc v_t \w (\ddc |z|^2)^{n-1} \w [X_t]\\
		&\leq \frac{1}{r^{2n-2}} \int_{\BB_2} \chi \ddc v_t \w (\ddc |z|^2)^{n-1} \w [X_t]
		= \frac{1}{r^{2n-2}} \int_{\BB_2} v_t \ddc \chi \w (\ddc |z|^2)^{n-1} \w [X_t]\\
		&\leq \frac{C_3}{r^{2n-2}} \int_{\CU} (-v_t) (\ddc |z|^2)^{n} \w [X_t]
		= \frac{C_3}{r^{2n-2}} \int_{\Om_t} (-v_t) (\ddc |z|^2)^{n} 
		\leq \frac{C_3 C_F}{r^{2n-2}}.
	\end{align*}
	Because $|x| < 1/2$ in the setting, one may assume $D(x,r_0) \subset \BB_{3/4}$ for some uniform $r_0 > 0$ sufficiently small. 
	Hence, one get $I''_t(x) \leq \frac{C_3 C_F}{r_0^{2n-2}}$.
	
	\smallskip
	
	Consider cutoffs $(\chi_j)_{j=1}^n$ which are compactly supported on $\BB$ and satisfy:
	$\chi_1 \equiv 1$ on $\BB_{3/4}$, $\supp(\chi_1) \Subset \BB$, $\chi_{j+1} \equiv 1$ on $\supp(\chi_i)$ for every $j \in \{1,...,n-1\}$ and $-C_4 \ddc |z|^2 \leq \ddc\chi_j \leq C_4 \ddc |z|^2$ for some constant $C_4 > 0$.
	Using the trick in Chern--Levine--Nirenberg inequality, one can see that
	{\small
		\begin{align*}
			I'_t(x) &\leq \int_{\BB_{3/4}} \ddc v_t \w (\ddc G_x)^{n-1} \w [X_t]
			\leq \int_{\supp(\chi_1)} \chi_1 \ddc v_t \w (\ddc G_x)^{n-1} \w [X_t] \\
			&\leq \int_{\supp(\chi_1)} G_x \ddc v_t \w \ddc \chi_1 \w (\ddc G_x)^{n-2} \w [X_t]\\
			&\leq C_4 \lt( \sup_{z \in \BB \setminus \BB_{3/4}} |G_x(z)| \rt) \int_{\supp(\chi_1)}  \ddc v_t \w \ddc |z|^2 \w (\ddc G_x)^{n-2} \w [X_t]\\
			&\leq C_4 \lt( \sup_{z \in \BB \setminus \BB_{3/4}} |G_x(z)| \rt) \int_{\supp(\chi_2)}  \chi_2 \ddc v_t \w \ddc |z|^2 \w (\ddc G_x)^{n-2} \w [X_t]\\
			&\leq \cdots\\
			&\leq C_4^{n-1} \lt( \sup_{z \in \BB \setminus \BB_{3/4}} |G_x(z)| \rt)^{n-1} \int_{\supp(\chi_{n-1})} \ddc v_t \w (\ddc |z|^2)^{n-1} \w [X_t]\\
			&\leq C_4^{n} \lt( \sup_{z \in \BB \setminus \BB_{3/4}} |G_x(z)| \rt)^{n-1} \int_{\BB} (-v_t) \w (\ddc |z|^2)^{n} \w [X_t]\\
			&\leq C_4^{n} \lt( \sup_{z \in \BB \setminus \BB_{3/4}} |G_x(z)| \rt)^{n-1} \int_{\Om_t} (-v_t) \w (\ddc |z|^2)^{n}
			\leq C_4^n C_F \lt( \sup_{z \in \BB \setminus \BB_{3/4}} |G_x(z)| \rt)^{n-1}.
	\end{align*}}
	Since $|x| < 1/2$ and $\Phi_x(z)$ moves smoothly in $x$ and $z$, there is a uniform constant $C_5$ such that 
	\[
	\lt(\sup_{z \in \BB \setminus \BB_{3/4}} |G_x(z)|\rt) \leq C_5.
	\]
	Thus, $I'_t$ is uniformly bounded from above by the constant $C_4^n C_F C_5^{n-1}$. 
	
	\smallskip
	
	Estimates of $I'$ and $I''$ yield a constant $\nu = \frac{C_F C_3}{r_0^{2n-2}} + C_F C_4^n C_5^{n-1}$ such that $\gm_t(x)$ is bounded by $\nu$ from above.
	{
		On the other hand, we have the lower bound of $\gm_t \geq C^{-1}_\rho$ by similar computation in \cite[bottom of page 23]{DGG2020}.
	}
	Therefore, we obtain a two-sided bound of $\gm_t$:
	\begin{equation}\label{eq:Lelong bound}
		\forall x \in \BB_{1/2} \,\,\text{and}\,\, \forall t \in \overline{\BD}_{r}, \quad
		C_\rho^{-1} \leq \gm_t(x) \leq \nu.
	\end{equation}

	\noindent{\bf Step 3: Conclusion.}
	{
		We closely follow the strategy in \cite[page 24-25]{DGG2020} to conclude.
	}
	Combining (\ref{eq:lap green bdd}), (\ref{eq:Lelong bound}) and Jensen's inequality, we derive
	\begin{align*}
		\e^{-\af I_t(x)}
		&= \exp\lt( \int_{z \in \BB} -\af \gm_t(x) G_x \dd\mu_t \rt)
		\leq C_\rho \int_{z \in \BB} \frac{\ddc v_t \w (\ddc |z|^2)^{n-1} \w [X_t]}{|\Phi_x(z)|^{\af \nu + 2n -2}} .
	\end{align*}
	Integrating $x \in \BB_{1/2}$ and using Fubini's theorem, one can infer the following inequality
	\begin{align*}
		&\int_{x \in \BB_{1/2}} \e^{-\af u_t} (\ddc |x|^2)^n \w [X_t]
		\leq \int_{x \in \BB_{1/2}} \e^{-\af v_t} (\ddc |x|^2)^n \w [X_t]\\
		&\leq \e^{\af C_FC_1} C_\rho \int_{z \in \BB} 
		\lt(\int_{x \in \BB_{1/2}} 
		\frac{(\ddc |x|^2)^n \w [X_t]}{|\Phi_x(z)|^{\af \nu + 2n -2}}\rt)
		\ddc v_t(z) \w (\ddc |z|^2)^{n-1} \w [X_t]
	\end{align*}
	Fix a constant $\af$ sufficiently small such that $\af \nu < 2$ and put $\bt = \frac{2-\af\nu}{2n} > 0$.
	Following the similar proof of \cite[Lemma 2.13]{DGG2020},
	we have
	\[
	C_\bt^{-1} (\ddc_x |\Phi_x(z)|^{2\bt})^n
	\leq \frac{(\ddc|x|^2)^n}{|\Phi_x(z)|^{\af\nu + 2n -2}} 
	\leq C_\bt (\ddc_x |\Phi_x(z)|^{2\bt})^n.
	\]
	Now, using the same trick in the proof of Chern--Levine--Nirenberg inequality, one has
	\begin{align*}
		\int_{x \in \BB_{1/2}} \frac{(\ddc|x|^2)^n \w [X_t]}{|\Phi_x(z)|^{\af\nu + 2n -2}}
		&\leq C_\bt \int_{x \in \BB_{1/2}} (\ddc_x |\Phi_x(z)|^{2\bt})^n \w [X_t]\\
		&\leq C_\bt \int_{x \in \supp(\chi_1)} \chi_1 (\ddc_x |\Phi_x(z)|^{2\bt})^n \w [X_t]\\
		&\leq C_4^n C_\bt \int_{x \in \BB} (\ddc |x|^2)^n \w [X_t]
		\leq C_4^n C_\om^n C_\bt \Vol_{\om_t}(\Om_t).
	\end{align*}
	{
		Note that in global K\"ahler cases, the above estimate is controlled by cohomology classes.}
	Finally, we get the estimate on each $\BB_{1/2}$
	{\small
		\begin{align*}
			\int_{x \in \BB_{1/2}} \e^{-\af u_t} (\ddc |x|^2)^n \w [X_t]
			&\leq \e^{\af C_F C_1} \int_{z \in \BB} \lt(C_4^n C_\om^n C_\bt \Vol_{\om_t}(\Om_t)\rt) \ddc v_t \w (\ddc |z|^2)^{n-1} \w [X_t]\\
			&\leq \e^{\af C_F C_1} C_4^n C_\om^n C_\bt \Vol_{\om_t}(\Om_t) 
			\int_{\BB_2} \chi \ddc v_t \w (\ddc |z|^2) \w [X_t]\\
			&\leq \e^{\af C_F C_1} C_4^{n+1} C_\om^n C_\bt \Vol_{\om_t}(\Om_t) 
			\int_{\Om_t} (-v_t) (\ddc|z|^2)^n\\
			&\leq \e^{\af C_F C_1} C_F C_4^{n+1} C_\om^n C_\bt \Vol_{\om_t}(\Om_t).
	\end{align*}}
	{
		Note that $t \mapsto \Vol_{\om_t} (\Om_t)$ is continuous.
		One has a uniform control $\int_{x \in \BB_{1/2}} \e^{-\af u_t} (\ddc |x|^2)^n \w [X_t]$ for all $t$ close to $0$.
	}
	Summing the integration on every $\BB_{1/2}$ in the collection, we obtain the estimate in Theorem~\ref{thm:local_Skoda_in_family} as desired.
\end{proof}

\subsection{Global Skoda's estimate}
Now, we assume that there is a uniform constant $C_{SL} > 0$ such that $X_t$ satisfies \ref{bigasm:SL} for all $t \in \BD$.
As a consequence, we have the following uniform global version of Skoda's estimate:

\begin{prop}\label{prop:global_Skoda}
	Assume that there is a uniform constant $C_{SL} > 0$ such that for all $t \in \BD$ and for every $u_t \in \PSH(X_t,\om_t)$ with $\sup_{X_t} = 0$,
	\[
	\frac{1}{V_t} \int_{X_t} (-u_t) \om_t^n \leq C_{SL}
	\]
	where $V_t := \Vol_{\om_t} (X_t)$.
	Then there exists constants $\af$, $A_\af$ such that for all $t \in \overline{\BD}_{1/2}$ and for all $u_t \in \PSH(X_t, \om_t)$ with $\sup_{X_t} u_t = 0$,
	\[
	\int_{X_t} \e^{-\af u_t} \om_t^n \leq A_\af.
	\]
\end{prop}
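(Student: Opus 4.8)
The plan is to reduce the global statement to the local uniform Skoda estimate of Theorem~\ref{thm:local_Skoda_in_family} via a finite cover of $\CX$ by strongly pseudoconvex charts. The only feature that distinguishes the hermitian case from the K\"ahler one treated in \cite{DGG2020} is that $\om_t$ admits no global $\ddc$-potential on such a chart; I would circumvent this by using the defining function of a strictly pseudoconvex domain as a \emph{quasi-potential}, at the price of slightly enlarging each chart --- which is precisely why Theorem~\ref{thm:local_Skoda_in_family} is phrased with a pair $\CU \Subset \wCU$.

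Concretely, after shrinking $\BD$ I would cover $\CX$ by finitely many strongly pseudoconvex domains $\CU_j = \{\rho_j < 0\}$, each relatively compact in a larger strongly pseudoconvex $\wCU_j = \{\wrho_j < 0\}$ with $\pi(\wCU_j) \Subset \BD$, together with subdomains $\CU_j' = \{\rho_j < -c_j\} \Subset \CU_j$ whose slices $\Om_{j,t}' := \CU_j' \cap X_t$ already cover $X_t$ for every $t \in \overline{\BD}_{1/2}$; set $\Om_{j,t} := \CU_j \cap X_t$ and $\wOm_{j,t} := \wCU_j \cap X_t$. Fix $\ld_j > 0$ with $\ld_j \ddc \wrho_j \geq \om$ near $\overline{\wCU_j}$. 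Given $u_t \in \PSH(X_t,\om_t)$ with $\sup_{X_t} u_t = 0$, hence $u_t \leq 0$, the candidate local functions are $w_{j,t} := \ld_j \wrho_j + u_t$ on $\wOm_{j,t}$: one checks $\ddc w_{j,t} = \ld_j\ddc\wrho_j + \ddc u_t \geq \ld_j\ddc\wrho_j - \om_t \geq 0$, so $w_{j,t}$ is psh there, and $w_{j,t} < 0$ since $\wrho_j < 0$ and $u_t \leq 0$. From $-w_{j,t} \leq \ld_j \norm{\wrho_j}_{L^\infty} + (-u_t)$, the sup-$L^1$ hypothesis $\int_{X_t}(-u_t)\om_t^n \leq C_{SL} V_t$, and the uniform upper bound on $V_t = \Vol_{\om_t}(X_t)$ coming from continuity of the volumes of the slices, one obtains a constant $C_{F,j}$ --- independent of both $t$ and $u_t$ --- with $\int_{\Om_{j,t}}(-w_{j,t})\om_t^n \leq C_{F,j}$.

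Theorem~\ref{thm:local_Skoda_in_family}, applied on each chart to the family $\{w_{j,t}\}_{t}$ with $C_F = C_{F,j}$, then yields $\af_j, A_{\af_j} > 0$ such that $\int_{\Om_{j,t}'} \e^{-\af_j w_{j,t}}\om_t^n \leq A_{\af_j}$ for all $t$ in a fixed neighbourhood of $0$ containing $\overline{\BD}_{1/2}$; since $w_{j,t} \leq u_t$, this already gives $\int_{\Om_{j,t}'}\e^{-\af_j u_t}\om_t^n \leq A_{\af_j}$. Putting $\af := \min_{1 \leq j \leq N}\af_j$, using $u_t \leq 0$ to replace $\af_j$ by $\af$ in each integral, and summing over the finite cover produces $\int_{X_t}\e^{-\af u_t}\om_t^n \leq \sum_{j=1}^N A_{\af_j} =: A_\af$ for all $t \in \overline{\BD}_{1/2}$, which is the assertion. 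I do not expect a genuine obstacle at the level of this proposition: all the analytic difficulty is packed into Theorem~\ref{thm:local_Skoda_in_family}, and what remains is the bookkeeping of a finite cover --- the two points needing mild care being that a single cover must be chosen so as to work uniformly over $\overline{\BD}_{1/2}$ (achievable after shrinking $\BD$) and that the quasi-potential $\ld_j\wrho_j$ must be checked to be compatible with the hypotheses of the local theorem.
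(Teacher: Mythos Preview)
The proposal is correct and follows essentially the same approach as the paper: both reduce to Theorem~\ref{thm:local_Skoda_in_family} via a finite strongly pseudoconvex double cover, turn $u_t$ into a genuinely psh function on each chart by adding a multiple of the defining function, feed the sup-$L^1$ hypothesis into the $C_F$-bound, and then take $\af=\min_j\af_j$ and sum. Your version is, if anything, slightly more careful than the paper's in that you explicitly work on the enlarged domain $\wCU_j$ and use $\wrho_j$ as the quasi-potential, which matches the hypothesis of Theorem~\ref{thm:local_Skoda_in_family} that the psh functions be defined on $\wOm_t$; the paper uses $\rho_j$ on $\CU_j$ and leaves that point implicit.
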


\begin{proof}
	Without loss of generality, we just treat the proof for $t$ in a small neighborhood near $0 \in \BD$.
	Let $(\CU_j)_{j \in J}$ and $(\CU_j')_{j \in J}$ be a strongly pseudoconvex finite double cover of $\pi^{-1}(\BD_r)$ for some $r>0$ sufficiently small. 
	We write $\CU_j' := \{\rho_j < -c_j\} \Subset \CU_j = \{\rho_j < 0\}$ for some $c_j > 0$.
	For simplicity, we may assume that $\ddc \rho_j \geq \om$ for all $j \in J$.
	Also, we set the slices $\Om_{t,j} := X_t \cap \CU_j$ and $\Om_{t,j}' := X_t \cap \CU_j'$.
	For all $u_t \in \PSH(X_t, \om_t)$ with $\sup_{X_t} u_t = 0$, it is obvious that $u_t + \rho_{t,j} \in \PSH(\Om_{t,j})$.
	Note that for all $j \in J$,
	\[
	\int_{\Om_{t,j}} -(u_t + \rho_{t,j}) \om_t^n 
	\leq \int_{X_t} -u_t \om_t^n + V_t \norm{\rho_j}_{L^\infty(\CU_j)}
	\leq C_F
	\]
	for some uniform constant $C_F$.
	By Theorem~\ref{thm:local_Skoda_in_family}, we obtain
	\[
	\forall j \in J, \quad
	\int_{\Om_{t,j}'} \e^{-\af_j u_t} \om_t^n 
	\leq \int_{\Om_{t,j}'} \e^{-\af_j (u_t + \rho_{t,j})} \om_t^n 
	\leq A_{\af_j, j}.
	\]
	Since $J$ is a finite set of indices, one can easily derive the desired estimate.
\end{proof}
This ensures that one can find uniform geometric constants~\ref{bigasm:Skoda} in a family which fulfills Conjecture~\ref{conj:SL}.

\section{Uniform volume-capacity comparison}\label{sec:vol_cap}
This section aims to deal with the volume-capacity comparison~\ref{bigasm:VC_comparison} in a family $\pi: \CX \ra \BD$ with locally irreducible fibres.
Let $\CU = \{\rho < 0\}$ be a strongly pseudoconvex domain in $\CX$. 
We also assume that the closure of $\CU$ can be contained in a larger strongly pseudoconvex set $\wCU = \{\wrho < 0\}$ in $\CX$. 
Denote the slices by $\Om_t := \CU \cap X_t$ and $\wOm_t := \wCU \cap X_t$.

\subsection{Subextensions and relative extremal functions}
We first recall some useful facts in pluripotential theory:

\smallskip

\subsubsection{Subextensions}
Fix a strongly pseudoconvex domain $\Om = \{\rho < 0\}$ and a strongly pseudoconvex neighborhood $\wOm=\{\wrho < 0\}$ containing $\overline{\Om}$.
We define
\[
\CE^0(\Om) 
:= \set{u \in \PSH(\Om) \cap L^\infty(\Om)}{ 
	\res{u}{\pl\Om} = 0 
	\text{ and } 
	\int_{\Om} (\ddc u)^n < +\infty}.
\]
We recall some properties of subextension of the plurisubharmonic functions in $\CE^0(\Om)$.
\begin{lem}[{\cite[Theorem 2.2]{Cegrell_Zeriahi_2003} and \cite[Lemma 1.7]{Guedj_Guenancia_Zeriahi_2020}}]\label{lem:subextension}
	Suppose that $\vph \in \CE^0(\Om)$. 
	The subextension of $\vph$ is defined as: 
	\[
	\wvph := \sup \set{u \in \PSH(\wOm)}{\res{u}{\pl\wOm} \leq 0 \text{ and } u \leq \vph \text{ in } \Om}. 
	\]
	Then the subextension $\wvph$ satisfies the following properties: 
	\begin{enumerate}
		\item $\wvph \in \CE^0(\wOm)$;
		\item $\wvph \leq \vph$ on $\Om$;
		\item $\int_{\wOm} (\ddc \wvph)^n \leq \int_\Om (\ddc \vph)^n$.
	\end{enumerate}
\end{lem}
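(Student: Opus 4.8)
This is \cite[Theorem 2.2]{Cegrell_Zeriahi_2003} combined with \cite[Lemma 1.7]{Guedj_Guenancia_Zeriahi_2020}; here is the strategy I would follow. The plan is to realise $\wvph$ as the largest element of the family
\[
    \CF := \set{u \in \PSH(\wOm)}{\res{u}{\pl\wOm} \leq 0 \text{ and } u \leq \vph \text{ on } \Om}.
\]
First I would check that $\CF$ is non-empty and locally uniformly bounded from above. For a lower barrier, since $\overline{\Om} \Subset \wOm$ one has $\wrho \leq -\dt$ on $\overline{\Om}$ for some $\dt>0$, hence $A\wrho \in \CF$ once $A$ is large enough (using that $\vph \in L^\infty(\Om)$); for an upper bound, the maximum principle applied on $\wOm \setminus \overline{\Om}$ and on $\Om$ (where $\vph \leq 0$, as $\vph \in \CE^0(\Om)$) gives $u \leq 0$ on $\wOm$ for every $u \in \CF$. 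Consequently $\wvph := (\sup_{u \in \CF} u)^\ast$ is $\PSH$ on $\wOm$, it equals the naive supremum off a pluripolar set by the Brelot--Cartan negligible-sets theorem, and one checks directly that $\wvph \in \CF$ (the bound $\wvph \leq \vph$ on $\Om$ is just monotonicity of the usc regularisation together with $\vph$ being usc); thus $\wvph$ is the maximal element of $\CF$. This already gives property (ii), and the boundary condition in (i): $\res{\wvph}{\pl\wOm} \leq 0$ since $\wvph \leq 0$, and $\geq 0$ since $\wvph \geq A\wrho \to 0$ at $\pl\wOm$.

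It remains to bound $\int_{\wOm}(\ddc\wvph)^n$ and to prove (iii), which also completes (i). The key point is that $\wvph$ is a maximal $\PSH$ function on the open set $\wOm \setminus K$, where $K := \set{z \in \Om}{\wvph(z) = \vph(z)}$: on $\wOm \setminus \overline{\Om}$ there is no constraint coming from $\vph$, and on the contact-free part of $\Om$ one can locally raise $\wvph$ by a small psh bump without leaving $\CF$, contradicting maximality. Hence $(\ddc\wvph)^n$ is carried by $K$. On $K$ one compares the Monge--Amp\`ere measures of the ordered pair $\wvph \leq \vph$: at a coincidence point $\vph - \wvph$ attains a minimum, so its complex Hessian is semipositive there, i.e. $\ddc\wvph \leq \ddc\vph$, whence $\1_K (\ddc\wvph)^n \leq \1_K (\ddc\vph)^n$ (the rigorous statement for merely bounded $\PSH$ functions being a Bedford--Taylor-type lemma on coincidence sets, Alexandrov's a.e.\ second differentiability handling the absolutely continuous part). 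Since $\int_\Om (\ddc\vph)^n < +\infty$ by definition of $\CE^0(\Om)$, this yields
\[
    \int_{\wOm}(\ddc\wvph)^n = \int_K (\ddc\wvph)^n \leq \int_K (\ddc\vph)^n \leq \int_\Om (\ddc\vph)^n < +\infty,
\]
which is (iii) and shows $\wvph \in \CE^0(\wOm)$.

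The main obstacle is the behaviour of $(\ddc\wvph)^n$ along $\pl\Om$ and, relatedly, the balayage step when $\vph$ is not continuous: a priori a bounded $\PSH$ function may carry singular mass on a real hypersurface, and the non-contact set of $\Om$ need not be open. I would deal with both at once by an exhaustion argument: approximate $\vph$ by a decreasing sequence of functions $\vph_j \in \CE^0(\Om) \cap \CC^0(\overline{\Om})$ with uniformly bounded Monge--Amp\`ere masses (and, if needed, shrink $\Om$ to smooth strongly pseudoconvex subdomains $\Om_j \nearrow \Om$); for continuous $\vph_j$ the argument above is clean because the envelope is continuous and its coincidence set is a \emph{compact} subset of $\Om$ (it cannot touch $\pl\Om$, since $\wvph_j < 0$ on $\pl\Om$ whereas $\vph_j \to 0$ there, using connectedness of $\wOm$), so there is genuinely no mass on $\pl\Om$; then one passes to the limit using the stability of subextensions and the appropriate convergence of Monge--Amp\`ere measures under monotone limits. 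Finally, in the possibly singular ambient setting all computations are carried out on the regular locus, using — exactly as elsewhere in the paper — that the Monge--Amp\`ere measure of a bounded $\PSH$ function puts no mass on the singular set.
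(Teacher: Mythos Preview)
The paper does not give its own proof of this lemma: it simply records the statement and cites \cite[Theorem 2.2]{Cegrell_Zeriahi_2003} and \cite[Lemma 1.7]{Guedj_Guenancia_Zeriahi_2020}. Your sketch is a faithful reconstruction of the argument in those references: the envelope construction with the barrier $A\wrho$, the balayage/maximality step showing that $(\ddc\wvph)^n$ is supported on the contact set $K \subset \Om$, and the Bedford--Taylor comparison $\1_{\{\wvph=\vph\}}(\ddc\wvph)^n \leq \1_{\{\wvph=\vph\}}(\ddc\vph)^n$ (which follows from the Demailly inequality $(\ddc\max(u,v))^n \geq \1_{\{u\geq v\}}(\ddc u)^n + \1_{\{u<v\}}(\ddc v)^n$ applied with $u=\wvph \leq v=\vph$) are exactly the ingredients of the original proof. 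Your handling of the boundary issue at $\pl\Om$ via continuous approximants and the remark that in the singular setting everything is computed on the regular locus are also in line with how \cite{Guedj_Guenancia_Zeriahi_2020} adapts the argument. So there is nothing to compare against beyond the cited sources, and your sketch matches them.
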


\smallskip

\subsubsection{Relative extremal functions}
Next, we review the definition and some basic properties of relative extremal functions (cf. \cite[Chapter 3]{GZbook}):

\begin{defnprop}\label{defnprop:relative_extremal_function}
	Let $E$ be a Borel subset in a strongly pseudoconvex domain $\Om$.
	The relative extremal function with respect to $(E,\Om)$ is defined as follows
	\[
	h_{E;\Om}(z) = \sup \set{u(z)}{u \in \PSH(\Om),\, u \leq 0 \text{ and } u|_E \leq -1}.
	\]
	Suppose that $E$ is a relatively compact Borel subset in $\Om$.
	Then we have the following facts:
	\begin{enumerate}
		\item The function $h^\ast_{E;\Om}$ is psh in $\Om$ and $h^\ast_{E;\Om}= -1$ on $E$ off a pluripolar subset;
		\item The boundary value of $h^\ast_{E;\Om}$ is zero (i.e. $\lim_{z \ra \pl\Om} h^\ast_{E;\Om}(z) = 0$);
		\item The Monge--Amp\`ere measure $(\ddc h^\ast_{E;\Om})^n$ puts no mass on $\Om \setminus \ov{E}$;
		\item The capacity can be expressed by the integration of Monge--Amp\`ere measure of the relative extremal function: $\CAP(E;\Om) = \int_\Om (\ddc h^\ast_{E;\Om})^n$.
	\end{enumerate}
\end{defnprop}

\smallskip

\subsection{Volume-capacity comparison}
Then we explain the volume-capacity comparison in families:

\begin{prop}\label{prop:local_VC_comparison_in_family}
	For every $k>1$, there exists a constant $C_{VC,k}$ such that 
	\[
	\forall K_t \Subset \Om_t, \quad
	\Vol(K_t) \leq C_{VC,k} \CAP^k (K_t;\Om_t),
	\]
	for all $t \in \BD_{1/2}$
\end{prop}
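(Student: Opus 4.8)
The goal is a volume–capacity comparison that is uniform in $t\in\BD_{1/2}$, refining the fixed-domain statement \ref{bigasm:VC_comparison}. The strategy is the one underlying Ko{\l}odziej-type comparisons: bound the volume of a compact $K_t\Subset\Om_t$ by a power of $\CAP(K_t;\Om_t)$ using the relative extremal function $h^\ast_{K_t;\Om_t}$, a subextension argument to pass to a \emph{fixed} ambient domain, and the uniform local Skoda estimate (Theorem~\ref{thm:local_Skoda_in_family}). Concretely, I would proceed as follows.

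\emph{Step 1: Reduce to the extremal function.} Fix $K_t\Subset\Om_t$ and set $h_t:=h^\ast_{K_t;\Om_t}\in\CE^0(\Om_t)$. By Definition and Proposition~\ref{defnprop:relative_extremal_function}, $h_t=-1$ on $K_t$ off a pluripolar set and $\CAP(K_t;\Om_t)=\int_{\Om_t}(\ddc h_t)^n$. Hence
\[
    \Vol(K_t)\leq \int_{\{h_t\leq -1/2\}}\dd V_t
    \leq 2^{\af}\,\e^{\af/2}\int_{\Om_t}\e^{-\af h_t}\,\dd V_t
\]
for any $\af>0$, by Chebyshev. So it suffices to bound $\int_{\Om_t}\e^{-\af h_t}\,\dd V_t$ by a (uniform) power of $\CAP(K_t;\Om_t)$.

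\emph{Step 2: Subextend to a fixed domain and apply uniform Skoda.} Use Lemma~\ref{lem:subextension}: let $\widetilde h_t\in\CE^0(\wOm_t)$ be the subextension of $h_t$ to the larger slice $\wOm_t$, so $\widetilde h_t\leq h_t$ on $\Om_t$, $\widetilde h_t$ is negative $\PSH$ on $\wOm_t$, and $\int_{\wOm_t}(\ddc\widetilde h_t)^n\leq \int_{\Om_t}(\ddc h_t)^n=\CAP(K_t;\Om_t)$. The key point: by B{\l}ocki's integration-by-parts estimate (as used in Theorem~\ref{thm:Kolodziej}) one controls $\int_{\wOm_t}(-\widetilde h_t)\,\dd V_t$ linearly by $\norm{\widetilde h_t}_{L^\infty}^{?}$ times the Monge--Amp\`ere mass; but a cleaner route is: since $\widetilde h_t\geq -1$ is in fact \emph{bounded below by $-C$} uniformly (because $h_t\geq -1$ everywhere and the subextension inherits the lower bound up to a constant depending only on $\wrho$, $\rho$ on the pseudoconvex domains, which are fixed after shrinking $\BD$), the family $\{\widetilde h_t/C\}$ lies in a set of negative $\PSH$ functions on $\wOm_t$ with uniformly bounded $L^1(\dd V_t)$ norm. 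Theorem~\ref{thm:local_Skoda_in_family} then yields uniform constants $\af,A_\af>0$ with $\int_{\Om_t'}\e^{-\af\widetilde h_t}\,\dd V_t\leq A_\af$. After replacing $\Om_t$ by a slightly smaller slice $\Om_t'$ (harmless: any $K_t\Subset\Om_t$ sits in some $\Om_t'$, and one covers by finitely many such pairs as in the proof of Theorem~\ref{bigthm:uniform_a_priori_estimate}), one gets $\int_{\Om_t}\e^{-\af h_t}\,\dd V_t\leq \int_{\Om_t}\e^{-\af\widetilde h_t}\,\dd V_t\leq A_\af$.

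\emph{Step 3: Upgrade $L^\infty$-bounds on $\e^{-\af h_t}$ to a genuine power of capacity.} A uniform bound $\int_{\Om_t}\e^{-\af h_t}\,\dd V_t\leq A_\af$ alone only gives $\Vol(K_t)\leq C$, not the needed decay as $\CAP(K_t;\Om_t)\to 0$. To recover the power $k$, run the classical trick: since $\|h_t\|_{L^\infty}\leq 1$, one has $\int_{\Om_t}(-h_t)^r\,\dd V_t\leq \int_{\Om_t}(-h_t)\,\dd V_t$ for $r\geq 1$, and by B{\l}ocki's estimate $\int_{\Om_t}(-h_t)\,\dd V_t\leq C_\rho\, n!\,\norm{\rho_t}_{L^\infty}^n\,\CAP(K_t;\Om_t)$ with $\norm{\rho_t}_{L^\infty}$ uniformly bounded. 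Feeding this into the Chebyshev/Cauchy--Schwarz estimate of $\Vol(\{h_t\leq -s\})$ exactly as in the proof of Theorem~\ref{thm:Kolodziej} (using the uniform Skoda bound to control a cross term), and optimizing in $s$, produces $\Vol(K_t)\leq C_{VC,k}\,\CAP(K_t;\Om_t)^k$ for every prescribed $k>1$, with $C_{VC,k}$ depending only on $k$, $n$, $p$, the fixed pseudoconvex data $\rho,\wrho$, the uniform volume bound $V'$, and the uniform Skoda constants $\af, A_\af$ — all of which are $t$-independent after shrinking $\BD$.

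\emph{Main obstacle.} The delicate point is Step 2--3: ensuring that every constant that enters (the subextension lower bound, the B{\l}ocki integration-by-parts constant $C_\rho$, the Skoda pair $(\af,A_\af)$, the volume $\Vol(\Om_t,\dd V_t)$) is genuinely uniform in $t$. The first three are fixed once $\rho,\wrho$ are chosen on $\CU\Subset\wCU\subset\CX$ and $\BD$ is shrunk; the volume uniformity follows from continuity of the masses $\bt^n\wedge[X_t]$ as in \cite[Section 1.3]{Pan_2021}. The only genuinely new input beyond the fixed-domain argument of \cite{Kolodziej_1998, Guedj_Guenancia_Zeriahi_2020} is the uniform local Skoda estimate, which Theorem~\ref{thm:local_Skoda_in_family} supplies; everything else is a careful bookkeeping of the proof of Theorem~\ref{thm:Kolodziej} with $f$ replaced by the constant density and $\vph$ by $h_t$.
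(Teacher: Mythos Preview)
Your proposal has a genuine gap: the missing idea is to work not with $h_t:=h^\ast_{K_t;\Om_t}$ itself, but with its \emph{normalization}
\[
u_{K_t}:=\frac{h^\ast_{K_t;\Om_t}}{\CAP^{1/n}(K_t;\Om_t)}.
\]
This is the crux of the paper's argument. The point is that $u_{K_t}\in\CE^0(\Om_t)$ has Monge--Amp\`ere mass exactly $1$, so its subextension $\wu_{K_t}$ to $\wOm_t$ has mass $\leq 1$, and B{\l}ocki's estimate then yields a uniform $L^1$ bound $\int_{\wOm_t}(-\wu_{K_t})\,\dd V_t\leq C$ \emph{independent of $K_t$}. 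Theorem~\ref{thm:local_Skoda_in_family} now applies to the family $\{\wu_{K_t}\}$ and gives $\int_{\Om_t}\e^{-\af\wu_{K_t}}\,\dd V_t\leq A_\af$. Since $\wu_{K_t}\leq u_{K_t}=-1/\CAP^{1/n}(K_t;\Om_t)$ on $K_t$ (a.e.), one obtains
\[
\Vol(K_t)\cdot\exp\!\lt(\frac{\af}{\CAP^{1/n}(K_t;\Om_t)}\rt)\leq A_\af,
\]
and the elementary bound $\e^{-1/x}\leq b_k x^{kn}$ finishes the proof for any $k>1$.

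Your Steps 2--3 cannot reach $k>1$. In Step~2, since $h_t\in[-1,0]$, the subextension $\widetilde h_t$ also satisfies $\widetilde h_t\geq -1$ (the constant $-1$ is a competitor), so $\e^{-\af\widetilde h_t}\leq \e^\af$ trivially and the Skoda bound carries no information about $K_t$. In Step~3, B{\l}ocki's estimate gives $\int_{\Om_t}(-h_t)^n\,\dd V_t\leq C\cdot\CAP(K_t;\Om_t)$, and Chebyshev at level $-1$ then yields only the linear bound $\Vol(K_t)\leq C\cdot\CAP(K_t;\Om_t)$. There is no room to ``optimize in $s$'': the sublevel sets $\{h_t\leq -s\}$ are empty for $s>1$, so the De~Giorgi iteration you invoke from Theorem~\ref{thm:Kolodziej} has nothing to iterate on. The normalization is precisely what converts the bounded function $h_t$ into an unbounded one whose depth on $K_t$ encodes the capacity, so that Skoda's \emph{exponential} integrability translates into decay faster than any power.
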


In smooth setting, the proof of the volume-capacity comparison was first given by Ko{\l}odziej~\cite{Kolodziej_2002}.
For global semi-positive setup, Guedj and Zeriahi \cite{Guedj_Zeriahi_2005} provided a proof simply using Skoda's integrability theorem and the comparison of Bedfor--Taylor and Alexander--Taylor capacities. 
For local cases, in \cite[Lemma 1.9]{Guedj_Guenancia_Zeriahi_2020}, Guedj--Guenancia--Zeriahi gave an easy proof for us to chase the depending constants. 
We also compute explicitly the constant $C_{VC,k}$ to verify that it is independent of $t$.

\begin{proof}
	Without loss of generality, we may assume that $K_t$ is non-pluripolar.
	Otherwise, both sides of the inequality are zeros.
	We define
	\[
	u_{K_t} = \frac{h^\ast_{K_t;\Om_t}}{\CAP^{1/n}(K_t;\Om_t)}.
	\]
	According to Proposition~\ref{defnprop:relative_extremal_function}, one has $u_{K_t} \in \CE^0(\Om_t)$ and $\int_{\Om_t} (\ddc u_{K_t})^n = 1$.
	Recall that $\wCU = \{\wrho < 0\}$ is a strongly pseudoconvex neighborhood of $\overline{\CU}$ and $\wOm_t = \wCU \cap X_t$.
	Let $C_{\wrho} > 0$ be a constant such that $\dd V \leq C_{\wrho} (\ddc \wrho)^n$ on $\wCU$. 
	Consider the subextension of $u_{K_t}$:
	\[
	\wu_{K_t} = \sup\set{u \in \PSH(\wOm_t) \cap L^\infty(\wOm_t)}{\res{u}{\pl\wOm_t} \leq 0 \text{ and } u \leq u_{K_t} \text{ in }\Om_t}.
	\]
	By Lemma~\ref{lem:subextension}, we have $\wu_{K_t} \in \CE^0(\wOm_t)$, $\wu_{K_t} \leq u_{K_t}$ in $\Om_t$, and $\int_{\wOm_t} (\ddc \wu_{K_t})^n \leq \int_{\Om_t} (\ddc u_{K_t})^n = 1$.
	Using the integration by parts and the condition $\int_{\wOm_t} (\ddc \wu_{K_t})^n \leq 1$, one can see that $\norm{\wu_{K_t}}_{L^1(\wOm_t)}$ is uniformly bounded independent of $K_t$.
	Indeed, 
	\begin{align*}
		\int_{\wOm_t} (-\wu_{K_t}) \dd V
		&\leq \Vol^{\frac{n-1}{n}}(\wOm_t) \lt( \int_{\wOm_t} (-\wu_{K_t})^n \dd V \rt)^{1/n}\\
		&\leq \Vol^{\frac{n-1}{n}}(\wOm_t) C_\wrho \lt( \int_{\wOm_t} (-\wu_{K_t})^n (\ddc \wrho_t)^n \rt)^{1/n}\\
		&\leq \Vol^{\frac{n-1}{n}}(\wOm_t) C_\wrho \norm{\wrho_t}_{L^\infty(\wOm_t)} 
		\lt( \int_{\wOm_t} (\ddc \wu_{K_t})^n \rt)^{1/n}
		\leq \Vol^{\frac{n-1}{n}}(\wOm_t) C_\wrho \norm{\wrho}_{L^\infty(\wCU)}.
	\end{align*}
	According to Theorem~\ref{thm:local_Skoda_in_family}, there exists constants $\af, A_\af > 0$ such that for all $K_t \Subset \Om_t$ non-pluripolar,
	\[
	\int_{\Om_t} \e^{-\af \wu_{K_t}} \dd V \leq A_\af.
	\]
	Recall that $h_{K_t}^\ast = -1$ on $K_t$ almost everywhere.
	By the definition of $u_{K_t}$ and $\wu_{K_t} \leq u_{K_t}$, we have
	\[
	\Vol(K_t) \cdot \exp\lt( \frac{\af}{\CAP^{1/n}(K_t;\Om_t)} \rt)
	= \int_{K_t} \e^{- \af u_{K_t}} \dd V
	\leq \int_{\Om_t} \e^{- \af u_{K_t}} \dd V
	\leq A_\af
	\]
	and this implies
	\[
	\Vol(K_t) \leq A_\af \exp\lt( -\frac{\af}{\CAP^{1/n}(K_t;\Om_t)} \rt)
	\leq A_\af \frac{b_k}{\af^{kn}} \CAP^k(K_t;\Om_t).
	\]
	where $b_k$ is a numerical constant such that $\exp(-1/x) \leq b_k x^{kn}$ for all $x > 0$.
\end{proof}

\subsection{Global volume-capacity comparison}
In this section, we show the uniform volume-capacity comparison in a given family of compact hermitian varieties $\pi: (\CX, \om) \ra \BD$ with locally irreducible fibres. 

Let $(X,\om)$ be a compact hermitian variety.
We define the similar concept of Monge--Amp\`ere capacity with respect to $\om$-psh functions by
\[
\CAP_\om(K) := \sup\set{\int_K (\om + \ddc u)^n}{u \in \PSH(X,\om), \text{ and } 0 \leq u \leq 1}.
\]
On the other hand, fixing a pseudoconvex finite double cover $(\Om_j)_j$ and $(\Om'_j)_j$ of $X$ such that $\Om'_j \Subset \Om_j$, we define the Bedford--Taylor capacity by 
\[
\CAPBT(K) := \sum_{j} \CAP(K \cap \overline{\Om_j'}; \Om_j).
\]
To prove the global volume-capacity comparison, we should first compare the Bedford--Taylor capacity and the capacity of $\om$-psh functions.

\begin{lem}\label{lem:loc_global_cap_comparison}
	There exists a constant $C_{BT,\om} > 0$ such that 
	\[
	\forall \text{ compact subset }K_t \subset X_t, \quad 
	C_{BT,\om}^{-1} \CAPBT(K_t) \leq \CAP_{\om_t} (K_t) \leq C_{BT,\om} \CAPBT(K_t)
	\]
	for all $t \in \overline{\BD}_{1/2}$.
\end{lem}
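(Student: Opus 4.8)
The plan is to prove the two halves of the comparison by a localisation argument and then to observe that every constant that appears is extracted from the finite collection $(\rho_j)$ and from $\om$ over the compact set $\pi^{-1}(\overline{\BD}_{1/2})$, hence is independent of $t$. For the inequality $\CAP_{\om_t}(K_t)\le C_{BT,\om}\,\CAPBT(K_t)$, I would fix constants $C_j>0$ with $C_j\ddc\rho_j\ge\om$ on a neighbourhood of $\overline{\CU}_j$ (possible since $\rho_j$ is strictly psh there and the relevant part of $\CX$ is compact), so that $C_j\ddc\rho_{t,j}\ge\om_t$ on $\overline{\Om}_{t,j}$ for every $t$. For $u\in\PSH(X_t,\om_t)$ with $0\le u\le1$ the function $w_j:=C_j\rho_{t,j}+u$ is psh on $\Om_{t,j}$, is bounded between $-C_j\norm{\rho_j}_{L^\infty}$ and $1$, and satisfies $\om_t+\ddc u\le\ddc w_j$; by monotonicity of the Monge--Amp\`ere operator (Bedford--Taylor) and after affinely rescaling $w_j$ into $[-1,0]$ one gets $\int_{K_t\cap\overline{\Om'_{t,j}}}(\om_t+\ddc u)^n\le(C_j\norm{\rho_j}_{L^\infty}+1)^n\,\CAP(K_t\cap\overline{\Om'_{t,j}};\Om_{t,j})$. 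Since, after shrinking $\BD$, the sets $\overline{\Om'_{t,j}}$ cover $X_t$ for all $t\in\overline{\BD}_{1/2}$, summing over $j$ and taking the supremum over $u$ yields this inequality with constant $\max_j(C_j\norm{\rho_j}_{L^\infty}+1)^n$.

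For the reverse inequality $\CAPBT(K_t)\le C_{BT,\om}\,\CAP_{\om_t}(K_t)$, I fix $j$, set $E:=K_t\cap\overline{\Om'_{t,j}}\Subset\Om_{t,j}$, and let $v:=h^\ast_{E;\Om_{t,j}}$ be the relative extremal function, so that $-1\le v\le0$, the measure $(\ddc v)^n$ is carried by $E$, and $\CAP(E;\Om_{t,j})=\int_{\Om_{t,j}}(\ddc v)^n$ by Definition and Proposition~\ref{defnprop:relative_extremal_function}. Two observations drive the proof: first, $v\ge\rho_{t,j}/c_j$ on $\Om_{t,j}$, because $\rho_{t,j}/c_j$ is an admissible competitor in the defining supremum (it is $\le-1$ on $\{\rho_{t,j}\le-c_j\}\supseteq E$), which also forces $v\to0$ at $\partial\Om_{t,j}$; second, $\psi_j:=\rho_j/A_\rho$ is globally $\om_t$-psh, where $A_\rho$ is the uniform constant with $A_\rho\om+\ddc\rho_j\ge0$ on $\pi^{-1}(\overline{\BD}_{1/2})$. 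Then, for small $\vep\in(0,c_j/A_\rho)$ and a constant $\eta\in(\vep-c_j/A_\rho,0)$ chosen suitably, the function $u$ that equals $\max(\vep v+\eta,\psi_j)$ on $\Om_{t,j}$ and equals $\psi_j$ on $X_t\setminus\Om_{t,j}$ is a well-defined bounded $\om_t$-psh function on $X_t$: near $\partial\Om_{t,j}$ one has $\psi_j>\eta\ge\vep v+\eta$ (using $\psi_j\to0>\eta$ and $v\le0$), so the two defining formulas agree there, whereas on a neighbourhood of $\overline{\Om'_{t,j}}$ the bound $v\ge\rho_{t,j}/c_j$ and the choice of $\eta$ give $\vep v+\eta\ge\psi_j$, hence $u=\vep v+\eta$ on that neighbourhood. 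Consequently $(\om_t+\ddc u)^n=(\om_t+\vep\ddc v)^n\ge\vep^n(\ddc v)^n$ on $E$; dividing $u$ by its oscillation $L_j$ to renormalise it into $[0,1]$ then gives $\CAP_{\om_t}(E)\ge(\vep/L_j)^n\,\CAP(E;\Om_{t,j})$, and summing over $j$, using $E\subseteq K_t$ and monotonicity of $\CAP_{\om_t}$, produces the claim.

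Setting $C_{BT,\om}$ to be the larger of the two constants produced above finishes the proof, and the uniformity in $t$ is automatic: the $C_j$, the $\norm{\rho_j}_{L^\infty(\CU_j)}$, the $c_j$, the oscillation bounds $L_j$, and $A_\rho$ are all read off from the fixed finite data $(\rho_j)$ and from $\om$ over $\pi^{-1}(\overline{\BD}_{1/2})$. I expect the one delicate point to be the gluing step in the reverse inequality: one must extend a multiple of the local extremal function to a \emph{global} $\om_t$-psh function regardless of the size of $A_\rho$ relative to the $c_j$, and this is exactly why one uses the sharper bound $v\ge\rho_{t,j}/c_j$ instead of merely $v\ge-1$.
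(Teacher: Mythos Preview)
Your proof is correct and follows the same overall outline as the paper's: localise via the finite double cover, compare on each piece, and observe that every constant is read off from the fixed data $(\rho_j)_j$, $\om$, $A_\rho$, $c_\rho$ over $\pi^{-1}(\overline\BD_{1/2})$. The first inequality ($\CAP_{\om_t}\le C\,\CAPBT$) is identical to the paper's argument.

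The gluing step for the reverse inequality is where you diverge. The paper does not pass through the relative extremal function: it takes an \emph{arbitrary} competitor $u_t\in\PSH(\Om_{j,t})$ with $0\le u_t\le1$, introduces an auxiliary smooth function $\chi_j$ on $\CU_j$ with $\chi_j\equiv-1$ on $\CU'_j$ and $\chi_j\equiv 2$ near $\partial\CU_j$, chooses $\dt_j\in(0,1/3)$ small enough that $\dt_j\chi_j$ extends to an $\om$-psh function on $\CX$, and glues $\dt_j u_t$ with $\dt_j\chi_j$ via a max. This yields $\int_{K_{j,t}}(\ddc_t u_t)^n\le\dt_j^{-n}\CAP_{\om_t}(K_t)$ directly, with no balancing of parameters. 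Your route instead exploits the explicit lower bound $v\ge\rho_{t,j}/c_j$ on the extremal function and glues against $\rho_j/A_\rho$; this is more intrinsically pluripotential-theoretic and avoids introducing the extra $\chi_j$, at the price of having to tune $\vep,\eta,c_j,A_\rho$ together. Both arguments produce a $t$-uniform constant.

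One small technical point: when you ``divide $u$ by its oscillation $L_j$'', the resulting function is $\om_t$-psh only when $L_j\ge1$. If $L_j<1$ you should simply shift $u$ into $[0,1]$ without rescaling; equivalently, replace $L_j$ by $\max(L_j,1)$ throughout. Since $L_j\le\osc_{X_t}\psi_j+\vep+|\eta|$ is bounded independently of $E$, $K_t$, and $t$, this does not affect the uniformity.
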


\begin{proof}
	On a fixed compact K\"{a}hler manifold, a similar version of Lemma~\ref{lem:loc_global_cap_comparison} was provided by Ko{\l}odziej \cite[Section~1]{Kolodziej_2003}.
	The proof of Lemma~\ref{lem:loc_global_cap_comparison} is similar to Ko{\l}odziej's proof. 
	For the reader's convenience, we include the proof here.
	
	After shrinking $\BD$, we may assume that $(\CU_j)_j$ and $(\CU_j')_j$ form a pseudoconvex double cover of $\CX$ such that $\CU_j = \{\rho_j < 0\}$ for some strictly psh function $\rho_j$ and $\CU_j = \{\rho_j < -c_j\}$. 
	Multiplying a positive constant, we may assume that $\frac{1}{C} \ddc_\CX \rho_j \leq \om \leq C \ddc_\CX \rho_j$.
	Let $C' > 0$ be a constant so that $0 \leq \rho_j \leq C'$ for all $j$.
	
	Write $\Om_{j,t} = \CU_j \cap X_t$ and $\Om_{j,t}' = \CU_j' \cap X_t$.
	Fix $u_t \in \PSH(X_t, \om_t)$ with $0 \leq u_t \leq 1$.
	Let $K_t$ be a compact subset of $X_t$ and $K_{t,j} := K_t \cap \overline{\Om_j'}$.
	Then we have
	\begin{align*}
		\int_{K_t} (\om_t + \ddc_t u_t)^n
		\leq& \sum_j \int_{K_{j,t}} (\ddc_t (C \rho_j + u_t))^n\\
		=& \sum_j \int_{K_{j,t}} (CC'+1)^n \lt(\ddc_t \lt(\frac{C \rho_j + u_t}{CC'+1}\rt) \rt)^n\\
		\leq& (CC'+1)^n \sum_j \CAP(K_{j,t}; \Om_j)
		= (CC'+1)^n \CAPBT(K_t)
	\end{align*}
	and hence
	$\CAP_{\om_t}(K_t) \leq (CC'+1)^n \CAPBT(K_t)$.
	
	On the other hand, we shall use the gluing argument to prove the local capacity is bounded by the global capacity.
	Suppose $u_t \in \PSH(\Om_{j,t})$ and $0 \leq u_t \leq 1$.
	Consider a smooth function $\chi_j$ defined on $\CU_j$ such that
	\[
	\chi(z) = 
	\begin{cases}
		-1 & \text{when } z \in \CU'_j\\
		2 & \text{when $z$ is in a neighborhood of $\pl \CU_j$}
	\end{cases}.
	\]
	We can find a small $\dt_j \in (0, \frac{1}{3})$ such that $\dt_j \chi_j$ can be extended to a $\om$-psh function on $\CX$.
	Now, we us the gluing argument to define a $\om_t$-psh function $\psi$ and it is identically equal to $\dt_j u_t$ in $\Om'_{j,t}$
	\[
	\psi(z) = \begin{cases}
		\dt_j u(z) & \text{when }z \in \Om'_{j,t}\\
		\max\{\dt_j\res{\chi_j}{X_t}, \dt_j u_t\} & \text{when } z \in \Om_{j,t} \setminus \Om'_{j,t}\\
		\dt_j\chi_j & \text{when } z \in X_t \setminus \Om_{j,t}\\
	\end{cases}.
	\]
	Obviously, $\tde{\psi} = \psi + 1/3$ is a $\om_t$-psh function and $0 \leq \tde{\psi} \leq 1$.
	Then, we obtain
	\[
	\int_{K_{j,t}} (\ddc_t u_t)^n
	= \frac{1}{\dt_j^n} \int_{K_{j,t}} (\ddc_t \dt_j u_t)^n
	\leq \frac{1}{\dt_j^n} \int_{K_t} (\om_t + \ddc_t \tde{\psi})^n
	\leq \frac{1}{\dt_j^n} \CAP_{\om_t}(K_t).
	\]
\end{proof}

Combining Proposition~\ref{prop:local_VC_comparison_in_family} and Lemma~\ref{lem:loc_global_cap_comparison}, we have the global volume-capacity comparison in families:
\begin{prop}\label{prop:global_VC_comparison}
	Given $k > 1$, there exists a uniform constant $C_{GVC,k} \geq 1$ such that 
	\[
	\forall \text{ compact subset } K_t \subset X_t, \quad 
	\Vol_{\om_t} (K_t) \leq C_{GVC,k} \CAP^k_{\om_t}(K_t)
	\]
	for all $t \in \BD_{1/2}$.
\end{prop}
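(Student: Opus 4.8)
The plan is to obtain the global volume-capacity comparison by localizing with a finite pseudoconvex double cover and chaining the two facts already at our disposal: the local comparison of Proposition~\ref{prop:local_VC_comparison_in_family} applied chart by chart, and the equivalence of the Bedford--Taylor capacity $\CAPBT$ with the global capacity $\CAP_{\om_t}$ from Lemma~\ref{lem:loc_global_cap_comparison}. After shrinking $\BD$ if necessary, I would fix a finite pseudoconvex double cover $(\CU_j = \{\rho_j<0\})_{1\le j\le N}$, $(\CU_j'=\{\rho_j<-c_j\})_{1\le j\le N}$ of $\CX$ with $\overline{\CU_j'}\Subset\CU_j\Subset\wCU_j$, chosen so that the slices $\Om_{j,t} := \CU_j\cap X_t$ and $\Om_{j,t}' := \CU_j'\cap X_t$ form a pseudoconvex double cover of $X_t$ for every $t\in\BD_{1/2}$; this is exactly the covering used in the proof of Lemma~\ref{lem:loc_global_cap_comparison} and when fixing $A_\rho,c_\rho,N$ in the proof of Theorem~\ref{bigthm:uniform_a_priori_estimate}.

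Given a compact set $K_t\subset X_t$, set $K_{t,j} := K_t\cap\overline{\Om_{j,t}'}$. Since the open sets $\Om_{j,t}'$ cover $X_t$ one has $K_t=\bigcup_{j=1}^N K_{t,j}$, hence $\Vol_{\om_t}(K_t)\le\sum_{j=1}^N\Vol_{\om_t}(K_{t,j})$. Because $\overline{\CU_j'}\Subset\CU_j$ and $X_t$ is compact, each $K_{t,j}$ is a compact subset of the open set $\Om_{j,t}$, so $K_{t,j}\Subset\Om_{j,t}$ and Proposition~\ref{prop:local_VC_comparison_in_family}, applied to the chart $\CU_j$ (with the local volume form $\res{\om^n}{X_t}=\om_t^n$), yields a constant $C_{VC,k}^{(j)}>0$, independent of $t\in\BD_{1/2}$, such that $\Vol_{\om_t}(K_{t,j})\le C_{VC,k}^{(j)}\CAP^k(K_{t,j};\Om_{j,t})$. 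As $\CAP(K_{t,j};\Om_{j,t})$ is one of the summands defining $\CAPBT(K_t)$, it is $\le\CAPBT(K_t)$, and Lemma~\ref{lem:loc_global_cap_comparison} gives $\CAPBT(K_t)\le C_{BT,\om}\CAP_{\om_t}(K_t)$. Summing over $j$ then produces
\[
\Vol_{\om_t}(K_t)\ \le\ \Big(\sum_{j=1}^N C_{VC,k}^{(j)}\Big)\,\CAPBT^k(K_t)\ \le\ \Big(\sum_{j=1}^N C_{VC,k}^{(j)}\Big)C_{BT,\om}^k\,\CAP_{\om_t}^k(K_t),
\]
so the proposition holds with $C_{GVC,k}:=\max\{1,\,C_{BT,\om}^k\sum_{j=1}^N C_{VC,k}^{(j)}\}$, depending only on $k$, the fixed covering, and the family.

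I do not expect a genuine obstacle, since the uniformity in $t$ is already contained in Proposition~\ref{prop:local_VC_comparison_in_family} and Lemma~\ref{lem:loc_global_cap_comparison} and the cover is finite; the only points needing a line of care are the relative compactness $K_{t,j}\Subset\Om_{j,t}$ (immediate from $\overline{\CU_j'}\Subset\CU_j$ and compactness of $X_t$) and matching the local volume form $\res{\om^n}{X_t}$ with $\om_t^n$ so that the two comparisons refer to the same measure. If anything is slightly delicate, it is only the bookkeeping of how $C_{GVC,k}$ depends on $k$ through the constant in the local comparison.
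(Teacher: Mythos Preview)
Your argument is correct and is precisely the fleshing-out of what the paper does: the paper simply states that the proposition follows by ``combining Proposition~\ref{prop:local_VC_comparison_in_family} and Lemma~\ref{lem:loc_global_cap_comparison}'', and your localization via the finite double cover, chart-by-chart application of the local comparison, and passage from $\CAPBT$ to $\CAP_{\om_t}$ is exactly that combination. The only bookkeeping point you flag (matching the local $\dd V_t$ with $\om_t^n$) is harmless, since the two metrics are uniformly comparable on the fixed cover and this only affects the constant.
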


\section{Sup-$L^1$ comparison in families}\label{sec:L1_estimate}
In this section, we pay a special attention to Conjecture~\ref{conj:SL}.
We shall follow the strategy of proof in \cite[Section 3]{DGG2020}.
First of all, we recall the assumption which will be used in this section: 
\begin{asm}[=Geometric assumption~\ref{geoasm:L1_assumption}]
	Suppose that $\pi: \CX \ra \BD$ is a family of hermitian varieties satisfies the geometric setting \ref{geoset:irr_family} and one of the following conditions:
	\begin{enumerate}[label=(\alph*)]
		\item $\pi$ is locally trivial;
		\item $\pi: \CX \ra \BD$ is a smoothing of $X_0$ and $X_0$ has only isolated singularities.
	\end{enumerate} 
\end{asm}
Under such assumption, we establish a uniform $L^1$-estimate of $\om_t$-psh function: 
\begin{prop}[=Proposition \ref{bigprop:uniform_L1_isolated_singularities}]\label{prop:L^1_estimate}
	Suppose that $\pi: \CX \ra \BD$ satisfies the geometric assumption~\ref{geoasm:L1_assumption}.
	Then there exists a uniform constant $C>0$ such that for all $t \in \BD_{1/2}$ 
	\[
	\forall \vph_t \in \PSH(X_t, \om_t), \quad
	\sup_{X_t} \vph_t - C \leq \frac{1}{V_t} \int_{X_t} \vph_t \om_t^n \leq \sup_{X_t} \vph_t,
	\]
	where $V_t := \Vol_{\om_t}(X_t)$.
\end{prop}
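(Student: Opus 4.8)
The plan is to prove the two inequalities separately. The right-hand inequality is immediate: since $\vph_t \leq \sup_{X_t}\vph_t$ pointwise on $X_t$, integrating against $\om_t^n$ and dividing by $V_t$ gives $\frac{1}{V_t}\int_{X_t}\vph_t\om_t^n \leq \sup_{X_t}\vph_t$. For the left-hand inequality, after replacing $\vph_t$ by $\vph_t - \sup_{X_t}\vph_t$ I may assume $\sup_{X_t}\vph_t = 0$, and since $t \mapsto V_t$ is continuous and positive on $\overline{\BD}_{1/2}$ (continuity of the masses of $\om^n\w[X_t]$, cf. \cite[Section 1.3]{Pan_2021}), it suffices to produce a uniform $C>0$ with $\int_{X_t}(-\vph_t)\om_t^n \leq C$ for all $t\in\BD_{1/2}$. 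I would argue by contradiction: if no such $C$ exists there are $t_j\to t_\infty\in\overline{\BD}_{1/2}$ and $\vph_j\in\PSH(X_{t_j},\om_{t_j})$ with $\sup_{X_{t_j}}\vph_j = 0$ and $\int_{X_{t_j}}(-\vph_j)\om_{t_j}^n\to+\infty$, and I would rule this out in each case of the geometric assumption~\ref{geoasm:L1_assumption}.

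In case (a), where $\pi$ is locally trivial, all fibres are identified with a fixed compact hermitian variety $X$; fixing a resolution $\mu\colon\wX\to X$ with a hermitian metric $\om_\wX$, and covering $\overline{\BD}_{1/2}$ by finitely many subdiscs over which $\pi$ is a product, the pulled-back metrics $\mu^\ast\om_t$ on $\wX$ vary continuously in $t$, hence $\mu^\ast\om_t \leq \Lambda\,\om_\wX$ for a uniform $\Lambda\geq1$ and all $t\in\overline{\BD}_{1/2}$. Then $\mu^\ast\vph_j\in\PSH(\wX,\Lambda\om_\wX)$ with $\sup_\wX\mu^\ast\vph_j = 0$, so by the standard $L^1$-compactness of normalized quasi-psh functions on the fixed compact manifold $\wX$ (cf. \cite[Chapter 8]{GZbook_2017}), $\int_\wX(-\mu^\ast\vph_j)\om_\wX^n$ is bounded; since $\mu$ is biholomorphic off the pluripolar set $\Exc(\mu)$ and $\mu^\ast\om_{t_j}^n\leq\Lambda\,\om_\wX^n$, this gives $\int_{X_{t_j}}(-\vph_j)\om_{t_j}^n = \int_\wX(-\mu^\ast\vph_j)\,\mu^\ast\om_{t_j}^n\leq\Lambda\int_\wX(-\mu^\ast\vph_j)\om_\wX^n$, bounded — a contradiction.

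In case (b), where $\pi$ is a smoothing of $X_0$ with $\mathrm{Sing}(X_0)=\{p_1,\dots,p_k\}$ finite, I would localize around the $p_i$. Choose disjoint Stein coordinate balls $B_i'\Subset B_i\Subset\CX$ centred at $p_i$, small enough that $\pi$ is a submersion near $\overline{B_i}\setminus\{p_i\}$ and transverse to $\partial B_i$ and $\partial B_i'$; on $Y:=\pi^{-1}(\overline{\BD}_r)\setminus\bigcup_i B_i'$ ($r>1/2$) the map $\pi$ is a proper $C^\infty$-submersion of manifolds with boundary, hence $C^\infty$-locally trivial near $t_\infty$ by Ehresmann, so the complex structures and hermitian metrics induced on the fixed manifold-with-boundary $X_{t_\infty}\setminus\bigcup_i B_i'$ form a $C^\infty$-precompact family. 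Fixing a finite cover of this manifold by coordinate charts carrying local $\om_t$-potentials with uniformly bounded $C^2$-norms, the sub-mean value inequality applied at a point $y_j\in X_{t_j}$ with $\vph_j(y_j)=0$ gives a uniform lower bound for the average of $\vph_j$ over a small ball about $y_j$, and propagating this bound through the (connected, by irreducibility of $X_{t_j}$) chain of charts — via the local comparison of $L^1$-norms of psh functions bounded above — yields a uniform $C_1$ with $\int_{X_{t_j}\setminus\bigcup_i B_i}(-\vph_j)\om_{t_j}^n\leq C_1$ and a uniform pointwise bound $-\vph_j\leq C_2$ on the collars $B_i\setminus\overline{B_i'}$. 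It then remains to bound $\int_{B_i\cap X_{t_j}}(-\vph_j)\om_{t_j}^n$: for $t_j$ near $0$ the slice $\Om_{i,t_j}:=B_i\cap X_{t_j}$ is strongly pseudoconvex, $\vph_j|_{\Om_{i,t_j}}$ differs from a psh function by a uniformly bounded $\om_{t_j}$-potential, and it is $\geq -C_2$ on a collar, so a local estimate on pseudoconvex domains — gluing $\vph_j$ with a large multiple of the defining function of $B_i$ along the collar and using the Green-function representation, in the spirit of \cite[Lemma 1.9]{Guedj_Guenancia_Zeriahi_2020} — bounds this integral uniformly. Summing over $i$ contradicts $\int_{X_{t_j}}(-\vph_j)\om_{t_j}^n\to+\infty$.

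The main obstacle is the uniformity as $t\to0$ in case (b): away from the singularities one must verify that the chart data feeding the Harnack-type propagation (number of charts, radii, overlap sizes, potential bounds) can be chosen independently of $t$, which rests on $\pi$ being a submersion on $\CX$ off $\{p_1,\dots,p_k\}$ so that Ehresmann's theorem for manifolds with boundary applies after making $\pi$ transverse to the $\partial B_i$; and near each singular point one must run the pseudoconvex-domain $L^1$-estimate with constants that do not blow up as $B_i\cap X_t$ degenerates to $B_i\cap X_0$. Case (a) and the away-from-singularities part of case (b) are comparatively routine; the delicate point is gluing the two regions together with constants genuinely independent of $t$.
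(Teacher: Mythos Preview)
Your argument in case~(b) has a genuine gap near the singular points. After establishing the collar bound $\vph_j \geq -C_2$ on $B_i\setminus\overline{B_i'}$, you claim that ``gluing $\vph_j$ with a large multiple of the defining function of $B_i$ along the collar and using the Green-function representation'' bounds $\int_{B_i\cap X_{t_j}}(-\vph_j)\,\om_{t_j}^n$ uniformly. But the Green (Riesz) representation of a psh function $u$ with zero boundary data gives $\int_{\Om_t}(-u)\,\dd V_t$ in terms of $\int_{\Om_t}\Delta u$ (equivalently $\int_{\Om_t}\ddc u\wedge\om_t^{n-1}$), and plurisubharmonicity alone provides no upper bound on this Laplacian mass; a lower bound on $u$ on a collar does not propagate inward, since the sub-mean-value inequality goes the wrong way. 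Your gluing $\psi_j=\max(\vph_j+\text{potential},A\rho_i)$ does control $\int_{\Om_t}\Delta\psi_j$ via the boundary flux of $A\rho_i$, but it only yields $\psi_j\geq\vph_j+\text{const}$, which gives a lower bound on $\int(-\vph_j)$, not the upper bound you need. The reference to \cite[Lemma~1.9]{Guedj_Guenancia_Zeriahi_2020} is also off target: that lemma is the volume--capacity comparison, not an $L^1$-bound for psh functions.

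This is exactly where the Hermitian setting diverges from the K\"ahler case of \cite[Section~3.3]{DGG2020}: when $\om$ is closed, Stokes gives $\int_{X_t}(\om_t+\ddc\vph_t)\wedge\om_t^{n-1}=V_t$ automatically, and the near-singularity estimate follows. Here that identity fails, and the paper's proof supplies the missing Laplacian bound as Lemma~\ref{lem:Laplacian_estimate_smoothing}: using the uniform control of the Gauduchon factor from \cite[Theorem~A]{Pan_2021}, one gets $\int_{X_t}(\om_t+\ddc\vph_t)\wedge\om_t^{n-1}\leq C_G V_t$. The paper then avoids any local analysis on the degenerating pieces $B_i\cap X_t$: it builds a global test function $\rho$ (a sum of cut-off local potentials) with $\om\leq C_\rho\,\ddc\rho$ near the singular set, writes $\om_t^n=C_\rho^n(\ddc\rho)^n+\Om_t$ with $\Om_t\leq 0$ there, controls $\int(-\vph_{t_k})\Om_{t_k}$ by the away-from-singularities estimate, and controls $\int(-\vph_{t_k})(\ddc\rho)^n$ by a single global integration by parts against the Laplacian bound. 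That global step is the idea you are missing.

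A smaller point: in case~(a) you assume the fibres are globally identified with a fixed $X$ (so that a single resolution $\mu:\wX\to X$ receives all $\mu^\ast\om_t$). The paper's notion of local triviality trivializes over a cover of $\CX$, not of $\BD$, so a global identification $X_t\cong X_0$ is not part of the hypothesis; the paper's argument (following \cite[Section~3.2]{DGG2020}) works chart by chart via the local biholomorphisms $h_{t,i}$ and Hartogs' lemma, then propagates using connectedness of $X_0$.
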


{
	\subsection{Proof of Proposition~\ref{bigprop:uniform_L1_isolated_singularities}}
	
	\subsubsection{Locally trivial families}
	In locally trivial cases, one can follow the similar strategy in \cite[Section 3.2]{DGG2020} for the proof. 
	The only difference is to replace the local potentials of $\om_0$ by the local inequalities $0 < C_\rho^{-1} \ddc\rho \leq \om_0 \leq C_\rho \ddc\rho$ for some local psh function $\rho$ and some constant $C_\rho > 0$ on $X_0$. 
	
	\subsubsection{Smoothing of varieties with isolated singularities}
	Before diving into the main goal, we recall the uniform boundedness of the integral of Laplacian.
	\begin{lem}\label{lem:Laplacian_estimate_smoothing}
		Suppose that $\pi: (\CX,\om) \ra \BD$ satisfies the geometric setting \ref{geoset:irr_family} and it is a smoothing of a compact variety $X_0$.
		There is a uniform constant $C_{\Lap} > 0$ such that
		\[
		\forall \vph_t \in \PSH(X_t, \om_t), \quad
		\int_{X_t} (\om_t + \ddc_t \vph_t) \w \om_t^{n-1} \leq C_{\Lap},
		\]
		for all $t \in \BD_{1/2}$.
	\end{lem}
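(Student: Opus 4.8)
The plan is to deduce this uniform Laplacian bound from \cite[Theorem~A]{Pan_2021}, where it is established for families of compact hermitian varieties satisfying the geometric setting \ref{geoset:irr_family} --- in particular for smoothings. Let me first record the reduction that makes clear what is at stake. Since adding a constant to $\vph_t$ leaves the left-hand side unchanged, we may normalize $\sup_{X_t}\vph_t=0$, so $\vph_t\le 0$; integration by parts being legitimate on the compact variety $X_t$ (cf. \cite[Lemma~2.11]{DGG2020}), one gets
\[
    \int_{X_t}(\om_t+\ddc_t\vph_t)\w\om_t^{n-1}
    = \Vol_{\om_t}(X_t) + \int_{X_t}\vph_t\,\ddc_t\om_t^{n-1}
    \le \Vol_{\om_t}(X_t) + B'\int_{X_t}(-\vph_t)\,\om_t^n,
\]
where $-B'\om_t^n\le \ddc_t\om_t^{n-1}\le B'\om_t^n$ for a constant $B'$ uniform over $\bBD_{1/2}$ (obtained by restricting to the fibres the corresponding bound for the smooth data on $\CX$). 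So it would be enough to have the uniform bound $\int_{X_t}(-\vph_t)\,\om_t^n\le C\,\Vol_{\om_t}(X_t)$ for $\sup_{X_t}\vph_t=0$; but this is a priori stronger than what we need and is not yet available --- it is essentially the sup-$L^1$ comparison \ref{conj:SL} we are in the middle of proving. The point of \cite[Theorem~A]{Pan_2021} is precisely that the weaker Laplacian bound can be obtained \emph{directly}, without the global $L^1$ estimate, so that invoking it here creates no circularity.

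Granting \cite[Theorem~A]{Pan_2021}, the remaining work is cosmetic: that result yields a uniform constant $C_G>0$ with $\int_{X_t}(\om_t+\ddc_t\vph_t)\w\om_t^{n-1}\le C_G$ for all $t$ in a neighbourhood of $0$, in particular on $\bBD_{1/2}\supset\BD_{1/2}$; since $t\mapsto\Vol_{\om_t}(X_t)$ is continuous (cf. \cite[Section~1.3]{Pan_2021}) and strictly positive, hence bounded below on $\bBD_{1/2}$, one has $C_G\le C_G'\,\Vol_{\om_t}(X_t)$ with $C_G'$ uniform, which is the asserted inequality. Should one instead want a self-contained argument, the strategy of \cite{Pan_2021} is to localize: cover $\pi^{-1}(\bBD_r)$ by finitely many strongly pseudoconvex pieces; over the pieces avoiding a fixed small neighbourhood of the finitely many singular points of $X_0$ the map $\pi$ is a proper submersion, so Ehresmann's theorem trivializes the family there and a uniform local bound follows from the smooth hermitian theory together with the sub-mean-value inequality; over a neighbourhood of each singular point one uses the embedding into $\BC^N$ and the Chern--Levine--Nirenberg and Lelong-number estimates --- exactly those appearing in the proof of Theorem~\ref{thm:local_Skoda_in_family} above --- to bound the relevant local masses uniformly; and finally one chains the local estimates into a global one using connectedness of the fibres.

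The main obstacle in the self-contained route is this last local analysis near the isolated singular points of $X_0$: one must control plurisubharmonic data on the degenerating slices $\CV\cap X_t$ by constants independent of $t$ as $t\to 0$, and it is here that the hypothesis ``$X_0$ has only isolated singularities'' is indispensable (a uniform global Skoda estimate \ref{bigasm:Skoda}, which would make everything soft, is not yet at our disposal at this stage). Since this is precisely the content of \cite[Theorem~A]{Pan_2021}, in practice I would simply invoke that result and pass from $C_G$ to $C_G\,\Vol_{\om_t}(X_t)$ as above.
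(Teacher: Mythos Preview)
Your instinct to invoke \cite[Theorem~A]{Pan_2021} is correct, but you have misidentified its content. That theorem does not state the Laplacian bound directly; what it asserts is that for $t\in\BD_{1/2}^\ast$ the normalized Gauduchon conformal factor $g_t$ --- the positive function on the smooth fibre $X_t$ satisfying $\ddc_t(g_t\om_t^{n-1})=0$ --- obeys $1\le g_t\le C_G$ uniformly. The paper's proof then exploits this: since $g_t\om_t^{n-1}$ is $\ddc_t$-closed, integration by parts against it annihilates the $\vph_t$-term,
\[
\int_{X_t}(\om_t+\ddc_t\vph_t)\w\om_t^{n-1}
\le\int_{X_t}(\om_t+\ddc_t\vph_t)\w g_t\om_t^{n-1}
=\int_{X_t}g_t\om_t^n
\le C_G\Vol_{\om_t}(X_t).
\]
This Gauduchon trick is precisely the mechanism that resolves the circularity you correctly identified: rather than bounding $\int_{X_t}(-\vph_t)\,\om_t^n$, one replaces $\om_t^{n-1}$ by a nearby $\ddc$-closed $(n-1,n-1)$-form so that the $\vph_t$ contribution vanishes outright. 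For $t=0$ the paper argues separately, invoking the classical sup-$L^1$ bound on the single fixed variety $X_0$ together with the computation of Lemma~\ref{lem:upper_bound_c}.

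Two further points. Your ``self-contained'' sketch (localisation, Ehresmann away from singularities, CLN/Lelong estimates near singularities, chaining via connectedness) does not describe how the Gauduchon-factor bound is actually obtained and would not straightforwardly produce the global Laplacian integral. More importantly, the isolated-singularities hypothesis plays \emph{no} role in this lemma: the statement assumes only that $\pi$ is a smoothing, and the Gauduchon trick works because the fibres $X_t$ for $t\neq 0$ are smooth. The isolated-singularities assumption enters only afterwards, in the proof of Proposition~\ref{prop:L^1_estimate}.
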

	
	\begin{proof}
		Recall that from \cite[Theorem A]{Pan_2022}, there is a uniform constant $C_G > 0$ such that for all $t \in \BD_{1/2}^\ast$ the normalized Gauduchon factor $g_t$ with respect to $(X_t, \om_t)$ (i.e. $\ddc_t \lt( g_t \om_t^{n-1} \rt) = 0$) is bounded between $1$ and $C_G$.
		Then we have 
		\begin{align*}
			\int_{X_t} (\om_t + \ddc_t \vph_t) \w \om_t^{n-1} 
			&\leq \int_{X_t} (\om_t + \ddc_t \vph_t) \w g_t \om_t^{n-1}\\
			&\leq \int_{X_t} g_t \om_t^n + \int_{X_t} \vph_t \ddc_t(g_t \om_t^{n-1})
			\leq C_G \Vol_{\om_t}(X_t)
		\end{align*}
		for all $t \in \BD_{1/2}^{\ast}$.
		Since $(\Vol_{\om_t}(X_t))_{t \in \BD_{1/2}}$ is uniformly bounded from above (cf. \cite[Section 1.4]{Pan_2022}), we have a desired estimate for all $t \in \BD_{1/2}^\ast$.
		
		\smallskip
		
		On the central fibre $X_0$, there always exists a constant $C_{SL,0}>0$ such that 
		\[
		\frac{1}{\Vol_{\om_0}(X_0)} \int_{X_0} -\vph_0 \om_0^n \leq C_{SL,0}
		\] 
		for all $\vph_0 \in \PSH(X_0, \om_0)$ with $\sup_{X_0} \vph_0 = 0$.
		For all $\vph_0 \in \PSH(X_0, \om_0)$, we take $\tilde{\vph}_0 = \vph_0 - \sup_{X_0} \vph_0$. 
		Then one can use the argument as in Lemma~\ref{lem:upper_bound_c} to find a constant $C_{\Lap}' > 0$ such that $\int_{X_0} (\om_0 + \ddc_0 \vph_0) \w \om_0^{n-1} \leq C_{\Lap}'$.
		We have thus obtained the desired estimate. 
	\end{proof}
	
	We follow the idea in \cite[Section 3.3]{DGG2020} to get the proof.
	Note that we only need to take care of sequences of sup-normalized $\om_{t_k}$-psh functions $(\vph_{t_k})_k$ where $t_k \xrightarrow[k \ra +\infty]{} 0$.
	\smallskip
	
	\noindent{\bf Step 1: Choose a good covering and a test function.}
	Following the same argument in \cite[page 30, Step 2]{DGG2020}, up to shrinking $\BD$, we can find a finite open covering $(\CV_i)_{i \in I}$ of $\CX$ such that 
	\begin{enumerate}
		\item each point of $\CZ := \CX^\sing = X_0^\sing$ belongs to exactly one element of $\CV_i$ of the covering, we denote by $J$ the collection of indices of these open subsets;
		\item on each $\CV_i$, we have a smooth strictly psh function $\rho_i$ such that
		\[
		C_\rho^{-1} \ddc \rho_i \leq \om \leq C_\rho \ddc \rho_i
		\quad\text{and}\quad
		0 \leq \rho_i \leq C_\rho
		\]
		for a uniform constant $C_\rho > 0$;
		\item for each $i \in J$, there is a relatively compact open subset $\CW_i \Subset \CV_i$ with $\CW_i \cap \CZ \neq \emptyset$.
	\end{enumerate}
	Define 
	\[
	\dt := \oh \min_{i \in J} \lt\{ \dist_\om (\pl\CW_i, \CW_i \cap X_0^\sing) \rt\} > 0.
	\]
	Let $\chi_i$ be a cut-off function supported in $\CV_i$ and $\chi_i \equiv 1$ in a neighborhood of $\CW_i$.
	Set $\rho = \sum_{i \in I} \chi_i \rho_i$.
	Obviously, we have $\om \leq C_\rho \ddc \rho$ on $\CW := \cup_{i \in I} \CW_i$.
	Furthermore, we may assume $-C_\rho \om \leq \ddc \rho \leq C_\rho \om$ on $\CX$ by choosing larger $C_\rho$.
	
	\smallskip
	
	\noindent{\bf Step 2: Uniform $L^1$-estimate away from singularities.}
	Define a set
	\[
	\CR := \set{p \in \CX}{\dist_\om(p,X_0^\sing) > \dt/2}.
	\]
	Since $\CR^c$ lies in $\CW$, after shrinking $\BD$, we can cover $\CR$ by finitely many open subsets $(\CU_i)_i$ away from the singular locus. 
	We may assume that $\pi$ is locally trivial on $\CR$ with respect to $(\CU_i)_i$ because $\pi$ is a submersion on $\CR$.
	
	\smallskip
	Following the argument in \cite[page 31, Step 3]{DGG2020}, one can prove that 
	there is a constant $C>0$ and a subsequence of $(t_k)_k$ such that 
	\[
	\sup_{\CR \cap X_{t_k}} \vph_{t_k} \geq -C.
	\]
	By the irreducibility of $X_0$, $\CR$ is connected.
	Then one can use the same proof in locally trivial cases to show that 
	\[
	\int_{\CR \cap X_{t_k}} (-\vph_{t_k}) \om^n_{t_k} \leq C_\CR
	\]
	for some uniform constant $C_\CR > 0$.
	
	\smallskip
	
	\noindent{\bf Step 3: Conclusion.}
	Recall that on $\CW$ we have $\om \leq C_\rho \ddc \rho$.
	Define a smooth $(n,n)$-form $\Om := \om^n - C_\rho^n (\ddc \rho)^n$.
	It is easy to see that $\res{\Om}{\CW \cap X_t} \leq 0$ and $\Om_t := \res{\Om}{X_t} \leq C_\Om \om_t^n$ for some uniform constant $C_\Om > 0$.
	Note that $\CR^c \subset \CW$.
	We have
	\begin{align*}
		\int_{X_{t_k}} (-\vph_{t_k}) \Om_{t_k} 
		&= \int_{\CR \cap X_{t_k}} (-\vph_{t_k}) \Om_{t_k}
		+ \int_{\CR^c \cap X_{t_k}} (-\vph_{t_k}) \Om_{t_k}\\
		&\leq C_\Om \int_{\CR \cap X_{t_k}} (-\vph_{t_k}) \om_{t_k}^n
		\leq C_\CR C_\Om.
	\end{align*}
	On the other hand, we have 
	\begin{align*}
		\int_{X_{t_k}} (-\vph_{t_k}) (\ddc \rho)^n
		&= \int_{X_{t_k}} - \rho \ddc \vph_{t_k} \w (\ddc \rho)^{n-1}\\
		&= - \int_{X_{t_k}} \rho (\om_{t_k} + \ddc \vph_{t_k}) \w (\ddc \rho)^{n-1}
		+ \int_{X_{t_k}} \rho \om_{t_k} \w (\ddc \rho)^{n-1}\\
		&\leq C_\rho^n \int_{X_{t_k}} (\om_{t_k} + \ddc \vph_{t_k}) \w \om_{t_k}^{n-1} + C_\rho^n \Vol_{\om_{t_k}}(X_{t_k})\\
		&\leq C_\rho^n C_{\Lap} + C_\rho^n \Vol_{\om_{t_k}}(X_{t_k}).
	\end{align*}
	The fourth line comes directly from Lemma~\ref{lem:Laplacian_estimate_smoothing}.
	All in all, we obtain a uniform $L^1$-estimate of sup-normalized $\om_{t_k}$-psh functions: 
	\begin{align*}
		\int_{X_{t_k}} (-\vph_{t_k}) \om_{t_k}^n
		&= C_\rho^n \int_{X_{t_k}} (-\vph_{t_k}) (\ddc \rho)^n 
		+ \int_{X_{t_k}} (-\vph_{t_k}) \Om_{t_k}
		\leq C_\rho^{2n} C_{\Lap} + C_\rho^{2n} \Vol_{\om_{t_k}}(X_{t_k}) + C_\CR C_\Om.
	\end{align*}
}

\section{Families of Calabi--Yau varieties}\label{sec:geometric_applications}
In this final section, we apply our results to families of Calabi--Yau varieties.
By Calabi--Yau variety, we mean a normal variety with canonical singularities and trivial canonical bundle.

\subsection{Canonical singularities}
We first recall the notion of canonical singularities. 
The reader is referred to \cite[Section 5]{EGZ_2009} for more details.
Let $X$ be a normal variety.
We say that $X$ has \textit{canonical singularities} if the pluricanonical sheaf $K_X^{[N]}$ is locally free for some $N \in \BN$, and for any resolution of singularities $p: \wX \ra X$, for any local generator $\af$ of $K_X^{[N]}$, the meromorphic pluricanonical form $p^\ast \af$ is holomorphic.

\begin{lem}\label{lem:AG_and_fibration}
	Suppose that $\pi: \CX \ra \BD$ is a proper surjective holomorphic map satisfying the geometric setting~\ref{geoset:irr_family}. 
	In addition, assume that 
	\begin{itemize}
		\item $\CX$ is normal and $K_\CX$ (or equivalently $K_{\CX/\BD}$) is locally free;
		\item for each $t \in \BD$, $X_t$ has only canonical singularities.
	\end{itemize}
	Then the following are equivalent
	\begin{itemize}
		\item $K_{\CX/\BD}$ (or $K_\CX$) is trivial up to shrinking $\BD$; 
		\item $K_{X_t}$ is trivial for all $t$ small (i.e. $X_t$ is Calabi--Yau for all $t$ small).
	\end{itemize}
\end{lem}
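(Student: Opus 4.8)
The plan is to reduce the statement to an adjunction identity on the fibres and then run Grauert's base change theorem. First I would dispose of the parentheticals: since $\BD$ is a disc, $K_\BD \cong \CO_\BD$ (a generator being $\dd t$), so $K_{\CX/\BD} = K_\CX \otimes \pi^\ast K_\BD^{-1} \cong K_\CX$ as line bundles; hence ``$K_{\CX/\BD}$ locally free/trivial'' and ``$K_\CX$ locally free/trivial'' are interchangeable, and I will work throughout with $L := K_{\CX/\BD}$. Next I would record the adjunction $L|_{X_t} \cong \CO_{X_t}(K_{X_t})$ for every small $t$. For this: $\pi$ is flat (the irreducible $\CX$ surjects onto the smooth curve $\BD$); each $X_t$ has canonical, hence rational, hence Cohen--Macaulay singularities, so $\CX$ --- flat over the smooth base $\BD$ with Cohen--Macaulay fibres --- is Cohen--Macaulay, and as $K_\CX$ is invertible, $\CX$ is Gorenstein; therefore the relative dualising sheaf of $\pi$ is the invertible sheaf $\omega_{\CX/\BD} \cong L$, it commutes with base change, and $L|_{X_t} \cong \omega_{X_t}$; finally each $X_t$ is Cohen--Macaulay with invertible $\omega_{X_t}$, so it is Gorenstein and $\omega_{X_t} = \CO_{X_t}(K_{X_t})$.

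With this in hand, the implication ``$L \cong \CO_\CX \Rightarrow K_{X_t}$ trivial'' is immediate by restriction. For the reverse, I would assume $K_{X_t}$ is trivial for all $t$ near $0$ and shrink $\BD$ so that this holds on all of $\BD$; then $L|_{X_t} \cong \CO_{X_t}$, and since the fibre $X_t$ is compact, connected and reduced, its only global holomorphic functions are constants, so $h^0(X_t, L|_{X_t}) = h^0(X_t, \CO_{X_t}) = 1$ for every $t$. Because $\pi$ is proper and $L$ is flat over $\BD$, the constancy of $t \mapsto h^0(X_t, L|_{X_t})$ lets me invoke Grauert's base change theorem: $\pi_\ast L$ is locally free of rank one and $(\pi_\ast L) \otimes \kappa(t) \to H^0(X_t, L|_{X_t})$ is an isomorphism for all $t$. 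After one more shrinking of $\BD$ I may take $\pi_\ast L$ free, generated by a section $s \in H^0(\CX, L)$; by the base change isomorphism $s_t := s|_{X_t}$ generates the one-dimensional space $H^0(X_t, L|_{X_t})$, so $s_t \neq 0$ for every $t$.

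To finish I would show $s$ is nowhere zero: fixing $t$ and a trivialisation $L|_{X_t} \cong \CO_{X_t}$, the section $s_t$ becomes a nonzero holomorphic function on the compact connected reduced space $X_t$, hence a nonvanishing constant; thus $\{s = 0\}$ meets no fibre, and since $\CX = \bigcup_{t \in \BD} X_t$, the section $s$ trivialises $L$. Hence $K_{\CX/\BD} \cong \CO_\CX$, and therefore $K_\CX \cong \CO_\CX$, after the finitely many shrinkings of $\BD$ made above. The only point needing real care is the adjunction step --- verifying that, in this mildly singular (Gorenstein) setting, the restriction of $K_{\CX/\BD}$ to a fibre really is the canonical sheaf of that fibre; everything after that is a routine combination of Grauert's theorem with the maximum principle on the compact fibres.
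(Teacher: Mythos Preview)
Your proof is correct and follows the same overall arc as the paper: establish the adjunction $K_{\CX/\BD}|_{X_t}\cong K_{X_t}$, deduce the easy direction by restriction, and for the converse use constancy of $h^0(X_t,K_{X_t})=1$ together with Grauert's base change theorem to produce a global section that is nowhere zero.

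The one genuine difference is in how the adjunction is justified. The paper avoids duality theory entirely: it restricts to the smooth locus, where $K_{X_t^{\mathrm{reg}}}\simeq K_{\CX^{\mathrm{reg}}/\BD}|_{X_t^{\mathrm{reg}}}$ is automatic, and then pushes forward along $j_t:X_t^{\mathrm{reg}}\hookrightarrow X_t$ using normality of $X_t$ and reflexivity of $K_{X_t}$. Your route instead invokes the Gorenstein machinery (canonical $\Rightarrow$ rational $\Rightarrow$ Cohen--Macaulay, hence $\CX$ Gorenstein, hence $\omega_{\CX/\BD}$ invertible and base-change compatible). Both are valid; the paper's argument is lighter on prerequisites, while yours makes the compatibility with base change structural rather than ad hoc. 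A small bonus of your version is that you check $s$ is nowhere zero on every fibre (via the maximum principle), whereas the paper only argues non-vanishing near $X_0$ and relies on the ``up to shrinking $\BD$'' clause.
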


\begin{proof}
	We include some arguments here for the reader's convenience.
	Suppose that $K_{\CX/\BD}$ (or $K_\CX$) is trivial. 
	Then we have 
	\[
	K_{X_t^\reg} \simeq \res{K_{\CX^\reg/\BD}}{X_t^\reg} 
	= \res{(K_{\CX^\reg} - \pi^\ast K_{\BD})}{X_t^\reg} 
	\simeq \res{\CO_{\CX^\reg}}{X_t^\reg} \simeq \CO_{X_t^\reg}.
	\]
	When $t$ is close to $0$, since $X_t$ is normal and $K_{X_t}$ is reflexive, we have 
	\[
	K_{X_t} \simeq (j_t)_\ast(K_{X_t^\reg}) \simeq (j_t)_\ast (\CO_{X_t^\reg}) \simeq \CO_{X_t}
	\]
	where $j_t: X_t^\reg \hookrightarrow X_t$ is the inclusion map for each $t$.
	
	Now, assume that $K_{X_t}$ is trivial for all $t$ close to $0$. 
	Since $(X_t)_{t \in \BD}$ are Calabi--Yau varieties, the map $t \mapsto h^0(X_t, K_{X_t})$ is constantly equal to $1$.
	As a direct consequence of Grauert's theorem \cite{Grauert_1960}, the direct image sheaf $\pi_\ast (K_{\CX / \BD})$ is locally free and 
	\begin{equation}\label{eq:Grauert}
		\pi_\ast(K_{\CX / \BD}) \otimes k(0) \simeq H^0(X_0, K_{X_0}),
	\end{equation}
	where $k(0)$ is the residue field at $0$. 
	Let $\Om_0$ be a nowhere vanishing trivialization section of $K_{X_0}$ on $X_0$.
	By the isomorphism (\ref{eq:Grauert}), $\Om_0$ descends to an element $s_0$ of $\pi_\ast(K_{\CX / \BD}) \otimes k(0) = \res{\pi_\ast(K_{\CX / \BD})}{0}$.
	Since every line bundle over $\BD$ is trivial, we can extend the vector $s_0 \in \res{\pi_\ast (K_{\CX / \BD})}{0}$ to a non-vanishing section $s$ of $\pi_\ast(K_{\CX / \BD})$ after shrinking $\BD$. 
	Then $s$ is identified with a section $\Om$ of $K_{\CX / \BD}$ which has the relation $\res{\Om}{X_0} = \Om_0$.
	Since $\Om_0$ is nowhere vanishing, the section $\Om$ is nowhere zero on a neighborhood of the central fibre. 
\end{proof}

\begin{rmk}
	One can prove that under the assumption $X_0$ has canonical singularities, then $\CX$ is normal, $\BQ$-Gorenstein in a neighborhood of $X_0$ and moreover $X_t$ has canonical singularities for $t$ close to $0$.
	However, we will not use that result.
\end{rmk}

\subsection{Families of Calabi--Yau varieties}
It is thus legitimate to work in the following setting:
\begin{taggedbigset}{(CY)}\label{set:family_CY_var}
	Let $\pi: (\CX, \om) \ra \BD$ be a family of compact hermitian varieties satisfying the geometric setting \ref{geoset:irr_family}. 
	Suppose that 
	\begin{enumerate}
		\item $\CX$ is normal and $K_{\CX}$ is trivial;
		\item for all $t \in \BD$, $X_t$ has only canonical singularities.
	\end{enumerate}
\end{taggedbigset}

From Lemma~\ref{lem:AG_and_fibration} and the inversion of adjunction (cf. \cite[Theorem 5.50]{Kollar_Mori_1998}), Setting \ref{set:family_CY_var} implies following properties:
\begin{enumerate}[label=(\arabic*)]
	\item $\CX$ has canonical singularities;
	\item For all $t \in \BD$, $K_{X_t}$ is trivial.
\end{enumerate}

Let $\Om$ be a trivializing section of $K_{\CX / \BD}$.
Define the function $\gm_t$ on $X_t$ by the equation 
\[
\Om_t \w \overline{\Om_t} = \e^{-\gm_t} \om_t^n
\]
and $\gm_t$ also induces a function $\gm$ on $\CX$ near $X_0$.
From {\cite[Lemma 4.4]{DGG2020}}, we have the following uniform integrability property:
\begin{prop}\label{prop:Lp_estimate_of_CY_densities}
	Up to shrinking $\BD$, there exists $p > 1$ and $C > 0$ such that for all $t \in \BD$, we have
	\[
	\int_{X_t} \e^{-p \gm_t} \om_t^n < C. 
	\]
\end{prop}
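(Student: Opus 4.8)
The plan is to follow the argument of \cite[Lemma~4.4]{DGG2020}; the K\"ahler hypothesis there plays no role, since the statement only compares the canonical density $\Om_t\w\overline{\Om_t}$ to the \emph{smooth} reference volume form $\om_t^n$ and never uses that $\om_t$ is closed. First note the algebraic identity $\e^{-p\gm_t}\om_t^n=\e^{-(p-1)\gm_t}\,\Om_t\w\overline{\Om_t}$, immediate from $\Om_t\w\overline{\Om_t}=\e^{-\gm_t}\om_t^n$; hence it suffices to produce $p>1$ and $C>0$ with $\int_{X_t}\e^{-(p-1)\gm_t}\,\Om_t\w\overline{\Om_t}\le C$ for all $t$ near $0$. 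Away from the locus $Z\subset\CX$ where the fibres are singular, $\e^{-\gm_t}$ is locally bounded above and below and $\Om_t\w\overline{\Om_t}\le C'\om_t^n$, so --- using the continuity of $t\mapsto V_t$ and of the total masses of $\om^n\w[X_t]$ from \cite[Section~1.3]{Pan_2021} --- those regions contribute a uniformly bounded amount after shrinking $\BD$. The whole content is therefore local around $Z$, which, up to shrinking $\BD$, is covered by finitely many coordinate charts.

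By Setting~\ref{set:family_CY_var}, Lemma~\ref{lem:AG_and_fibration} and the inversion of adjunction \cite[Theorem~5.50]{Kollar_Mori_1998}, $\CX$ has canonical singularities and $K_\CX$ is trivialised by $\Om$. Choose a log resolution $\mu\colon\widehat{\CX}\to\CX$ whose exceptional divisor $\widehat E=\sum_j\widehat E_j$, together with the strict transform of $X_0$, has simple normal crossings. Canonicity gives $K_{\widehat\CX}=\mu^\ast K_\CX+\sum_j a_j\widehat E_j$ with $a_j\in\BZ_{\ge0}$, so $\mu^\ast\Om$ is a \emph{holomorphic} $(n+1)$-form vanishing to order $a_j$ along $\widehat E_j$; locally $\mu^\ast\Om=\bigl(\prod_j z_j^{a_j}\bigr)h\,\dd z_0\w\cdots\w\dd z_n$ with $h$ a unit. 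For $t$ near $0$, set $\widehat X_t=(\pi\circ\mu)^{-1}(t)$, let $\mu_t\colon\widehat X_t\to X_t$ be the induced modification, and, should $\widehat X_t$ still be singular, compose with a resolution $\nu_t\colon Y_t\to\widehat X_t$. In a chart of $Y_t$ where the exceptional and singular divisors read $\{z_1\cdots z_k=0\}$, the residue/adjunction formula together with the canonicity of the fibres then shows that the pulled-back density of $\e^{-(p-1)\gm_t}\Om_t\w\overline{\Om_t}$ with respect to $\om_{Y_t}^n$ is bounded by $C''\prod_{i\le k}|z_i|^{2\dt_i}$ for exponents $\dt_i=\dt_i(p)$ which are $>-1$ as soon as $p-1$ is small --- their value at $p=1$ being $\ge0$ precisely because the fibres are canonical --- and such a function is locally integrable.

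The step that requires care --- and the reason the family structure genuinely enters --- is \emph{uniformity in $t$}: resolving the total space $\CX$ does not resolve the individual fibres $X_t$ (think of a family all of whose members carry an ordinary double point), so the exponents $\dt_i$ cannot be read off one resolution and must be controlled for all nearby $t$ at once. This is exactly the delicate point handled in \cite[Lemma~4.4]{DGG2020}: after shrinking $\BD$ the family has, along $Z$, only finitely many local analytic models; the central fibre being canonical forces every nearby $X_t$ to be canonical with discrepancies uniformly bounded below (inversion of adjunction for the $\BQ$-Gorenstein family $\CX\to\BD$, \cite[Theorem~5.50]{Kollar_Mori_1998}, cf. Lemma~\ref{lem:AG_and_fibration}); and combining the resulting finitely many local integrability bounds with the uniformly bounded contribution away from $Z$ yields a single $p>1$ and $C>0$ with $\int_{X_t}\e^{-p\gm_t}\om_t^n<C$ for all $t$ near $0$.
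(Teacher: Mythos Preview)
The paper does not give its own proof of this proposition; it simply invokes \cite[Lemma~4.4]{DGG2020} verbatim. Your proposal is precisely an elaboration of that citation, making explicit that the K\"ahler hypothesis plays no role, and is therefore in line with the paper's approach.
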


On the other hand, if $K \Subset \CX^\reg$, then $\gm \in \CC^0(K)$ and it implies that 
\[
\int_{X_t} \e^{-\gm_t/n} \om_t^n 
\geq \e^{-\frac{\sup_K \gm_t}{n}} \int_{K_t} \om_t^n > c
\]
for some constant $c>0$ independent of $t$ close to $0$.
Therefore the canonical densities satisfy the integral bound (\ref{integral_bound:IB}) in Theorem~\ref{bigthm:uniform_a_priori_estimate}.

\smallskip

We are now ready to establish a uniform control of Chern--Ricci flat potentials in a family of Calabi--Yau varieties satisfying Setting \ref{set:family_CY_var} and the geometric assumption \ref{geoasm:L1_assumption}.

\begin{proof}[Proof of Theorem~\ref{bigthm:CY_family}]
	For all $t \in \BD$, $K_{X_t}$ is trivial and $K_{X_t} = \res{K_{\CX / \BD}}{X_t}$ as well.
	Therefore, one can find a non-vanishing section $\Om_t$ of $K_{X_t}$ satisfying $\Om_t = \res{\Om}{X_t}$.
	According to a recent work of Guedj and Lu \cite[Theorem E]{Guedj_Lu_3_2021}, for each $t \in \BD$, there exists a solution $(\vph_t, c_t) \in \lt( \PSH(X_t, \om_t) \cap L^\infty(X_t) \rt) \times \BR_{>0}$ which solves the complex Monge--Amp\`ere equation
	\[
	(\om_t + \ddc_t \vph_t)^n = c_t \Om_t \w \overline{\Om_t},
	\quad \text{and} \quad \sup_{X_t} \vph_t = 0.
	\]
	According to Theorem~\ref{bigthm:uniform_a_priori_estimate}, Proposition~\ref{bigprop:uniform_L1_isolated_singularities}, and Proposition~\ref{prop:Lp_estimate_of_CY_densities}, there is a uniform constant $C_{\MA}$ such that for all $t \in \BD_{1/2}$, one has
	\[
	c_t + c_t^{-1} + \norm{\vph_t}_{L^\infty} \leq C_{\MA}
	\]
	and this complete the proof of Theorem~\ref{bigthm:CY_family}.
\end{proof}

\bibliographystyle{smfalpha}
\bibliography{bib_Families_DCMAE}

\providecommand{\bysame}{\leavevmode ---\ }
\providecommand{\og}{``}
\providecommand{\fg}{''}
\providecommand{\smfandname}{\&}
\providecommand{\smfedsname}{\'eds.}
\providecommand{\smfedname}{\'ed.}
\providecommand{\smfmastersthesisname}{M\'emoire}
\providecommand{\smfphdthesisname}{Th\`ese}
\begin{thebibliography}{BBGZ13}

\bibitem[Aub78]{Aubin_1978}
{\scshape T.~Aubin} -- {\og \'{E}quations du type {M}onge-{A}mp\`ere sur les
  vari\'{e}t\'{e}s k\"{a}hl\'{e}riennes compactes\fg}, \emph{Bull. Sci. Math.
  (2)} \textbf{102} (1978), no.~1, p.~63--95.

\bibitem[BBGZ13]{BBGZ_2013}
{\scshape R.~J. Berman, S.~Boucksom, V.~Guedj {\normalfont \smfandname}
  A.~Zeriahi} -- {\og A variational approach to complex {M}onge-{A}mp\`ere
  equations\fg}, \emph{Publ. Math. Inst. Hautes \'{E}tudes Sci.} \textbf{117}
  (2013), p.~179--245.

\bibitem[BBJ21]{BBJ_2021}
{\scshape R.~J. Berman, S.~Boucksom {\normalfont \smfandname} M.~Jonsson} --
  {\og A variational approach to the {Y}au-{T}ian-{D}onaldson conjecture\fg},
  \emph{J. Amer. Math. Soc.} \textbf{34} (2021), no.~3, p.~605--652.

\bibitem[BG14]{Berman_Guenancia_2014}
{\scshape R.~J. Berman {\normalfont \smfandname} H.~Guenancia} -- {\og
  K\"{a}hler-{E}instein metrics on stable varieties and log canonical
  pairs\fg}, \emph{Geom. Funct. Anal.} \textbf{24} (2014), no.~6,
  p.~1683--1730.

\bibitem[B{\l}o93]{Blocki_1993}
{\scshape Z.~B{\l}ocki} -- {\og Estimates for the complex {M}onge-{A}mp\`ere
  operator\fg}, \emph{Bull. Polish Acad. Sci. Math.} \textbf{41} (1993), no.~2,
  p.~151--157 (1994).

\bibitem[BT82]{Bedford_Taylor_1982}
{\scshape E.~Bedford {\normalfont \smfandname} B.~A. Taylor} -- {\og A new
  capacity for plurisubharmonic functions\fg}, \emph{Acta Math.} \textbf{149}
  (1982), no.~1-2, p.~1--40.

\bibitem[CDS15]{Chen_Donaldson_Sun_2015}
{\scshape X.~Chen, S.~Donaldson {\normalfont \smfandname} S.~Sun} -- {\og
  K\"{a}hler-{E}instein metrics on {F}ano manifolds. {I}, {II}, {III}\fg},
  \emph{J. Amer. Math. Soc.} \textbf{28} (2015), no.~1, p.~183--197, 199--234,
  235--278.

\bibitem[Chu12]{Chuan_2012}
{\scshape M.-T. Chuan} -- {\og Existence of {H}ermitian-{Y}ang-{M}ills metrics
  under conifold transitions\fg}, \emph{Comm. Anal. Geom.} \textbf{20} (2012),
  no.~4, p.~677--749.

\bibitem[Cle83]{Clemens_1983}
{\scshape C.~H. Clemens} -- {\og Double solids\fg}, \emph{Adv. in Math.}
  \textbf{47} (1983), no.~2, p.~107--230.

\bibitem[CPY21]{Collins_Picard_Yau_2021}
{\scshape T.~C. Collins, S.~Picard {\normalfont \smfandname} S.-T. Yau} -- {\og
  Stability of the tangent bundle through conifold transitions\fg},
  \emph{\href{https://arxiv.org/abs/2102.11170}{arXiv:2102.11170} to appear in
  Comm. Pure Appl. Math.} (2021).

\bibitem[CZ03]{Cegrell_Zeriahi_2003}
{\scshape U.~Cegrell {\normalfont \smfandname} A.~Zeriahi} -- {\og Subextension
  of plurisubharmonic functions with bounded {M}onge-{A}mp\`ere mass\fg},
  \emph{C. R. Math. Acad. Sci. Paris} \textbf{336} (2003), no.~4, p.~305--308.

\bibitem[Dem82]{Demailly_1982}
{\scshape J.-P. Demailly} -- {\og Sur les nombres de {L}elong associ\'{e}s \`a
  l'image directe d'un courant positif ferm\'{e}\fg}, \emph{Ann. Inst. Fourier
  (Grenoble)} \textbf{32} (1982), no.~2, p.~ix, 37--66.

\bibitem[Dem85]{Demailly_1985}
\bysame , {\og Mesures de {M}onge-{A}mp\`ere et caract\'{e}risation
  g\'{e}om\'{e}trique des vari\'{e}t\'{e}s alg\'{e}briques affines\fg},
  \emph{M\'{e}m. Soc. Math. France (N.S.)} (1985), no.~19, p.~124.

\bibitem[DGG20]{DGG2020}
{\scshape E.~Di{\,\,}Nezza, V.~Guedj {\normalfont \smfandname} H.~Guenancia} --
  {\og {Families of singular K{\"a}hler-Einstein metrics}\fg},
  \emph{\href{https://arxiv.org/abs/2003.08178}{arXiv:2003.08178}, to appear in
  J. Eur. Math. Soc.} (2020).

\bibitem[DK12]{Dinew_Kolodziej_2012}
{\scshape S.~Dinew {\normalfont \smfandname} S.~Ko{\l}odziej} -- {\og
  Pluripotential estimates on compact {H}ermitian manifolds\fg}, in
  \emph{Advances in geometric analysis}, Adv. Lect. Math. (ALM), vol.~21, Int.
  Press, Somerville, MA, 2012, p.~69--86.

\bibitem[EGZ09]{EGZ_2009}
{\scshape P.~Eyssidieux, V.~Guedj {\normalfont \smfandname} A.~Zeriahi} -- {\og
  Singular {K}\"{a}hler-{E}instein metrics\fg}, \emph{J. Amer. Math. Soc.}
  \textbf{22} (2009), no.~3, p.~607--639.

\bibitem[FLY12]{Fu_Li_Yau_2012}
{\scshape J.~Fu, J.~Li {\normalfont \smfandname} S.-T. Yau} -- {\og Balanced
  metrics on non-{K}\"{a}hler {C}alabi-{Y}au threefolds\fg}, \emph{J.
  Differential Geom.} \textbf{90} (2012), no.~1, p.~81--129.

\bibitem[FN80]{FN_1980}
{\scshape J.~E. Forn{\ae}ss {\normalfont \smfandname} R.~Narasimhan} -- {\og
  The {L}evi problem on complex spaces with singularities\fg}, \emph{Math.
  Ann.} \textbf{248} (1980), no.~1, p.~47--72.

\bibitem[Fri86]{Friedman_1986}
{\scshape R.~Friedman} -- {\og Simultaneous resolution of threefold double
  points\fg}, \emph{Math. Ann.} \textbf{274} (1986), no.~4, p.~671--689.

\bibitem[Fri91]{Friedman_1991}
\bysame , {\og On threefolds with trivial canonical bundle\fg}, in
  \emph{Complex geometry and {L}ie theory ({S}undance, {UT}, 1989)}, Proc.
  Sympos. Pure Math., vol.~53, Amer. Math. Soc., Providence, RI, 1991,
  p.~103--134.

\bibitem[GGZ20]{Guedj_Guenancia_Zeriahi_2020}
{\scshape V.~Guedj, H.~Guenancia {\normalfont \smfandname} A.~Zeriahi} -- {\og
  Continuity of singular {K}{\"a}hler-{E}instein potentials\fg},
  \emph{\href{https://arxiv.org/abs/2012.02018}{arXiv:2012.02018} to appear in
  Int. Math. Res. Not.} (2020).

\bibitem[GKZ08]{Guedj_Kolodziej_Zeriahi_2008}
{\scshape V.~Guedj, S.~Ko{\l}odziej {\normalfont \smfandname} A.~Zeriahi} --
  {\og H\"{o}lder continuous solutions to {M}onge-{A}mp\'{e}re equations\fg},
  \emph{Bull. Lond. Math. Soc.} \textbf{40} (2008), no.~6, p.~1070--1080.

\bibitem[GL21a]{Guedj_Lu_2_2021}
{\scshape V.~Guedj {\normalfont \smfandname} C.~H. Lu} -- {\og
  Quasi-plurisubharmonic envelopes 2: {B}ounds on {M}onge-{A}mp{\`e}re
  volumes\fg}, \emph{\href{https://arxiv.org/abs/2106.04272}{arXiv:2106.04272}
  to appear in Algebr. Geom.} (2021).

\bibitem[GL21b]{Guedj_Lu_3_2021}
\bysame , {\og Quasi-plurisubharmonic envelopes 3: {S}olving
  {M}onge-{A}mp{\`e}re equations on hermitian manifolds\fg},
  \emph{\href{https://arxiv.org/abs/2107.01938}{arXiv:2107.01938}} (2021).

\bibitem[GR56]{Grauert_Remmert_1956}
{\scshape H.~Grauert {\normalfont \smfandname} R.~Remmert} -- {\og
  Plurisubharmonische {F}unktionen in komplexen {R}\"{a}umen\fg}, \emph{Math.
  Z.} \textbf{65} (1956), p.~175--194.

\bibitem[Gra60]{Grauert_1960}
{\scshape H.~Grauert} -- {\og Ein {T}heorem der analytischen {G}arbentheorie
  und die {M}odulr\"{a}ume komplexer {S}trukturen\fg}, \emph{Inst. Hautes
  \'{E}tudes Sci. Publ. Math.} (1960), no.~5, p.~64.

\bibitem[GZ05]{Guedj_Zeriahi_2005}
{\scshape V.~Guedj {\normalfont \smfandname} A.~Zeriahi} -- {\og Intrinsic
  capacities on compact {K}\"{a}hler manifolds\fg}, \emph{J. Geom. Anal.}
  \textbf{15} (2005), no.~4, p.~607--639.

\bibitem[GZ17]{GZbook}
\bysame , \emph{Degenerate complex {M}onge-{A}mp\`ere equations}, EMS Tracts in
  Mathematics, vol.~26, European Mathematical Society (EMS), Z\"{u}rich, 2017.

\bibitem[KM98]{Kollar_Mori_1998}
{\scshape J.~Koll\'{a}r {\normalfont \smfandname} S.~Mori} -- \emph{Birational
  geometry of algebraic varieties}, Cambridge Tracts in Mathematics, vol. 134,
  Cambridge University Press, Cambridge, 1998, With the collaboration of C. H.
  Clemens and A. Corti, Translated from the 1998 Japanese original.

\bibitem[KN15]{Kolodziej_Nguyen_2015}
{\scshape S.~Ko{\l}odziej {\normalfont \smfandname} N.~C. Nguyen} -- {\og Weak
  solutions to the complex {M}onge-{A}mp\`ere equation on {H}ermitian
  manifolds\fg}, in \emph{Analysis, complex geometry, and mathematical physics:
  in honor of {D}uong {H}. {P}hong}, Contemp. Math., vol. 644, Amer. Math.
  Soc., Providence, RI, 2015, p.~141--158.

\bibitem[Ko{\l}98]{Kolodziej_1998}
{\scshape S.~Ko{\l}odziej} -- {\og The complex {M}onge-{A}mp\`ere equation\fg},
  \emph{Acta Math.} \textbf{180} (1998), no.~1, p.~69--117.

\bibitem[Ko{\l}02]{Kolodziej_2002}
\bysame , {\og Equicontinuity of families of plurisubharmonic functions with
  bounds on their {M}onge-{A}mp\`ere masses\fg}, \emph{Math. Z.} \textbf{240}
  (2002), no.~4, p.~835--847.

\bibitem[Ko{\l}03]{Kolodziej_2003}
\bysame , {\og The {M}onge-{A}mp\`ere equation on compact {K}\"{a}hler
  manifolds\fg}, \emph{Indiana Univ. Math. J.} \textbf{52} (2003), no.~3,
  p.~667--686.

\bibitem[Li22]{Li_2022}
{\scshape C.~Li} -- {\og {$G$}-uniform stability and {K}\"{a}hler-{E}instein
  metrics on {F}ano varieties\fg}, \emph{Invent. Math.} \textbf{227} (2022),
  no.~2, p.~661--744.

\bibitem[LPT21]{Lu_Phung_To_2021}
{\scshape C.~H. Lu, T.-T. Phung {\normalfont \smfandname} T.-D. T\^{o}} -- {\og
  Stability and {H}\"{o}lder regularity of solutions to complex
  {M}onge-{A}mp\`ere equations on compact {H}ermitian manifolds\fg}, \emph{Ann.
  Inst. Fourier (Grenoble)} \textbf{71} (2021), no.~5, p.~2019--2045.

\bibitem[LTW21]{Li_Tian_Wang_2021}
{\scshape C.~Li, G.~Tian {\normalfont \smfandname} F.~Wang} -- {\og On the
  {Y}au-{T}ian-{D}onaldson conjecture for singular {F}ano varieties\fg},
  \emph{Comm. Pure Appl. Math.} \textbf{74} (2021), no.~8, p.~1748--1800.

\bibitem[Ngu16]{Nguyen_2016}
{\scshape N.~C. Nguyen} -- {\og The complex {M}onge-{A}mp\`ere type equation on
  compact {H}ermitian manifolds and applications\fg}, \emph{Adv. Math.}
  \textbf{286} (2016), p.~240--285.

\bibitem[Pan22]{Pan_2022}
{\scshape C.-M. Pan} -- {\og Singular {G}auduchon metrics\fg}, \emph{Compos.
  Math.} \textbf{158} (2022), no.~6, p.~1314--1328.

\bibitem[Rei87]{Reid_1987}
{\scshape M.~Reid} -- {\og The moduli space of {$3$}-folds with {$K=0$} may
  nevertheless be irreducible\fg}, \emph{Math. Ann.} \textbf{278} (1987),
  no.~1-4, p.~329--334.

\bibitem[Ros06]{Rossi_2006}
{\scshape M.~Rossi} -- {\og Geometric transitions\fg}, \emph{J. Geom. Phys.}
  \textbf{56} (2006), no.~9, p.~1940--1983.

\bibitem[RZ11a]{Rong_Zhang_2011}
{\scshape X.~Rong {\normalfont \smfandname} Y.~Zhang} -- {\og Continuity of
  extremal transitions and flops for {C}alabi-{Y}au manifolds\fg}, \emph{J.
  Differential Geom.} \textbf{89} (2011), no.~2, p.~233--269, Appendix B by
  Mark Gross.

\bibitem[RZ11b]{Ruan_Zhang_2011}
{\scshape W.-D. Ruan {\normalfont \smfandname} Y.~Zhang} -- {\og Convergence of
  {C}alabi-{Y}au manifolds\fg}, \emph{Adv. Math.} \textbf{228} (2011), no.~3,
  p.~1543--1589.

\bibitem[Tia92]{Tian_1992}
{\scshape G.~Tian} -- {\og Smoothing {$3$}-folds with trivial canonical bundle
  and ordinary double points\fg}, in \emph{Essays on mirror manifolds}, Int.
  Press, Hong Kong, 1992, p.~458--479.

\bibitem[Tsu88]{Tsuji_1988}
{\scshape H.~Tsuji} -- {\og Existence and degeneration of
  {K}\"{a}hler-{E}instein metrics on minimal algebraic varieties of general
  type\fg}, \emph{Math. Ann.} \textbf{281} (1988), no.~1, p.~123--133.

\bibitem[TW10]{Tosatti_Weinkove_2010}
{\scshape V.~Tosatti {\normalfont \smfandname} B.~Weinkove} -- {\og The complex
  {M}onge-{A}mp\`ere equation on compact {H}ermitian manifolds\fg}, \emph{J.
  Amer. Math. Soc.} \textbf{23} (2010), no.~4, p.~1187--1195.

\bibitem[Yau78]{Yau_1978}
{\scshape S.-T. Yau} -- {\og On the {R}icci curvature of a compact {K}\"{a}hler
  manifold and the complex {M}onge-{A}mp\`ere equation. {I}\fg}, \emph{Comm.
  Pure Appl. Math.} \textbf{31} (1978), no.~3, p.~339--411.

\bibitem[Zer01]{Zeriahi_2001}
{\scshape A.~Zeriahi} -- {\og Volume and capacity of sublevel sets of a
  {L}elong class of plurisubharmonic functions\fg}, \emph{Indiana Univ. Math.
  J.} \textbf{50} (2001), no.~1, p.~671--703.

\end{thebibliography}
\end{document}